\DeclareSymbolFont{cyrletters}{OT2}{wncyr}{m}{n}
\DeclareMathSymbol{\Sha}{\mathalpha}{cyrletters}{"58}
\newtheorem{theorem}{Theorem}[section]
\newtheorem{lemma}[theorem]{Lemma}
\newtheorem{proposition}[theorem]{Proposition}
\newtheorem{corollary}[theorem]{Corollary}
\newtheorem{conjecture}[theorem]{Conjecture}
\theoremstyle{definition}
\newtheorem*{ack}{Acknowledgements}
\newtheorem*{con}{Conventions}
\newtheorem{remark}[theorem]{Remark}
\newtheorem{definition}[theorem]{Definition}
\numberwithin{equation}{section} \numberwithin{figure}{section}
\DeclareMathOperator{\Pic}{Pic} 
\DeclareMathOperator{\Gal}{Gal} 
\DeclareMathOperator{\Aut}{Aut} 
\DeclareMathOperator{\Spec}{Spec}
\DeclareMathOperator{\an}{an}
\DeclareMathOperator{\rank}{rank}
\DeclareMathOperator{\prim}{prim}
\DeclareMathOperator{\Hom}{Hom} 
\DeclareMathOperator{\im}{Im}
\DeclareMathOperator{\Br}{Br}
\DeclareMathOperator{\Isom}{Isom} \DeclareMathOperator{\Inn}{Inn}
\DeclareMathOperator{\Hilb}{Hilb}
\DeclareMathOperator{\Lin}{Lin}
\newcommand{\PGL}{\textrm{PGL}}
\newcommand{\PO}{\textrm{PO}}
\newcommand\PP{\mathbb{P}}
\newcommand\ZZ{\mathbb{Z}}
\newcommand\NN{\mathbb{N}}
\newcommand\QQ{\mathbb{Q}}
\newcommand\RR{\mathbb{R}}
\newcommand\CC{\mathbb{C}}
\newcommand\OO{\mathcal{O}}
\title[Complete intersections]{Complete intersections: Moduli, Torelli, and good reduction}
\author{A. Javanpeykar}
\address{A. Javanpeykar \\
Institut f\"{u}r Mathematik\\
Johannes Gutenberg-Universit\"{a}t Mainz\\
Staudingerweg 9, 55099 Mainz\\
Germany.}
\email{peykar@uni-mainz.de}
\author{D. Loughran}
\address{D. Loughran \\
Leibniz Universit\"{a}t Hannover,
Institut f\"{u}r Algebra, Zahlentheorie
    und Diskrete Mathematik\\
Welfengarten 1\\
30167 Hannover\\
Germany.}
\email{loughran@math.uni-hannover.de}
\subjclass[2010]
{11G35  %Varieties over global fields
(14M10, %Complete intersections
14K30,  %Picard schemes, higher Jacobians 
14J50,  %Automorphisms of surfaces and higher-dimensional varieties
14C34,  %Torelli problem 
14D23)}  %Stacks and moduli problems
\begin{document}
 
\begin{abstract}
	We study the arithmetic of complete intersections in projective space over number fields.
	Our main results include arithmetic Torelli theorems and versions of the Shafarevich conjecture, as proved
	for curves and abelian varieties by Faltings.
	For example, we prove an analogue of the Shafarevich conjecture for cubic and quartic threefolds
	and intersections of two quadrics.
\end{abstract}

\maketitle
\tableofcontents

\thispagestyle{empty}

\section{Introduction}
An important finiteness statement in algebraic number theory is the theorem
of Hermite and Minkowski: there are only finitely many number fields
of bounded degree over $\QQ$ which are unramified outside of a given finite set $S$ of finite places of $\QQ$.
Shafarevich was the first to notice that such finiteness statements occur elsewhere, and at the 1962 ICM in Stockholm \cite{Shaf1962} he formulated one of his most famous conjectures: given $g \geq 2$
and a finite set $S$ of finite places of a number field $K$, the set of $K$-isomorphism
classes of smooth projective curves of genus $g$ over $K$ with good reduction outside of $S$
is finite. Faltings proved this conjecture in his paper on Mordell's conjecture \cite{Faltings2},
and also proved the analogous finiteness statement for abelian varieties.

It is natural to ask for which other classes of varieties such finiteness statements hold.
Analogous results have been proven in the following cases:
\begin{itemize}
	\item Polarised K$3$ surfaces of bounded degree and cubic fourfolds \cite{Andre}.
	\item Del Pezzo surfaces \cite{Scholl}.
	\item Flag varieties \cite{JL2}.
	\item Certain surfaces of general type \cite{Jav15}.
\end{itemize}
Part of the aim of this paper is to illustrate that such statements should be rife in arithmetic geometry. 
In particular, we prove an analogous finiteness result for certain complete intersections in projective space.

\begin{theorem}\label{theorem: main theorem}
Let $K$ be a number field, let $S$ be a finite set of finite places of $K$, let $n \geq 1$ and let $b\geq 0$. Then the set of $K$-linear isomorphism classes of $n$-dimensional complete intersections over $K$
with Hodge level at most $1$, whose $n$th Betti number equals $b$ and with good reduction outside $S$, is finite.
\end{theorem}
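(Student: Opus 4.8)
The plan is to follow Faltings' strategy for Shafarevich-type finiteness: attach to each complete intersection an abelian variety, transfer good reduction to it, apply Faltings' theorem, and recover the complete intersection up to finite ambiguity by Torelli and a count of twists. I first reduce to a fixed numerical type. The $n$th Betti number of a smooth $n$-dimensional complete intersection is an explicit function of $n$ and the multidegree $(d_1,\dots,d_c)$ which, after discarding degree-one entries (they do not change the variety), grows with the $d_i$ and with the codimension; hence for fixed $n$ and $b$ only finitely many multidegrees occur, and it suffices to treat one at a time. For a fixed type the relevant varieties form a separated Deligne--Mumford stack $M$ of finite type over $K$ whose $K$-points are the $K$-linear isomorphism classes. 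The Hodge level constraint behaves differently according to the parity of $n$: for odd $n$ the primitive cohomology $H^n_{\prim}$ is of abelian-variety type, whereas for even $n=2m$ the condition forces $H^n_{\prim}$ to be of pure Tate type $(m,m)$.

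Assume first that $n$ is odd. Using the arithmetic Torelli input of this paper I attach to $X$ a polarised abelian variety $J(X)$ over $K$ of dimension $g=b/2$, together with, for each prime $\ell$, a $\Gal(\overline{K}/K)$-equivariant isomorphism between $H^1_{\et}(J(X)_{\overline K},\QQ_\ell)$ and $H^n_{\et}(X_{\overline K},\QQ_\ell)_{\prim}$ up to a Tate twist. If $X$ has good reduction outside $S$, then smooth proper base change shows the latter is unramified outside $S$ and $\ell$; transporting this across the isomorphism and applying the N\'eron--Ogg--Shafarevich criterion, $J(X)$ has good reduction outside $S$. Since $g$ is bounded, Faltings' theorem \cite{Faltings2} leaves only finitely many possibilities for $J(X)$ up to isomorphism.

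It remains to bound, for a fixed abelian variety $J$, the $X\in M(K)$ with $J(X)\cong J$ and good reduction outside $S$. Over $\overline{K}$ this is the assertion that the period morphism $M\to\mathcal{A}_g$ has finite fibres, which I would obtain from the infinitesimal Torelli theorem for complete intersections: where it holds the period morphism is unramified, hence quasi-finite on the finite-type $M$, so has finite fibres; the short list of numerical types where infinitesimal Torelli fails (among them curves and the cubic threefold, where classical and Clemens--Griffiths Torelli apply) is handled directly. To descend to $K$-linear classes I count forms: the $K$-linear forms of a fixed $X$ are classified by $H^1(K,\Aut_{\mathrm{lin}}(X_{\overline{K}}))$, those with good reduction outside $S$ are unramified outside $S$, and such restricted Galois cohomology sets are finite by the arguments used for flag varieties in \cite{JL2} (Hermite--Minkowski when $\Aut_{\mathrm{lin}}$ is finite). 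When $n$ is even the cohomology is of Tate type, the period morphism is trivial, and the varieties in play are low-degree rational Fano complete intersections (quadrics, intersections of two quadrics, del Pezzo surfaces, and the like); for these finiteness follows from a separate direct count -- quadrics via finiteness of quadratic forms with prescribed ramification, del Pezzo surfaces via \cite{Scholl} -- together with the form-counting above.

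The principal obstacle is the Torelli step: securing finiteness of the fibres of the period morphism for every odd level-one numerical type, which forces one to marshal the infinitesimal Torelli theorem for complete intersections and to dispose of its exceptional cases. A second, genuinely arithmetic, difficulty is ensuring that $J(X)$ and the cohomological comparison are defined over $K$ and not merely over $\overline{K}$ or analytically, since only a $K$-rational construction lets good reduction of $X$ descend to good reduction of $J(X)$; this is where the arithmetic refinement of Torelli, rather than its classical complex-analytic form, is essential.
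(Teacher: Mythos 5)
Your reduction to finitely many types and your architecture for the odd-dimensional cases are viable: for Hodge level $1$ your combination of the $\ell$-adic comparison plus N\'eron--Ogg--Shafarevich (as in Lemma \ref{lem: int Jac has GR}), Faltings's theorem, geometric finiteness of Torelli fibres, and twist-counting is precisely the variant recorded in the remark following Proposition \ref{prop: HL1}; since you descend by counting twists rather than by a $K$-rational Torelli statement, this route even covers $(2,2;n)$ with $n$ odd, where the arithmetic Torelli theorem genuinely fails (Proposition \ref{prop:examples_with_same_int_Jac}). The serious gap is in your even-dimensional case: you give no argument for even-dimensional intersections of two quadrics of dimension at least $4$. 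These occur in the theorem (type $(2,2;n)$, $n$ even, $b_n=n+4$, Hodge level $0$), they have positive-dimensional moduli over $\overline{K}$, their middle cohomology is of Tate type so no abelian variety is available, they are not quadrics, and they are del Pezzo surfaces only when $n=2$; hence none of your listed tools (Scholl \cite{Scholl}, quadratic-form counting, or form-counting over a fixed $\overline{K}$-class) applies. Twist-counting alone cannot suffice here, because one must first bound the set of $\overline{K}$-isomorphism classes admitting good reduction, and the idea the paper uses for this is entirely absent from your proposal: the discriminant of the associated pencil of quadrics is a finite \'etale subscheme of $\PP^1$ of degree $n+3$ with good reduction, which, after a Hermite--Minkowski splitting step (Theorem \ref{thm:HM}), is controlled by the finiteness of solutions of $S$-unit equations (Theorem \ref{thm:units}); see Proposition \ref{prop:intersection of two quadrics}.

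A second, smaller, gap: your geometric finiteness step ``infinitesimal Torelli implies the period morphism is unramified, hence quasi-finite'' is incorrect as stated for the morphism of stacks $\mathcal{C}_T \to \mathcal{A}_{g,1}$. Unramifiedness of a stack morphism requires, besides injectivity on tangent spaces, injectivity of the induced maps on automorphism groups $\Lin X \to \Aut J(X)$, i.e.\ representability; this is a substantial geometric input (Proposition \ref{prop:faithful}, resting on Beauville's work on nets of quadrics, among other things), and it actually \emph{fails} for $(2,2;n)$ with $n$ odd (Proposition \ref{prop:two_quadrics}, Corollary \ref{cor:not_rep}) --- a type your odd-dimensional argument must treat, and for which Flenner's infinitesimal Torelli theorem nevertheless holds. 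The conclusion you want (finitely many isomorphism classes in each geometric fibre) remains true, but to obtain it you must either run the tangent-space argument on a finite-type \'etale atlas $U \to \mathcal{C}_T$, where only injectivity on tangent spaces is needed, or invoke the analytic argument behind Theorem \ref{thm:qf_torelli} (Griffiths's extension of the period map and Stein factorisation). With that repair, and with the missing two-quadrics argument supplied, your proof would be complete.
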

Here by \emph{good reduction}, we mean good reduction as a complete intersection; see Definition \ref{def:good_ceduction_CI}.
A \emph{linear} isomorphism is an isomorphism which is induced by an automorphism of the ambient projective space.
The \emph{Hodge level} is a certain element of $\ZZ \cup \{-\infty\}$ that one associates to a smooth complete intersection,
defined in terms of its Hodge structure, which gives a rough measure of its geometrical complexity.
The complete intersections of Hodge level at most $1$ have been completely classified,
and this classification, together with relevant definitions, can be found in Section \ref{sec: Hodge theory}.
Note that the classification implies that if $n > 1$, then there are only finitely many
choices for the $n$th Betti number, in particular, the assumption on the Betti number in Theorem \ref{theorem: main theorem}
is only required when $n=1$.
Interesting new examples to which Theorem \ref{theorem: main theorem} applies include cubic and quartic threefolds 
and intersections of two quadrics. 
Many  of our results generalise from number fields to finitely generated fields of characteristic zero; 
see Section \ref{Sec:Shaf} for our most general results.

Let us explain the ideas behind the proof of Theorem \ref{theorem: main theorem}. The proof proceeds by handling various cases in turn, ordered by their Hodge level. The complete intersections of Hodge level $-\infty$
are quadric hypersurfaces. In particular, the result here follows from our general result on flag varieties
\cite{JL2}; the key finiteness property required being the finiteness of the Tate-Shafarevich set of a linear algebraic group over $K$. 

In Hodge level $0$ the key new case is that of intersections of two quadrics (Scholl's result \cite{Scholl} 
already handles the case of cubic surfaces). Here we use the associated pencil
of quadrics to reduce the problem to the finiteness of the set
of solutions of $S$-unit equations.  The proofs of the Shafarevich conjecture for 
elliptic curves (see \cite{Parshin} and \cite[Thm.~6.1]{Silverman}), cyclic curves \cite{dJRe,JvK},
and cubic surfaces \cite{Scholl} also reduce to such finiteness statements.

The case of Hodge level $1$ is the deepest.
Here we use the theory of the \emph{intermediate Jacobian}, which is a higher-dimensional analogue of the Jacobian of a curve. This is usually 
constructed complex analytically, however Deligne \cite{Del72} has shown in our case how to make this theory work over any field of characteristic $0$, which is of course crucial for arithmetic applications. 
Faltings used his finiteness result for abelian varieties to deduce the Shafarevich conjecture for curves via a Torelli theorem (namely, the fact that any curve of genus $g \geq 2$ over $K$
is uniquely determined up to $K$-isomorphism by its Jacobian, see e.g.~\cite[Cor.~VII.12.2]{CornellSilverman}).
We follow a similar strategy, and use Faltings's theorem to reduce to showing the following ``arithmetic Torelli theorem''.
\begin{theorem} \label{thm:arithmetic_torelli}
	Let $K$ be a field of characteristic $0$ and 
	let $A$ be a principally polarised abelian variety over $K$.
	Then the set of $K$-linear isomorphism classes of smooth complete intersections $X$ of Hodge level $1$ over $K$,
	which are not intersections of two quadrics nor curves of genus $1$, and whose intermediate Jacobian is
	isomorphic to $A$ as a principally polarised abelian variety, is finite.
\end{theorem}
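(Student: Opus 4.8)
The plan is to fix the discrete invariants of $X$, to interpret the intermediate Jacobian as an algebraic morphism of moduli stacks, and then to show this morphism is representable and unramified, so that its fibre over $A$ is a finite \'etale $K$-scheme. First I would reduce to a single numerical type. Set $g=\dim A$. For a smooth complete intersection $X$ of Hodge level $1$ and dimension $n$ the intermediate Jacobian has dimension $\tfrac12\dim H^{n}(X)_{\mathrm{prim}}$, a quantity depending only on $n$ and on the multidegree $\mathbf{d}=(d_1,\dots,d_c)$. By the classification of complete intersections of Hodge level $1$ in Section~\ref{sec: Hodge theory}, once the intersections of two quadrics and the genus-$1$ curves are removed, only finitely many types $(n,\mathbf{d})$ satisfy $\dim\Jac=g$ (the surviving families include cubic and quartic threefolds and complete intersection curves of genus $\geq 2$). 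It therefore suffices to fix the type and bound the number of $X$ of that type with $\Jac(X)\cong A$.

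Next I would package the problem stack-theoretically. Let $\mathcal{M}$ be the moduli stack of smooth complete intersections of the fixed type up to linear isomorphism, an algebraic stack of finite type over $\QQ$ obtained as the quotient by $\PGL$ of an open subscheme of the relevant Hilbert scheme. Since Deligne's construction of the intermediate Jacobian \cite{Del72} is functorial in families and valid in characteristic $0$, it defines a morphism $j\colon\mathcal{M}\to\mathcal{A}_g$ to the moduli stack of $g$-dimensional principally polarised abelian varieties, all defined over $\QQ$. A $K$-point of the fibre $\mathcal{M}\times_{\mathcal{A}_g,[A]}\Spec K$ is a pair consisting of an $X$ over $K$ of the fixed type together with an isomorphism $\Jac(X)\cong A$ of principally polarised abelian varieties, and forgetting this isomorphism surjects onto the set we wish to bound; so it is enough to prove that this fibre has only finitely many $K$-points.

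The two geometric inputs I would invoke, for each surviving family, are the following Torelli-type statements. Infinitesimal Torelli, the injectivity of the differential of the period map, shows that $j$ is unramified; this applies to the surviving families by the infinitesimal Torelli theorem of Flenner together with the classical theory of curves. Secondly, the injectivity of $\Aut^{\mathrm{lin}}(X_{\kbar})\to\Aut(\Jac(X_{\kbar}),\Theta)$ for every $X$ is exactly the condition that $j$ be representable, and is classical Torelli for curves and the Clemens--Griffiths reconstruction for cubic threefolds. Granting both, $j$ is a representable, unramified morphism of finite type, so its fibre over the $K$-point $[A]$ is an algebraic space that is unramified and of finite type over $K$, hence a finite disjoint union of spectra of finite separable extensions of $K$; in particular it has only finitely many $K$-points, and summing over the finitely many types gives the theorem. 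The hypotheses are what secure these inputs: for genus-$1$ curves the Weil--Ch\^{a}telet group $H^1(K,\Jac(X))$ is infinite and classifies infinitely many forms with the same Jacobian, so the conclusion genuinely fails there, while the intersections of two quadrics require separate treatment and are handled instead by the unit-equation argument.

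The main obstacle is establishing these two Torelli inputs uniformly across the surviving families, and I expect the quartic threefolds to be the delicate case: verifying that the period map has injective differential and that a linear automorphism acting trivially on the polarised intermediate Jacobian must itself be trivial are precisely the properties whose failure forces the exclusions above, and proving them in the remaining higher-dimensional and higher-codimension cases is where the real work lies.
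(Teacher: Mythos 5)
Your skeleton --- fix the type using the classification, view the intermediate Jacobian as a morphism $J\colon \mathcal{C}_{T,\QQ}\to\mathcal{A}_{g(T),1,\QQ}$ of stacks, prove it representable and unramified, and conclude that the fibre over $[A]$ has finitely many $K$-points --- is exactly the paper's (Proposition \ref{prop:12}), and your criterion for representability (injectivity of $\Lin X\to \Aut J(X)$ on geometric points) is the right one. The genuine gap is that you justify this injectivity only for curves of genus $\geq 2$ and for cubic threefolds, and you explicitly defer it for the remaining types as ``where the real work lies''. But those remaining types --- quartic threefolds $(4;3)$, cubic fivefolds $(3;5)$, intersections of a quadric and a cubic $(2,3;3)$, and odd-dimensional intersections of three quadrics $(2,2,2;n)$ --- are the substance of the theorem, and for them no global Torelli theorem is known even over $\CC$, so no reconstruction argument in the style of Clemens--Griffiths is available. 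The missing idea, which is the core of the paper's argument, is that faithfulness does not require global Torelli: in characteristic $0$ the inertia group at any point of a smooth separated Deligne--Mumford stack with trivial generic inertia acts faithfully on the tangent space at that point (Lemma \ref{lem:hartmann}); combined with Flenner's infinitesimal Torelli theorem, this shows that if a \emph{single} smooth complete intersection of type $T$ over $\CC$ has trivial linear automorphism group, then $\Lin X\to\Aut \mathrm{H}^n(X,\CC)$ is injective for \emph{every} $X$ of type $T$ (Proposition \ref{prop:crit_for_rep}). The existence of an automorphism-free member is then verified type by type (Lemma \ref{lem:d1d2}), with separate Beauville-style arguments for $(2,2,2;n)$ (Lemma \ref{lem:222n}, via the discriminant curve of the net of quadrics) and for $(2,3;3)$ (Lemma \ref{lem:23n}, via a character computation on the tangent space of $J(X)$). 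Without this block, i.e.\ Proposition \ref{prop:faithful}, your argument proves the theorem only for curves and cubic threefolds.

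Two further remarks. First, where your route does diverge from the paper, it is actually sound and leaner: the paper obtains quasi-finiteness of $J$ from a complex-analytic quasi-finite Torelli theorem (Theorem \ref{thm:qf_torelli}, proved via period maps, Griffiths' extension theorem and Stein factorisation, and used through Corollary \ref{cor:Jacobian torelli}), whereas you get quasi-finiteness for free from unramified plus finite type. To make this complete you must also invoke separatedness of $J$ (Lemma \ref{lem:separatedness_of_J}, resting on Benoist's separatedness of $\mathcal{C}_T$), so that the fibre over $[A]$ is a separated, locally quasi-finite algebraic space over $K$, hence a scheme by Knutson's criterion; only then is ``finite disjoint union of spectra of finite separable extensions of $K$'' justified. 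Second, your closing worry that the injective differential also needs to be established case by case is unfounded: Flenner's theorem applies uniformly to all types of Hodge level $1$ other than $(3;1)$ and $(2,2;n)$, so that half of the ``real work'' is already done in the literature; the half that is not is precisely the automorphism faithfulness described above.
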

Note that our result is slightly weaker than the known Torelli theorem for curves.
This is due to the fact that one does not even know a full global Torelli over $\CC$
in all the cases of interest (e.g.~this is unknown for quartic threefolds over $\CC$).
Theorem \ref{thm:arithmetic_torelli} is however sufficient for the proof of Theorem~\ref{theorem: main theorem}. The restriction to avoid intersections of two quadrics and curves of genus $1$ is genuinely required; we show that the analogous statement  is false in these cases in Section \ref{section:intnsof2}.

The proof of Theorem \ref{thm:arithmetic_torelli} requires numerous inputs from geometry.
The key ones being an infinitesimal Torelli theorem over $\CC$ due to Flenner \cite{Fl86}, together with 
the fact that the automorphism group of such a smooth complete intersection acts faithfully on its cohomology (see Section \ref{sec:automorphisms_faithful}).   We in fact
show stronger results than stated here, namely Theorem \ref{thm:arithmetic_torelli} is proved
by showing that the intermediate Jacobian
gives rise to a separated representable quasi-finite morphism of stacks; see Section \ref{Sec:Arithmetic}
for our complete results.

One knows full global Torelli theorems over $\CC$ for cubic threefolds \cite{CG72}
and odd-dimensional intersections of three quadrics \cite[Cor.~4.5]{Deb89}.
We are able to extend these to any field of characteristic $0$.

\begin{theorem} \label{thm:global_arithmetic_torelli}
	Let $K$ be a field of characteristic $0$ and 
	let $X_1$ and $X_2$ be either smooth cubic threefolds or smooth odd-dimensional 
	complete intersections of three quadrics over $K$. 
	
	If $J(X_1) \cong J(X_2)$ as principally
	polarised abelian varieties, then $X_1 \cong X_2$.
\end{theorem}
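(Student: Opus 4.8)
The plan is to deduce the statement over an arbitrary field $K$ of characteristic $0$ from the known global Torelli theorems over $\CC$ (\cite{CG72} for cubic threefolds, \cite[Cor.~4.5]{Deb89} for odd-dimensional intersections of three quadrics) in two stages: first over the algebraic closure $\overline{K}$ by a specialisation argument, and then over $K$ by Galois descent. Throughout I would use that Deligne's algebraic intermediate Jacobian \cite{Del72} is compatible with base change and recovers the classical analytic construction after any embedding into $\CC$, and that $\Aut(X)$ acts faithfully on the cohomology of $X$, hence on $J(X)$, for the varieties in question (Section~\ref{sec:automorphisms_faithful}). Since these $X$ are Fano with Picard group generated by the hyperplane class, every abstract isomorphism between two of them is linear, so it suffices to produce a linear isomorphism over $K$. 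The central object is the $K$-scheme $I = \Isom(X_1,X_2)$ of linear isomorphisms, of finite type over $K$, together with the morphism
$$ J_* \colon I \longrightarrow \Isom^{\mathrm{pp}}(J(X_1), J(X_2)), \qquad f \longmapsto J(f), $$
to the scheme of isomorphisms of principally polarised abelian varieties. The given isomorphism $\phi$ furnishes a $K$-point of the target, and any preimage of $\phi$ under $J_*$ over $K$ is the desired $K$-isomorphism $X_1 \cong X_2$.

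First I would show that $J_*$ is a bijection on $\overline{K}$-points. Injectivity is immediate from faithfulness: if $J(f_1) = J(f_2)$, then $f_2^{-1}f_1$ acts trivially on $H^3$, hence on all cohomology, hence equals the identity. For surjectivity, fix a ppav isomorphism $\psi$ over $\overline{K}$; all the data $X_1, X_2, \psi$ are defined over some finitely generated subfield $k_0 \subseteq \overline{K}$, which I embed into $\CC$. Over $\CC$ the strong form of the global Torelli theorem---that an isomorphism of polarised intermediate Jacobians is \emph{induced} by an isomorphism of the varieties, coming from the reconstruction of the variety from the singular locus of the theta divisor in \cite{CG72} and \cite{Deb89}---produces $f_\CC$ with $J(f_\CC) = \psi_\CC$. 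Thus the fibre of $J_*$ over $\psi \in \Isom^{\mathrm{pp}}(J(X_1),J(X_2))(k_0)$ is a finite-type $k_0$-scheme with a $\CC$-point; it is therefore nonempty, hence has a point over $\overline{k_0} \subseteq \overline{K}$, giving the required preimage of $\psi$.

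Next I would descend from $\overline{K}$ to $K$. The resulting bijection $J_*\colon I(\overline{K}) \xrightarrow{\sim} \Isom^{\mathrm{pp}}(J(X_1),J(X_2))(\overline{K})$ is equivariant for the action of $\Gal(\overline{K}/K)$, since $J$ is functorial and defined over $K$. The point $\phi \in \Isom^{\mathrm{pp}}(J(X_1),J(X_2))(K)$ is Galois-invariant, so its \emph{unique} preimage $f \in I(\overline{K})$ is Galois-invariant as well. As $I$ is of finite type over $K$ one has $I(K) = I(\overline{K})^{\Gal(\overline{K}/K)}$, whence $f \in I(K)$; that is, $f$ is a $K$-linear isomorphism $X_1 \cong X_2$.

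The main obstacle is the surjectivity input, namely the \emph{strong} global Torelli theorem over $\CC$: one needs not merely that isomorphic intermediate Jacobians force the varieties to be isomorphic, but that a given isomorphism of principally polarised abelian varieties is realised by an isomorphism of varieties. Extracting this from \cite{CG72} and \cite{Deb89} requires checking that their reconstruction of $X$ from $(J(X),\Theta)$ is sufficiently canonical to transform a ppav isomorphism into a geometric one. Once this functoriality is in hand, the passage to $\overline{K}$ via the Nullstellensatz and the subsequent Galois descent are formal, the latter relying crucially on the faithfulness of the automorphism action to guarantee uniqueness of the preimage.
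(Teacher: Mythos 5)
Your injectivity step (via Proposition \ref{prop:faithful} and diagram \eqref{diag:J(X)}) and the formal descent mechanics are fine, but the surjectivity input is a genuine gap, not a detail to be ``extracted'' from \cite{CG72} and \cite{Deb89}: the strong statement you need --- that every isomorphism of principally polarised abelian varieties $\psi\colon J(X_1)\to J(X_2)$ over $\CC$ equals $J(f)$ for some isomorphism $f\colon X_1\to X_2$ --- is false. Take $X_1=X_2=X$ a cubic threefold with trivial automorphism group (the generic one, by \cite{MM63}) and $\psi=-\mathrm{id}_{J(X)}$: negation is an automorphism of \emph{every} principally polarised abelian variety, yet the only candidate lift is $\mathrm{id}_X$ and $J(\mathrm{id}_X)\neq -\mathrm{id}$. (The same counterexample works for $(2,2,2;n)$ via Lemma \ref{lem:222n}.) The canonical reconstruction of \cite{CG72} and \cite{Deb89} does not rescue this: it produces a \emph{retraction} $R$ with $R\circ J_*=\mathrm{id}$, not a section of $J_*$. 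Indeed $-\mathrm{id}$ preserves the symmetric theta divisor, fixes its unique singular point, and acts as $-1$ on the tangent space there, hence \emph{trivially} on the projectivised tangent cone, which is the reconstructed $X$; so $R(-\mathrm{id})=\mathrm{id}_X$ and $J_*\circ R\neq\mathrm{id}$. Consequently $J_*\colon I(\overline{K})\to\Isom^{\mathrm{pp}}(J(X_1),J(X_2))(\overline{K})$ is injective but not surjective, your given $\phi\in\Isom^{\mathrm{pp}}(J(X_1),J(X_2))(K)$ may have no preimage at all, and the ``unique preimage of $\phi$ is Galois-invariant'' descent collapses.

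What actually makes the theorem work over $K$ is not liftability of $\phi$ but the functoriality and Galois-equivariance of the reconstruction itself: $R$ is built from algebro-geometric operations (singular locus of a theta divisor, projectivised tangent cone) which commute with $\Gal(\overline{K}/K)$, so one sets $f:=R(\phi_{\overline{K}})$, observes $\sigma(f)=R(\sigma(\phi_{\overline{K}}))=f$, and descends; no uniqueness or surjectivity statement is needed, and injectivity of $J_*$ plays no role. I would add, in fairness to you, that your proposal is essentially the de-stackified version of the paper's own argument: the paper deduces the theorem from Proposition \ref{prop:univ_inj} (universal injectivity of $J\colon\mathcal{C}_{T,\QQ}\to\mathcal{A}_{g(T),1,\QQ}$), and a non-empty geometric fibre of that morphism being a singleton is \emph{equivalent} to weak Torelli together with surjectivity of $\Lin X\to\Aut(J(X),\Theta)$ --- precisely the property that $-\mathrm{id}$ violates. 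So the paper's citation of the weak Torelli theorems over $\CC$ at that step elides the same distinction you flagged as ``the main obstacle''; your instinct that this is where the difficulty lies is correct, but as written neither your bijectivity claim nor a proof resting on it can stand, and the repair must go through the equivariant reconstruction rather than through lifting of polarised isomorphisms.
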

We emphasise that this does not follow formally from the Torelli theorem over $\CC$;
indeed, one knows a global Torelli theorem over $\CC$ for odd-dimensional intersections of two quadrics \cite[Cor.~3.4]{Don80},
yet these do not satisfy a global Torelli theorem over every field of characteristic $0$, as we show in Section \ref{section:intnsof2}.

The authors believe that Theorem \ref{theorem: main theorem} 
should be a special case of a more general finiteness statement.
\begin{conjecture}[Shafarevich conjecture for complete intersections]\label{conj} 
Let $K$ be a number field, let $S$ be a finite set of finite places of $K$
and let $T$ be a type. Then the set of $K$-linear isomorphism classes of smooth complete intersections of type $T$ over $K$
 with good reduction outside $S$ is finite.
\end{conjecture}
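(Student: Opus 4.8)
The plan is to reduce Conjecture \ref{conj} to two essentially independent inputs, exactly as in the proof of Theorem \ref{theorem: main theorem}: an \emph{arithmetic} finiteness statement for the cohomological invariants attached to the varieties, and a \emph{geometric} Torelli-type statement recovering the variety from those invariants. First I would package the smooth complete intersections of a fixed type $T$ into a moduli stack $\mathcal{M}_T$ over $\ZZ$, realised as the stack-theoretic quotient of the open locus of smooth complete intersections inside the relevant multiprojective space of defining forms by the action of $\PGL$. The notion of good reduction outside $S$ should coincide with the extension of a $K$-point to an $\OO_{K,S}$-point of $\mathcal{M}_T$, so that the conjecture becomes the assertion that $\mathcal{M}_T(\OO_{K,S})$ is finite up to linear isomorphism.

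Given this, the natural strategy is to compose with a period (or cohomology) map. A complete intersection of type $T$ with good reduction outside $S$ has primitive middle cohomology giving, for each prime $\ell$, an $\ell$-adic Galois representation unramified outside $S\cup\{\ell\}$ with Hodge--Tate weights concentrated in the range fixed by $T$. If one can prove that only finitely many isomorphism classes of such motives arise, and that each fibre of the cohomology map is finite, the conjecture follows. The finiteness of the fibres is the robust part: Flenner's infinitesimal Torelli theorem \cite{Fl86} applies to all but finitely many types, and the faithful action of the automorphism group on cohomology (Section \ref{sec:automorphisms_faithful}) upgrades this to a quasi-finite, separated period morphism of stacks, exactly the mechanism behind Theorem \ref{thm:arithmetic_torelli}. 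One would then treat separately the finitely many exceptional types where infinitesimal Torelli fails, and the degenerate cases (intersections of two quadrics, and low-dimensional cases), by \emph{ad hoc} arguments as in the present paper.

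The main obstacle is the arithmetic finiteness of the cohomological invariants. When $T$ has Hodge level at most $1$ the relevant Hodge structure is of abelian type, so the invariant is essentially a principally polarised abelian variety and Faltings's finiteness theorem \cite{Faltings2} applies; this is precisely what makes Theorem \ref{theorem: main theorem} accessible. For a general type the middle cohomology is a pure motive that is not of abelian type, and no unconditional Faltings-style finiteness is known for the associated $\ell$-adic representations, so proving it outright would amount to a motivic Shafarevich statement well beyond current technology. I therefore expect the only viable route to be the $p$-adic period-mapping method of Lawrence and Venkatesh, which circumvents the need for a Faltings input by studying the image of the $p$-adic period map on the universal family over $\mathcal{M}_T$ and exploiting the semisimplicity of Frobenius together with bounds on the Galois cohomology of the relevant crystalline representations. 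The crux then becomes a purely geometric monodromy computation: one must show that the global monodromy of the universal family of complete intersections of type $T$ is large enough---nonconstant modulo the centraliser governing the reachable Galois conjugates---for \emph{every} type $T$, and not merely the hypersurface types already treated in the literature. Establishing this bigness of monodromy uniformly across all types, and reconciling the resulting finiteness on the cohomological side with the stacky passage back to \emph{linear} isomorphism classes of varieties, is where the real work, and the genuine difficulty, would lie.
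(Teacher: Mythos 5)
You should first be aware that the statement you were asked to prove is labelled a \emph{conjecture} in the paper, and the paper itself does not prove it: it establishes only the case of Hodge level at most $1$ (Theorem \ref{theorem: main theorem}) and a conditional result, assuming the Lang--Vojta conjecture, for hypersurfaces and complete intersections of general type (Theorem \ref{thm: lang implies shaf}). So there is no complete proof in the paper against which your proposal can be matched, and your own text --- correctly --- does not claim to close the gap either: you explicitly leave the arithmetic finiteness input for Hodge level $\geq 2$, and the big-monodromy computation, as open problems. Your proposal is therefore a research programme, not a proof, and it could not be otherwise.

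Within that framing, your treatment of the accessible part is essentially identical to the paper's proof of Theorem \ref{theorem: main theorem}: the moduli stack $\mathcal{C}_T = [\PGL_{n+c+1}\backslash \mathrm{Hilb}_T]$, the identification of good reduction with (near-)integral points of the stack (Lemma \ref{lem:glueing}), Flenner's infinitesimal Torelli plus faithfulness of $\Lin X$ on cohomology to get a separated quasi-finite period morphism (the mechanism of Theorem \ref{thm:arithmetic_torelli}), and Faltings's theorem as the arithmetic engine, with ad hoc arguments for quadrics, cubic surfaces and intersections of two quadrics. Where you genuinely diverge is in the strategy for Hodge level $\geq 2$. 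You propose the Lawrence--Venkatesh $p$-adic period method, whose crux is bigness of monodromy of the universal family for every type $T$; the paper instead conditions on the Lang--Vojta conjecture (Theorem \ref{thm: lis}), attaching level structure to $\mathcal{C}_T$ to produce a finite \'etale atlas which is a quasi-projective \emph{scheme} (Theorem \ref{thm: finet}), invoking Zuo's theorem to show all subvarieties of this atlas are of log-general type, and concluding by a Chevalley--Weil descent. The trade-off is clear: the paper's route yields a clean, fully worked-out implication from a single (very open) Diophantine conjecture, valid over arithmetic schemes; your route could in principle be unconditional, but every one of its hard steps --- big monodromy uniformly in $T$, the Galois-cohomological bounds, and the passage from finiteness of cohomological invariants back to finiteness of $K$-linear isomorphism classes through the stack --- remains to be carried out, as you acknowledge. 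Neither approach proves Conjecture \ref{conj}; yours identifies the right obstructions, and it is a reasonable assessment that the motivic finiteness you need is beyond what Faltings-style inputs currently provide.
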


Here a type is just a collection $T=(d_1,\ldots,d_c;n)$ which specifies the dimension of the complete intersection
and the degrees of its defining equations (see Section \ref{section:complete_basics}). Note that
Theorem \ref{theorem: main theorem} proves Conjecture \ref{conj} for complete intersections of Hodge level at most $1$. 

Our last main theorem shows that Conjecture \ref{conj} follows from the Lang--Vojta conjecture in many cases
(see Section \ref{section:lang}  for a discussion of this conjecture). To avoid certain technical difficulties, we only prove it for  hypersurfaces and complete intersections of general type. 

\begin{theorem}\label{thm: lang implies shaf}
The Lang--Vojta conjecture implies the Shafarevich conjecture for hypersurfaces and complete intersections of general type.
\end{theorem}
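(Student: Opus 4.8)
The plan is to realise the complete intersections in question as integral points on a moduli space that one shows to be of log general type, and then to invoke the Lang--Vojta conjecture. Fix a type $T = (d_1, \ldots, d_c; n)$ whose members are of general type. By the adjunction formula a smooth complete intersection $X$ of type $T$ satisfies $\omega_X = \OO_X(e)$ with $e = d_1 + \cdots + d_c - n - c - 1$, and general type means precisely $e > 0$, so that $\omega_X$ is ample and $X$ is canonically polarised. Every abstract isomorphism of two such $X$ must respect $\omega_X$ and hence the hyperplane class $\OO_X(1)$ recovered from it (by Lefschetz $\Pic X$ is generated by $\OO_X(1)$ for $n \geq 3$, and the low-dimensional cases are handled directly); such isomorphisms are therefore linear. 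Consequently the moduli stack $\mathcal{M}_T$ of smooth complete intersections of type $T$ up to linear isomorphism embeds as a locally closed substack of the stack of canonically polarised varieties, and by Definition \ref{def:good_ceduction_CI} the objects with good reduction outside $S$ are exactly the $\OO_{K,S}$-points of $\mathcal{M}_T$. Thus the statement is the assertion that $\mathcal{M}_T$ is arithmetically hyperbolic over $K$.

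My first reduction would be to pass from the stack $\mathcal{M}_T$ to its coarse moduli space $M_T$. A $K$-linear isomorphism class maps to a point of $M_T(K)$, and two classes share an image only if they are twists of one another; for good reduction outside $S$ these twists are unramified outside $S$ and so lie in the subset of $H^1(K, \Aut(X))$ unramified outside $S$. Since complete intersections of general type have finite automorphism group acting faithfully on cohomology (Section \ref{sec:automorphisms_faithful}), Hermite--Minkowski shows this set is finite. Hence the fibres of $\mathcal{M}_T(\OO_{K,S}) \to M_T(\OO_{K,S})$ are finite, and it suffices to prove that $M_T(\OO_{K,S})$ is finite, i.e.\ that $M_T$ is arithmetically hyperbolic.

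The geometric heart is to show that $M_T$, and indeed each of its positive-dimensional subvarieties, is of log general type. Since distinct points of the coarse space correspond to non-isomorphic canonically polarised varieties, the tautological family over $M_T$ has maximal variation, and the same holds after restriction to any positive-dimensional subvariety. The resolution of Viehweg's hyperbolicity conjecture (Campana--Paun, Popa--Schnell) then shows that the base of a maximal-variation family of canonically polarised varieties is of log general type; applying this to $M_T$ and to each of its positive-dimensional subvarieties exhibits $M_T$ as pseudo-hyperbolic with empty exceptional locus. Feeding this into the Lang--Vojta conjecture (Section \ref{section:lang})---a quasi-projective variety all of whose positive-dimensional subvarieties are of log general type is arithmetically hyperbolic, the passage from non-density to genuine finiteness being a Noetherian induction over the nested exceptional loci---yields finiteness of $M_T(\OO_{K,S})$, and combined with the previous reductions proves the theorem.

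The main obstacle I anticipate is the interface between the moduli stack, its coarse space, and their integral models: one must verify that good reduction as a complete intersection really does extend to an $\OO_{K,S}$-point of a chosen integral model of $M_T$, using separatedness of the moduli problem, and one must deal with the quotient singularities and the boundary of $M_T$ so that ``log general type'' and Lang--Vojta genuinely apply to the variety at hand (for instance by working on a resolution, or directly on the stack). It is precisely to keep this interface manageable---ensuring ampleness of $\omega_X$ and linearity of isomorphisms---that one restricts to hypersurfaces and to complete intersections of general type.
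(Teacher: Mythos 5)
Your reduction from the stack to its coarse space $M_T$ via twists and Hermite--Minkowski is sound (it is essentially Theorem \ref{thm: twists} of the paper), but the geometric heart of your proposal has a genuine gap: you apply Campana--P{\u{a}}un to ``the tautological family over $M_T$'' and to its restrictions to subvarieties, and no such family exists. The universal family lives over the stack $\mathcal{C}_T$, not over its coarse space; a subvariety $Z \subset M_T$ carries a family only after passing to a finite, in general \emph{ramified}, cover $Z' \to Z$ (a level structure). Campana--P{\u{a}}un then yields that $Z'$ is of log-general type, but log-general type does not descend along finite surjective maps --- a genus-two curve surjects onto $\PP^1$ --- so nothing follows for $Z$ itself, and the hypothesis of Conjecture \ref{lang conj} cannot be verified for $M_T$. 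This is not a removable technicality: the positivity produced by Viehweg-type theorems lives on the level cover and is eaten by the ramification over the stacky locus when pushed down to the coarse space (for instance, the coarse space of smooth plane quartics, a type covered by the theorem, is an open subvariety of the \emph{rational} sixfold $M_3$, so your claim would at best require a separate and delicate argument there). The paper's proof is organised precisely around this point: using Proposition \ref{prop:faithful} and Popp's construction \cite{PoppI}, it builds a finite \emph{\'etale} cover $U \to \mathcal{C}_{T,B}$ which is itself a smooth quasi-projective scheme (Theorem \ref{thm: finet}); it proves log-general type of all subvarieties \emph{upstairs} on $U_K$, where an honest family exists; it applies Lang--Vojta to $U(\widetilde{B})$ for the finitely many (Theorem \ref{thm:HM}) finite \'etale covers $\widetilde{B} \to B$ of bounded degree; and it descends by a Chevalley--Weil argument (Theorem \ref{thm: lis}). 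In short, the hyperbolicity must be placed on the cover and the integral points lifted up to it; your proposal runs this reduction in the wrong direction.

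There is a second gap of scope: you fix at the outset a type $T$ of general type, but the theorem also asserts the conclusion for \emph{all} hypersurfaces of degree at least $3$, including Fano and Calabi--Yau ones (cubic fourfolds, quintic threefolds, etc.), whose members are not canonically polarised. For these the Campana--P{\u{a}}un input is unavailable in principle, not merely in your formulation. The paper covers them because its source of log-general type is the period map rather than the canonical bundle: Flenner's infinitesimal Torelli theorem \cite{Fl86} makes the period map on $U$ immersive, and Zuo's theorem \cite{ZuoNeg} (via Lemma \ref{lem:analytic}) then gives log-general type of every subvariety of $U_K$ with no positivity assumption on $\omega_X$; the only automorphism input needed is Proposition \ref{prop:faithful}, which holds for all hypersurfaces of degree at least $3$ other than plane cubics. (Quadrics and plane cubics have Hodge level at most $1$ and are absorbed by Theorem \ref{theorem: main theorem}, which is how the paper's deduction of Theorem \ref{thm: lang implies shaf} from Theorem \ref{thm: lis} begins.)
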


The Lang--Vojta conjecture enters the picture via a relationship, which we often exploit in this paper,
between complete intersections with good reduction and integral points on appropriate moduli stacks. To prove Theorem \ref{thm: lang implies shaf}, we construct a finite    \'{e}tale cover of these moduli stacks using a ``level structure'' (see Theorem \ref{thm: finet} for a precise statement). This uses the fact that the automorphism groups of many complete intersections act faithfully on their cohomology groups (Proposition \ref{prop:faithful}). Once we attach level structure we obtain a scheme whose subvarieties are of log-general type by a result of Zuo \cite{ZuoNeg};
Theorem \ref{thm: lang implies shaf} then follows from the Lang--Vojta conjecture via a descent argument, similar to the theorem of Chevalley-Weil \cite[\S4.2]{Ser97}.

\subsection*{Outline of the paper} In Section \ref{Sec:Complete}, we study the geometry of complete intersections,
in particular their moduli stacks, automorphism groups and Hodge theory. Our main result here is
Theorem~\ref{thm:qf_torelli}, a ``quasi-finite Torelli theorem'' for smooth 
complete intersections over $\CC$, under suitable assumptions.

In Section \ref{Sec:Arithmetic} we give arithmetic applications of these results by proving Theorem \ref{thm:arithmetic_torelli} and Theorem \ref{thm:global_arithmetic_torelli}, together with results on the associated moduli stacks. In Section \ref{Sec:Good}, we define the notion of good reduction for complete intersections and study some of its basic properties. Our main result here is Theorem \ref{thm: twists}, which says that a complete intersection admits only finitely many twists with good reduction, under suitable assumptions.

Section \ref{Sec:Shaf} is dedicated to the proof of Theorem \ref{theorem: main theorem}, and in Section \ref{section:Lang_Shaf}
we prove Theorem \ref{thm: lang implies shaf}. We actually prove a stronger statement (Theorem~\ref{thm: lis})
which applies over arithmetic schemes. This proof requires various results on the moduli stack of complete intersections of certain types, in particular we show in Theorem \ref{thm: finet} how to attach a level structure to these stacks.

\begin{ack} 
We are grateful to Giuseppe Ancona,  Jean-Beno\^it Bost, Martin Bright, Fr\'ederic Campana,  Jean-Louis Colliot-Th\'el\`ene, Bas Edixhoven, Jochen Heinloth, Marc Hindry, David Holmes,  Ben Moonen, Laurent Moret-Bailly,  Duco van Straten, Lenny Taelman, Olivier Wittenberg, and Kang Zuo for helpful discussions on several parts of this paper. 
Special thanks go to Olivier Benoist for very helpful discussions on complete intersections and level structure, Yohan Brunebarbe for his help on period maps and the proof of Theorem \ref{thm:qf_torelli}, and  Angelo Vistoli for answering our questions on stacks and his help in proving Proposition \ref{prop:crit_for_rep}. We are very grateful to the anonymous referee for useful comments.
The first named author gratefully acknowledges the support of SFB/Transregio 45.
\end{ack}

\begin{con} 
For a number field $K$, we let $\OO_K$ denote its ring of integers. If $S$ is a finite set of finite places of $K$, we let 
$\OO_K[S^{-1}]$
denote the ring of $S$-integers of $K$.

A variety over a field $k$ is a   finite type  $k$-scheme. For a Noetherian scheme $X$,
we denote by $X^{(1)}$ the set of points  of $X$ of codimension $1$.

A \textit{Dedekind scheme} is an integral normal Noetherian  one-dimensional scheme. 
An \textit{arithmetic scheme} is an integral regular finite type  flat scheme over $\ZZ$.  Note that if $B$ is a one-dimensional arithmetic scheme, then there exist a number field $K$ and a finite set of finite places $S$ of $K$ such that $B \cong \Spec \OO_{K}[S^{-1}]$.

Let $G$ be a smooth group scheme of finite type over a scheme $B$.
We denote by $\mathrm{H}^1(B,G)$ the first \v{C}ech cohomology set of $B$ with coefficients in $G$
with respect to the \'etale topology \cite[\S III.2]{MilneEC}.

For a stack $\mathcal{M}$ and a scheme $B$, we denote by $[\mathcal{M}(B)]$ the set
of $B$-isomorphism classes of objects of the groupoid $\mathcal{M}(B)$.   This is also sometimes denoted by $\pi_0(\mathcal M(B))$ in 
the literature.

For $g \in \NN$, we denote by $\mathcal{A}_{g,1}$ the moduli stack of principally polarised abelian varieties of relative dimension $g$ over $\ZZ$.

We always consider complete intersections of codimension $c$ in $\PP^{n+c}$ of type 
$$T  =(d_1,\ldots,d_c;n) \qquad \text{(see Definition \ref{def:type})}.$$
For such a complete intersection $X$, we denote by $\Lin X$ the group scheme of \emph{linear}
automorphisms of $X$ (i.e.~those automorphisms which are induced by an automorphism
of the ambient projective space).
\end{con}

\section{The geometry of complete intersections} \label{Sec:Complete}
In this section we gather various facts we shall need about the
geometry of complete intersections, in particular, their Hodge theory,
the intermediate Jacobian, their automorphisms and the structure of their moduli spaces. \\

\subsection{Complete intersections}\label{section:complete_basics}
\subsubsection{Definitions}
Let $B$ be a scheme.

\begin{definition} \label{def:type}
    A \emph{type} is a collection of integers 
    $$T=(d_1,\ldots,d_c;n)  \quad \mbox{with} \quad n \geq 1, \quad c\geq1, \quad 2\leq d_1\leq \ldots \leq d_c.$$
    A \textit{complete intersection of type $T$ over $B$} is a closed subscheme of codimension $c$ in $\PP^{n+c}_B$ 
    that is flat over $B$ and which is the zero locus of $c$ homogeneous polynomials of degrees $d_1,\ldots,d_c$
    over $B$, respectively. A complete intersection over $B$ is a complete intersection of unspecified type.
\end{definition}
Here, a homogeneous polynomial of degree $d$ over a scheme $B$ means a global section of the sheaf
$\pi_* \mathcal{O}_{\smash{\PP^{n+c}_B}}(d)$, where $\pi$ is the projection to $B$.
We reserve the variables $d_1,\ldots,d_c$ and $n$ for the above usage throughout this paper.

We will say that a type $T= (d_1,\ldots,d_c;n)$ is a hypersurface if $c=1$.
We will say that $T$ is of general type if $d_1+\ldots+ d_c \geq n+c+2$ (this agrees with the usual definition,
as the adjunction formula implies that such complete intersections are exactly those with ample canonical bundle).

\subsubsection{The  moduli stack of smooth complete intersections}\label{section: moduli stack}  
In this paper we will require various facts on the moduli stack
of smooth complete intersections. The relevant  theory has been worked out in great detail by Olivier Benoist
in his Ph.D. thesis \cite{BenoistThesis};  many of the relevant results for us appear
in \cite{Ben13}. 
 
We will assume that the reader is familiar with the basic theory of stacks, as can be found for example in \cite{LMB}.
Among some of the  basic definitions and results we need are  \cite[Def.~3.9]{LMB}, \cite[Def.~4.1]{LMB}, \cite[Def.~4.7.1]{LMB}, \cite[Def.~7.6]{LMB}, \cite[Lem.~7.7]{LMB}, and \cite[Thm.~8.1]{LMB}.

We first recall the construction of the moduli stack of smooth complete intersections. Let $T$ be a type
and let $\mathrm{Hilb}_T$ denote the open subscheme of the Hilbert scheme of $\PP_\ZZ^{n+c}$
which parametrises  smooth complete intersections of type $T$.
We define the moduli stack $\mathcal{C}_T$ of smooth complete intersections
of type $T$ to be the quotient stack $[ \PGL_{n+c+1} \backslash \mathrm{Hilb}_T ]$.
For a scheme $B$, we write $\mathcal C_{T,B}$ for $\mathcal C_T \times_\ZZ B$.

\begin{proposition}[Benoist]\label{prop:moduli_stack}
	Let $T$ be a type. Then
\begin{enumerate}
\item[$(1)$] $\mathcal C_T$ is  smooth and of finite type over $\ZZ$ with geometrically connected fibres.
\end{enumerate}
Suppose that $T \neq (2;n)$. Then
\begin{enumerate}
\item[$(2)$] $\mathcal{C}_T$ is separated over $\ZZ$.
\item[$(3)$] $\mathcal C_T$ is Deligne-Mumford over $\ZZ[1/6]$.
\item[$(4)$] There exist a smooth quasi-projective $\ZZ[1/6]$-scheme $U_T$ and an \'etale   surjective morphism $U_T\to \mathcal C_{T,\ZZ[1/6]}$.
\end{enumerate}
\end{proposition}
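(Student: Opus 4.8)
The plan is to work throughout with the affine space of defining equations rather than with $\mathrm{Hilb}_T$ directly. Writing $V_d = \mathrm{H}^0(\PP^{n+c}_\ZZ, \OO(d))$ and $\mathbf{H}_T = \prod_{i=1}^c V_{d_i}$, regarded as an affine space over $\ZZ$, let $W_T \subseteq \mathbf{H}_T$ be the open subscheme of tuples $(f_1,\ldots,f_c)$ whose common zero locus is a smooth complete intersection of type $T$. Sending a tuple to its zero locus gives a morphism $q\colon W_T \to \mathrm{Hilb}_T$, and the first step is to identify this as a torsor under the smooth connected affine group scheme $\Gamma_T$ of admissible changes of generators (block upper-triangular systems $(a_{ij})$ with $a_{ij}$ a form of degree $d_i-d_j$ when $d_i \geq d_j$ and zero otherwise, invertible on the blocks of equal degree); freeness of the action is the classical fact that the homogeneous ideal of a complete intersection determines its minimal generators up to such a change.

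For part (1), $W_T$ is open in affine space, hence smooth of finite type over $\ZZ$, and since smoothness descends along the smooth surjection $q$ the same holds for $\mathrm{Hilb}_T$ (alternatively, this is the unobstructedness of deformations of smooth complete intersections). Consequently $\mathcal{C}_T = [\PGL_{n+c+1} \backslash \mathrm{Hilb}_T]$ is smooth of finite type over $\ZZ$, being the quotient of a smooth finite-type scheme by a smooth affine group scheme. For geometric connectedness I would fix a geometric point $\Spec \overline{k} \to \Spec \ZZ$: then $W_{T,\overline{k}}$ is a nonempty open in an irreducible affine space, hence irreducible, so its surjective image $\mathrm{Hilb}_{T,\overline{k}}$ is connected, and hence so is its further surjective image $\mathcal{C}_{T,\overline{k}}$.

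For parts (2) and (3) I would use the criterion that $[\PGL_{n+c+1} \backslash \mathrm{Hilb}_T]$ is separated exactly when the action morphism $\sigma\colon \PGL_{n+c+1} \times \mathrm{Hilb}_T \to \mathrm{Hilb}_T \times \mathrm{Hilb}_T$, $(g,X)\mapsto (gX,X)$, is proper, and is moreover Deligne-Mumford when the stabilisers — which are the linear automorphism group schemes $\Lin X$ — are unramified. Via the valuative criterion over a discrete valuation ring $R$ with fraction field $F$, separatedness reduces to showing that any $g_F \in \PGL_{n+c+1}(F)$ carrying one $R$-point of $\mathrm{Hilb}_T$ to another already lies in $\PGL_{n+c+1}(R)$; otherwise the reduction of a cleared-denominator representative would be a nonzero singular matrix, forcing a smooth complete intersection to degenerate to a singular limit. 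This is precisely the GIT-stability of smooth complete intersections of type $T\neq(2;n)$ due to Mumford and Benoist, and I expect it to be the main obstacle; it is also what fails for smooth quadrics, whose orthogonal automorphism groups are positive-dimensional. Stability also gives finiteness of $\Lin X$; to obtain that $\Lin X$ is unramified, hence reduced, one must exclude the residue characteristics in which infinitesimal linear automorphisms can appear, and inverting $2$ and $3$ suffices, which is the source of the base $\ZZ[1/6]$.

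For part (4), over $\ZZ[1/6]$ the stack $\mathcal{C}_T$ is now a smooth separated Deligne-Mumford stack of finite type with finite étale stabilisers, and I would produce a quasi-projective étale atlas by rigidifying with a level structure. For a prime $\ell \geq 5$, adjoining to each complete intersection a trivialisation of its mod-$\ell$ primitive cohomology yields, using the faithfulness of the action of $\Lin X$ on cohomology (Proposition \ref{prop:faithful}), a moduli problem with no nontrivial automorphisms; this problem is representable by a quasi-projective scheme $U_T^{(\ell)}$, smooth because it is étale over the smooth $\mathcal{C}_T$, and the forgetful map $U_T^{(\ell)} \to \mathcal{C}_T$ is finite étale over $\ZZ[1/(6\ell)]$. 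Taking $U_T = U_T^{(5)} \sqcup U_T^{(7)}$ then gives a single smooth quasi-projective scheme with an étale surjection onto $\mathcal{C}_{T,\ZZ[1/6]}$, since every residue characteristic $p\neq 2,3$ avoids at least one of $5$ and $7$.
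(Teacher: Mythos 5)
Your reduction of separatedness to properness of the action morphism $\sigma\colon \PGL_{n+c+1}\times\mathrm{Hilb}_T\to\mathrm{Hilb}_T\times\mathrm{Hilb}_T$, and the valuative-criterion reformulation, are correct; the gap is the input you feed into them. You invoke ``GIT-stability of smooth complete intersections of type $T\neq(2;n)$ due to Mumford and Benoist'' as a known theorem, but no such theorem exists in this generality: Mumford proved stability for smooth hypersurfaces of degree at least $3$, and Benoist \cite{BenoistCoarse} proved stability (equivalently, quasi-projectivity of the coarse space) only for restricted degree sequences. For a general type this is an open problem, and the paper's own structure reflects this: quasi-projectivity of $\mathcal C_{T,\CC}^{\mathrm{coarse}}$, which would follow from stability for all $T\neq(2;n)$, is precisely the missing ingredient that forces the restrictive hypotheses on $T$ in Theorem \ref{thm: finet} and Theorem \ref{thm: lis}. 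Separatedness for \emph{all} $T\neq(2;n)$ is the content of \cite[Thm.~1.6]{Ben13}, which is proved by a direct analysis of the valuative criterion, not by establishing stability (separatedness is strictly weaker than stability, which is why it can be proved unconditionally). Likewise, for (3) you simply assert that inverting $6$ kills infinitesimal linear automorphisms; that assertion is exactly \cite[Thm.~1.7]{Ben13} and requires proof. A smaller slip in part (1): $q\colon W_T\to\mathrm{Hilb}_T$ is not a $\Gamma_T$-torsor in general, since Koszul syzygies can give positive-dimensional stabilisers (for type $(2,2,4;n)$ the element with $a_{31}=cf_2$, $a_{32}=-cf_1$ fixes $(f_1,f_2,f_3)$); your conclusion survives because $W_T$ is open in the total space of a vector bundle over $\mathrm{Hilb}_T$ (using $\mathrm{H}^1(\PP^{n+c},\mathcal I_Z(d))=0$), so $q$ is still smooth and surjective with irreducible fibres.

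Part (4) as you set it up would fail, for a reason documented in this very paper. The level-structure rigidification needs $\Lin X$ to act faithfully on mod-$\ell$ cohomology for \emph{every} smooth complete intersection of \emph{every} type $T\neq(2;n)$; but Proposition \ref{prop:faithful} is proved only for restricted types, and faithfulness is genuinely false for odd-dimensional intersections of two quadrics: by Proposition \ref{prop:two_quadrics} and diagram \eqref{diag:J(X)}, the involution $\sigma_0\sigma_1$ acts trivially on $J(X)$ and hence on $\mathrm H^n(X,\ZZ)$, so for $T=(2,2;n)$ your rigidified functor retains nontrivial automorphisms and cannot be an algebraic space, let alone a quasi-projective scheme. (Even where faithfulness holds, the asserted quasi-projectivity of $U_T^{(\ell)}$ is unjustified; it is essentially the unknown quasi-projectivity of the coarse space again.) None of this machinery is needed: given (1)--(3), part (4) is formal, and this is how the paper proves it. A Deligne--Mumford stack admits by definition an \'etale surjection from a scheme \cite[Def.~4.1]{LMB}; since $\mathcal C_{T,\ZZ[1/6]}$ is quasi-compact and smooth over $\ZZ[1/6]$, this scheme may be taken to be a finite disjoint union of affine finite type $\ZZ[1/6]$-schemes, which is then automatically quasi-projective, and smooth because it is \'etale over a smooth stack.
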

\begin{proof}
	The scheme $\mathrm{Hilb}_T$ is   smooth  of finite type over $\mathbb Z$ with geometrically connected fibres;  see \cite[Prop.~1.9]{Del72} or \cite[Prop.~2.2.1]{BenoistThesis}.
	Therefore, as $\PGL_{n+c+1}$ is smooth of finite type over $\Spec \ZZ$,
	we see that $\mathcal C_T = [ \PGL_{n+c+1} \backslash \mathrm{Hilb}_T ]$
	is also smooth of finite type over $\ZZ$ with geometrically connected fibres (see \cite[\S 2.3.1]{BenoistThesis}).
	If $T\neq (2;n)$, then $(2)$ and $(3)$ are \cite[Thm.~1.6]{Ben13} and \cite[Thm.~1.7]{Ben13},
	respectively. 
	 Finally, the  last statement  follows from the definition of a smooth finite type  Deligne-Mumford stack
	 \cite[Def.~4.1]{LMB}.
\end{proof}

An explicit description of the functor of points of $\mathcal{C}_T$ can be found in \cite[\S2.3.2]{BenoistThesis}.
For simplicity, we recall this only for points defined over a perfect field $K$.
In which case, the elements $\mathcal{C}_T(K)$ are pairs $(Y,\mathcal{L})$ where $Y$ is a smooth projective variety
over $K$ equipped with an element $\mathcal{L} \in \Pic_{Y/K}(K)$,
such that after a finite extension of $K$ the element $\mathcal{L}$ becomes an ample line bundle
which embeds $Y$ as a smooth complete intersection of type $T$.
Here $\Pic_{Y/K}$ denotes the Picard scheme of $Y$ over $K$ (see \cite[Ch.~8]{BLR});
one has $\Pic_{Y/K}(K) = (\Pic Y_{\bar K})^{\Gal(\bar K/K)}$.

Let us emphasise that $Y$ is not necessarily a complete intersection over $K$.
The obstruction to $(Y,\mathcal{L})$ being isomorphic	to some complete intersection of type $T$ over $K$ 
lies in the Brauer group $\Br K$ of $K$.
Namely, the Hochschild-Serre spectral sequence (see \cite[Ch.~8, p.~203]{BLR})) yields an exact sequence
$$0 \to \Pic Y \to (\Pic Y_{\bar K})^{\Gal(\bar K/K)} \to \Br K.$$
Hence $(Y,\mathcal{L})$ is a complete intersection of type $T$ if and only if the image of $\mathcal{L}$ in $\Br K$ is trivial.  

\subsubsection{Isom-schemes of complete intersections}
We next move onto a study of the automorphism and isomorphism schemes
of complete intersections. 

Let $B$ be a  scheme and let $X$ and $Y$ be smooth complete intersections of the same type $T$ over $B$. By the theory of Hilbert schemes \cite{GrothendieckHilbertSchemes},
the sheaf on the category of $B$-schemes which associates to a $B$-scheme $Z$ the set of $Z$-linear
isomorphisms $X \times_B Z\to Y\times_B Z$ is representable by a  $B$-scheme, which we denote by
$\Isom_B(X, Y)$ (here by a \emph{linear} isomorphism, we mean an isomorphism which is induced by 
an automorphism of the ambient projective space). We denote by $\Lin_B X = \Isom_B(X,X)$ the group scheme of $B$-linear automorphisms of $X$.
We shall often omit the subscript if $B$ is clear from the context.

 \begin{lemma}\label{lem: isomscheme} 
	Let $B$ be a scheme and let $T \neq (2;n)$ be a type.  If $X$ and $Y$ are smooth complete
intersections of type $T$ over $B$, then the morphism of schemes \[\Isom_B(X,Y)\to B\]  is finite.  
\end{lemma}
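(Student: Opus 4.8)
The plan is to show that $\Isom_B(X,Y)\to B$ is at the same time \emph{affine} and \emph{proper}, and then to invoke the standard fact that a morphism of schemes is finite if and only if it is both affine and proper. Throughout, let $[X],[Y]\colon B\to \mathrm{Hilb}_{T,B}$ denote the two sections corresponding to $X$ and $Y$, and recall that $\PGL_{n+c+1}$ acts on $\mathrm{Hilb}_T$, giving a morphism $a_X\colon \PGL_{n+c+1,B}\to \mathrm{Hilb}_{T,B}$, $g\mapsto g\cdot[X]$. A $Z$-point $g$ of $\PGL_{n+c+1,B}$ yields a $Z$-linear isomorphism $X_Z\to Y_Z$ precisely when $g\cdot[X]=[Y]$ in $\mathrm{Hilb}_T(Z)$, so $\Isom_B(X,Y)$ is the fibre product of $a_X$ with the section $[Y]$.

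For affineness, I would argue as follows. Since the Hilbert scheme $\mathrm{Hilb}_{T,B}\to B$ is separated, the section $[Y]\colon B\to \mathrm{Hilb}_{T,B}$ is a closed immersion; base changing along $a_X$ shows that the induced map $\Isom_B(X,Y)\hookrightarrow \PGL_{n+c+1,B}$ is a closed immersion. As $\PGL_{n+c+1}$ is affine over $\ZZ$, the composite $\Isom_B(X,Y)\to \PGL_{n+c+1,B}\to B$ is affine. This step is elementary and uses nothing about the type beyond the existence and separatedness of the Hilbert scheme.

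For properness I would pass to the moduli stack. Under the quotient presentation $\mathcal{C}_T=[\PGL_{n+c+1}\backslash \mathrm{Hilb}_T]$, the objects $[X],[Y]\in\mathcal{C}_T(B)$ have stabiliser $\Lin_B X$ and the sheaf of isomorphisms between them in the stack is exactly $\{g : g\cdot[X]=[Y]\}=\Isom_B(X,Y)$; one routine point to verify is that this stacky $\Isom$ sheaf really does agree with the Hilbert-scheme $\Isom$ above, which is immediate from the definition of the quotient stack. Granting this, $\Isom_B(X,Y)$ is the base change of the diagonal $\Delta\colon \mathcal{C}_T\to \mathcal{C}_T\times_\ZZ\mathcal{C}_T$ along $([X],[Y])\colon B\to \mathcal{C}_T\times_\ZZ\mathcal{C}_T$. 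Since $T\neq(2;n)$, Proposition~\ref{prop:moduli_stack}(2) tells us that $\mathcal{C}_T$ is separated over $\ZZ$, i.e. $\Delta$ is proper; as properness is stable under base change, $\Isom_B(X,Y)\to B$ is proper. Combining this with the affineness established above yields finiteness.

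The main obstacle, and the place where the hypothesis $T\neq(2;n)$ is genuinely used, is the properness step: it rests entirely on Benoist's separatedness theorem for $\mathcal{C}_T$ (Proposition~\ref{prop:moduli_stack}(2)). This is consistent with the exclusion of quadrics, for which the linear automorphism group is positive-dimensional and $\Isom_B(X,Y)\to B$ is visibly not finite. I would therefore expect the proof to be short once the identification of $\Isom_B(X,Y)$ with the base change of $\Delta$ is in place; note in particular that I deliberately avoid relying on the Deligne--Mumford property of Proposition~\ref{prop:moduli_stack}(3), since that only holds over $\ZZ[1/6]$, whereas deducing properness from separatedness and affineness from the $\PGL_{n+c+1}$-embedding both work over an arbitrary base $B$.
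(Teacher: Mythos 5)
Your proposal is correct and follows essentially the same route as the paper: identify $\Isom_B(X,Y)$ with the base change of the diagonal of $\mathcal{C}_{T,B}$ along the moduli map for $(X,Y)$, deduce properness from Benoist's separatedness theorem (Proposition~\ref{prop:moduli_stack}(2)), note affineness from linearity, and conclude via ``affine $+$ proper $=$ finite''. The only difference is that you spell out the affineness step (closed immersion into $\PGL_{n+c+1,B}$ via the Hilbert-scheme description), which the paper simply asserts.
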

\begin{proof} By Proposition \ref{prop:moduli_stack} the stack $\mathcal C_{T}$ is separated over $\ZZ$. Therefore,  the diagonal \[\Delta :\mathcal C_{T,B} \to \mathcal C_{T,B} \times_{B} \mathcal C_{T,B} \] is   proper \cite[Def.~7.6]{LMB}. 
 There is a Cartesian diagram of stacks \[\xymatrix{\Isom_B(X,Y) \ar[rr] \ar[d] & & B \ar[d] \\ \mathcal C_{T,B} \ar[rr]_{\Delta} & & \mathcal C_{T,B} \times \mathcal C_{T,B}} \] where $B\to \mathcal C_{T,B} \times \mathcal C_{T,B}$ is the moduli map associated to $X$ and $Y$.  
   As proper morphisms
are stable by base-change, we see that $\Isom_B(X,Y)\to B$ is proper. Since we are considering linear isomorphisms, the morphism   $\Isom_B(X,Y)\to B$ is affine. As proper affine morphisms of  schemes are finite \cite[Lem.~3.3.17]{Liu2},  this concludes the proof.
\end{proof}

\begin{remark} \label{rem:aut}
	In this paper, we only consider \emph{linear} isomorphisms
	between complete intersections.
	This is not a serious restriction in general;
	the only smooth complete intersections which can admit a non-linear
	automorphism are curves and  K$3$ surfaces (see \cite[Thm.~3.1]{Ben13}).
\end{remark}

\subsection{Hodge theory of complete intersections}  \label{sec: Hodge theory}
In this section we explain the necessary Hodge theory of complete intersections over $\CC$ required in this paper. For basic Hodge theory see \cite{Mullerstach} and \cite{Voi07}. Results particular to  complete intersections can be found in \cite{Del72} and \cite{Rap72}.

Let $X$ be a smooth complete intersection of dimension
$n$ over $\CC$. For each $0 \leq i \leq 2n$ with $i \neq n$, we have
$$\mathrm H^i(X,\CC)= \left\{
\begin{array}{ll}
	0,& \text{if } i \text{ is odd},\\
	\CC,& \text{if } i \text{ is even}.
\end{array}
\right.$$
In particular, the only ``interesting'' cohomology group of $X$ is $\mathrm H^n(X,\CC)$.
By Hodge theory, there is a decomposition $\mathrm H^n(X,\CC)=\bigoplus_{p+q = n} \mathrm H^{p,q}(X)$.
The \emph{Hodge level} of $X$ is defined to be the supremum
$$\ell(X) = \sup\{| p-q |: p+q = n, \quad \mathrm H^{p,q}(X) \neq 0\}.$$
If the above set is empty, then by convention we define the Hodge level to be $-\infty$.
The Hodge level of $X$ is then a well-defined element
of  $\ZZ_{\geq 0}\cup \{-\infty\}$, and should be thought of as a rough measure
of the geometrical complexity of $X$.
Note that 
$\ell(X)~\equiv~n~\bmod 2,$
where by convention $-\infty \equiv 1 \bmod 2$.

Two smooth complete intersections of the same type $T$  over $\CC$ have the same Hodge level, as the Hilbert scheme $\mathrm{Hilb}_T$ is irreducible. Hence we define 
the \emph{Hodge level of $T$} to be the Hodge level of some (hence any) smooth complete intersection  $X$  of type $T$ over $\CC$.
This allows us to define the Hodge level of a smooth complete intersection over  any field $K$  to be the Hodge level of its type.  

The group $\mathrm H^n(X,\ZZ)$ is torsion free, which we view as a sublattice of $\mathrm H^n(X,\CC)$.
The primitive cohomology   $\mathrm{H}^n_{\prim}(X,\CC)$ of $X$ is defined as the kernel of the Lefschetz operator $L:\mathrm{H}^n(X,\CC)\to \mathrm{H}^{n+2}(X,\CC)$ (see \cite[\S6.2.3]{Voi07}). The lattice $\mathrm{H}^n_{\prim}(X,\ZZ)$ is defined to be $\mathrm{H}^n(X,\ZZ)\cap \mathrm{H}^n_{\prim}(X,\CC)$. If $n$ is odd then $\mathrm{H}^n_{\prim}(X,\ZZ) = \mathrm{H}^n(X,\ZZ)$, otherwise $\rank \mathrm{H}^n_{\prim}(X,\ZZ) = \rank \mathrm{H}^n(X,\ZZ) -1$.

\subsubsection{Classification} \label{sec:classification}
The classification of smooth complete intersections of Hodge level at most $1$
was performed by Deligne and Rapoport \cite[\S 2]{Rap72}.
We record this classification here, together with the type
and the $n$th Betti number $b_n$ in relevant cases.

\medskip

\noindent
Hodge level $-\infty$:
\begin{itemize}
	\item Quadric hypersurfaces of odd dimension, $(2;n)$, $n$ odd, $b_n = 0$.
\end{itemize}
Hodge level $0$:
\begin{itemize}
	\item Quadric hypersurfaces of even dimension, $(2;n)$, $n$ even, $b_n=1$.
	\item Cubic surfaces, $(3;2)$, $b_2 = 7$.
	\item Even-dimensional intersections of two quadrics, $(2,2;n)$, $n$ even, \\$b_n = n+4$
\end{itemize}
Hodge level $1$:
\begin{itemize}
	\item One-dimensional non-rational smooth complete intersections.
	\item Cubic threefolds, $(3;3)$, $b_3=10$.
	\item Intersections of a quadric and cubic in $\PP^5$, $(2,3;3)$, $b_3=40$.
	\item Cubic fivefolds, $(3;5)$, $b_5=42$.
	\item Quartic threefolds, $(4;3)$, $b_3=60$.
	\item Odd-dimensional intersections of two quadrics, $(2,2;n)$, $n$ odd,\\ $b_n=n+1$.
	\item Odd-dimensional intersections of three quadrics, $(2,2,2;n)$, $n$ odd,\\
	$b_n=n^2 + 5n +4$
\end{itemize}
In the sequel, for a type $T$ of Hodge level $1$, we let $g(T) = b_n(T)/2$.
We shall freely use that the types which give rise to curves of genus $1$ are $(3;1)$ and $(2,2;1)$, and that, for 
each $g \geq 2$, there are only finitely many types which give rise to smooth curves of genus $g$.

\subsection{Intermediate Jacobians} \label{sec:int_jac}
Let $X$ be a smooth complete intersection of odd dimension $n$ over $\CC$.
The \textit{(analytic) intermediate Jacobian}  $J(X)$ 
of $X$ is defined to be the manifold
\[J(X) =  \mathrm H^n(X,\RR)/\mathrm H^n(X,\ZZ),\]
equipped with its natural structure as a complex torus (see \cite[\S12.1.1]{Voi07}).
This should be thought of as an analogue of the Jacobian of a curve
(unlike the case of curves however, this is just a complex torus in general).
Crucial to this paper is that if $X$ has Hodge level $1$,
then $J(X)$ is in fact a principally polarised abelian variety of dimension
$b_n(X)/2$. The polarisation on $J(X)$ is
induced by the cup-product on $\mathrm H^n(X,\ZZ)$ (see \cite{Del72}). 

Let $T$ be a type of Hodge level $1$. Then Deligne \cite[\S2]{Del72} has constructed a certain principally
polarised abelian scheme over $\Hilb_{T,\QQ}$ (see Section \ref{section: moduli stack} for notation).
In particular, this gives rise to a morphism $\Hilb_{T,\QQ} \to \mathcal{A}_{g(T),1, \QQ}$ of stacks,
where $g(T) = b_n(T)/2$.
It follows from \cite[Lem.~2.11]{Del72} that this morphism is $\PGL_{n+c+1}$-invariant, hence descends
to a morphism of stacks
\begin{equation} \label{eqn:int_jac_stacks}
	J: \quad \mathcal{C}_{T,\QQ} \to \mathcal{A}_{g(T),1, \QQ}
\end{equation}
over $\QQ$. Thus, given a smooth complete intersection $X$ of type $T$ over a field $K$ of characteristic $0$,
we may associate to $X$ a principally polarised abelian variety $J(X)$ over $K$, the \emph{(algebraic) intermediate Jacobian} of $X$. As it arises from a morphism of stacks, this construction is functorial and respects base-change.
We record the cohomological properties of this construction here.

\begin{lemma} \label{lem: int Jac} Let $n$ be an odd integer, let $m=(n-1)/2$ and let $X$ be a smooth $n$-dimensional complete intersection of Hodge level $1$ over a field $K$ of characteristic zero.
	\begin{enumerate}
		\item If $K = \CC$, then the analytification of $J(X)$ is canonically isomorphic to the (analytic)  intermediate
		Jacobian of $X$.		
		\item  If $K=\CC$, then there is a canonical isomorphism \[\mathrm H^1(J(X), \ZZ) \cong \mathrm{H}^n(X, \ZZ(m))\] of polarised $\ZZ$-Hodge structures.
		\item Let $K\to \bar K$ be an algebraic closure and $\ell$ a prime number. Then there is a canonical isomorphism
		$$\mathrm H^n(X_{\bar K},\ZZ_\ell(m)) \cong \mathrm H^1(J(X)_{\bar K}, \ZZ_\ell),$$
		of $\mathrm{Gal}(\bar K/K)$-representations.
	\end{enumerate}
\end{lemma}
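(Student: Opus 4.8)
The plan is to deduce all three parts from Deligne's construction in \cite{Del72}, treating the analytic comparison first, then the Hodge-theoretic statement over $\CC$, and finally bootstrapping to the $\ell$-adic statement by working relatively over $\Hilb_{T,\QQ}$. For part $(1)$ I would unwind that construction. By design, the abelian scheme over $\Hilb_{T,\QQ}$ of \cite[\S2]{Del72} is built so that its associated variation of $\ZZ$-Hodge structure is the (primitive) $\mathrm H^n(-,\ZZ(m))$ of the universal complete intersection, which for Hodge level $1$ with $n$ odd is all of $\mathrm{H}^n(-,\ZZ(m))$. The analytic torus attached to the weight-$1$ Hodge structure $\mathrm H^n(X,\ZZ(m))$ is precisely $\mathrm H^n(X,\RR)/\mathrm H^n(X,\ZZ)$, so the analytification of the algebraic $J(X)$ is canonically the analytic intermediate Jacobian; the only extra input is that a complex abelian variety is recovered from its analytification.

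For part $(2)$ I would argue purely analytically using $(1)$. Writing $J^{\an}=\mathrm H^n(X,\RR)/\mathrm H^n(X,\ZZ)$ as a complex torus $V/\Lambda$ with $\Lambda=\mathrm H^n(X,\ZZ)$, the standard computation of the cohomology of a complex torus gives canonical identifications $\mathrm H_1(J,\ZZ)=\Lambda=\mathrm H^n(X,\ZZ)$ and $\mathrm H^1(J,\ZZ)=\Hom(\mathrm H_1(J,\ZZ),\ZZ)$. The complex structure defining $J$ realises the weight-$(-1)$ Hodge structure on $\mathrm H_1(J,\ZZ)$ as $\mathrm H^n(X,\ZZ(m+1))$ (its Hodge types are $(0,-1)$ and $(-1,0)$, as one checks from the types $(m+1,m),(m,m+1)$ of $\mathrm H^n(X)$). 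Dualising and then invoking Poincaré duality, which supplies a perfect pairing $\mathrm H^n(X,\ZZ)\otimes\mathrm H^n(X,\ZZ)\to\ZZ(-n)$ and hence an isomorphism $\mathrm H^n(X,\ZZ)^\vee\cong\mathrm H^n(X,\ZZ(n))$, the Tate twists combine as $n-(m+1)=m$ (recall $n=2m+1$) to produce the desired isomorphism $\mathrm H^1(J,\ZZ)\cong\mathrm H^n(X,\ZZ(m))$ of weight-$1$ Hodge structures. Finally I would check that the principal polarisation of $J$, induced by construction from the cup product on $\mathrm H^n(X,\ZZ)$, corresponds under this isomorphism to the cup-product polarisation, so that the isomorphism respects polarisations.

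For part $(3)$ I would produce the isomorphism relatively and then specialise, so that Galois-equivariance is automatic. Let $f\colon\mathcal X\to\Hilb_{T,\QQ}$ be the universal complete intersection and $\pi\colon\mathcal J\to\Hilb_{T,\QQ}$ Deligne's abelian scheme. The construction provides a canonical isomorphism of lisse $\ZZ_\ell$-sheaves $R^1\pi_*\ZZ_\ell\cong R^nf_*\ZZ_\ell(m)$ on $\Hilb_{T,\QQ}$, compatible after base change to $\CC$, via Artin comparison, with the Hodge-theoretic isomorphism of $(2)$; by the $\PGL_{n+c+1}$-invariance used to obtain \eqref{eqn:int_jac_stacks} this descends to $\mathcal C_{T,\QQ}$. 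Pulling back along the moduli map $\Spec K\to\mathcal C_{T,\QQ}$ attached to $X$ and passing to the geometric point over $\bar K$ yields $\mathrm H^n(X_{\bar K},\ZZ_\ell(m))\cong\mathrm H^1(J(X)_{\bar K},\ZZ_\ell)$; being the pullback of a morphism of lisse sheaves on a $K$-scheme, it is $\Gal(\bar K/K)$-equivariant.

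I expect the main obstacle to be part $(3)$. The pointwise isomorphism over $\bar K$ is easy: for $K$ finitely generated one chooses an embedding $\bar K\hookrightarrow\CC$ and uses smooth proper base change and Artin comparison to reduce to $(2)$. But that route destroys the Galois action, so the crux is to exhibit the comparison as the restriction of a genuine morphism of étale $\ZZ_\ell$-sheaves defined over $\QQ$, and to verify that Deligne's construction furnishes it at integral coefficients with the correct Tate twist $m$, rather than merely rationally or fibrewise.
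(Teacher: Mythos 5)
Your proposal is correct and takes essentially the same route as the paper: the paper's entire proof is a citation to Deligne \cite[\S 2]{Del72} (e.g.~Thm.~2.12), whose construction of the abelian scheme over $\Hilb_{T,\QQ}$ comes equipped with precisely the Betti and $\ell$-adic comparison isomorphisms (integrally, with the twist $m$, and $\PGL_{n+c+1}$-equivariantly) that you unwind. Your Tate-twist bookkeeping in part $(2)$ and the lisse-sheaf/descent argument for Galois equivariance in part $(3)$ are the standard verifications underlying that citation, and the point you flag as the crux is exactly what Deligne's theorem supplies.
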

\begin{proof}
	This follows from  the results of \cite[\S 2]{Del72} (e.g~\cite[Thm.~2.12]{Del72}).
\end{proof}

The separatedness of $\mathcal C_T$ implies that the intermediate Jacobian defines a separated morphism of stacks.
\begin{lemma}\label{lem:separatedness_of_J}
	Let $T$ be a type of Hodge level $1$. Then the morphism of stacks
	$$J: \quad \mathcal{C}_{T,\QQ} \to \mathcal{A}_{g(T),1, \QQ}$$
	is separated.
\end{lemma}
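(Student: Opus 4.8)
The plan is to deduce separatedness of $J$ from the fact that the target $\mathcal{A}_{g(T),1,\QQ}$ is separated together with separatedness of the source $\mathcal{C}_{T,\QQ}$. The key general principle is the following standard fact about morphisms of stacks: if $f : \mathcal{X} \to \mathcal{Y}$ is a morphism of stacks over a base, and the composite $\mathcal{X} \to \mathcal{Y} \to \mathrm{pt}$ (i.e.\ the structure morphism of $\mathcal{X}$) is separated, then $f$ is separated. This follows from the cancellation property: the diagonal $\Delta_f : \mathcal{X} \to \mathcal{X} \times_{\mathcal{Y}} \mathcal{X}$ factors the (separated, hence in particular the diagonal is a closed immersion or at least proper) absolute diagonal appropriately. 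More precisely, since $\mathcal{Y}$ is separated, its diagonal is proper, and one uses the factorisation of $\Delta_{\mathcal{X}/\mathrm{pt}}$ through $\mathcal{X} \times_{\mathcal{Y}} \mathcal{X}$ to conclude that $\Delta_f$ is proper once $\Delta_{\mathcal{X}}$ is.

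First I would record that by Proposition \ref{prop:moduli_stack}(2), the stack $\mathcal{C}_{T}$ is separated over $\ZZ$, provided $T \neq (2;n)$; but a type $T$ of Hodge level $1$ is never a quadric hypersurface (those have Hodge level $-\infty$ or $0$ by the classification in Section \ref{sec:classification}), so this hypothesis is automatically satisfied here, and hence $\mathcal{C}_{T,\QQ}$ is separated over $\QQ$. Second, $\mathcal{A}_{g(T),1,\QQ}$ is separated over $\QQ$ (it is even a separated Deligne--Mumford stack, being the moduli stack of principally polarised abelian varieties). Then I would invoke the cancellation lemma: given the commutative triangle with $\mathcal{C}_{T,\QQ} \xrightarrow{J} \mathcal{A}_{g(T),1,\QQ} \to \Spec \QQ$, where the composite is separated and the second arrow is separated, the first arrow $J$ is separated. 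This is the standard valuative-criterion cancellation argument, valid for algebraic stacks.

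The argument is essentially formal, so there is no serious obstacle; the main point is simply to cite the correct cancellation statement for separatedness of morphisms of stacks (as in \cite[\S7]{LMB}) and to verify that the hypothesis $T \neq (2;n)$ of Proposition \ref{prop:moduli_stack} holds automatically for types of Hodge level $1$. I would phrase the proof concretely via the diagonal: the diagonal $\Delta_J : \mathcal{C}_{T,\QQ} \to \mathcal{C}_{T,\QQ} \times_{\mathcal{A}_{g(T),1,\QQ}} \mathcal{C}_{T,\QQ}$ is obtained by base change from the separated (hence proper-diagonal) absolute diagonal of $\mathcal{C}_{T,\QQ}$ along the monomorphism $\mathcal{C}_{T,\QQ} \times_{\mathcal{A}_{g(T),1,\QQ}} \mathcal{C}_{T,\QQ} \to \mathcal{C}_{T,\QQ} \times_{\QQ} \mathcal{C}_{T,\QQ}$, so $\Delta_J$ is proper, which is exactly the statement that $J$ is separated.
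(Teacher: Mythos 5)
Your overall route is the same as the paper's: the paper's proof consists of citing Proposition \ref{prop:moduli_stack}(2) for separatedness of $\mathcal{C}_{T,\QQ}$, the fact that $\mathcal{A}_{g(T),1,\QQ}$ has separated diagonal, and the cancellation lemma for separatedness of morphisms of algebraic stacks \cite[Tag 050M]{stacks-project}. Your first two paragraphs, including the check that a type of Hodge level $1$ is never $(2;n)$ (quadrics have Hodge level $-\infty$ or $0$), carry out exactly this argument and are correct as long as the cancellation lemma is invoked as a black box.

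However, the ``concrete'' justification in your last paragraph --- which you say is how you would actually phrase the proof --- is wrong in the stack setting. The projection $q : \mathcal{C}_{T,\QQ} \times_{\mathcal{A}_{g(T),1,\QQ}} \mathcal{C}_{T,\QQ} \to \mathcal{C}_{T,\QQ} \times_{\QQ} \mathcal{C}_{T,\QQ}$ is a base change of $\Delta_{\mathcal{A}_{g(T),1,\QQ}/\QQ}$, and this is \emph{not} a monomorphism: the diagonal of an algebraic stack is a monomorphism only when the stack is an algebraic space, and $\mathcal{A}_{g,1}$ has non-trivial inertia (every principally polarised abelian variety carries the automorphism $-1$). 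For the same reason, the square with $\Delta_J$ on top and $\Delta_{\mathcal{C}_{T,\QQ}/\QQ}$ on the bottom is not $2$-cartesian: the $2$-fibre product $\bigl(\mathcal{C}_{T,\QQ} \times_{\mathcal{A}} \mathcal{C}_{T,\QQ}\bigr) \times_{\mathcal{C}_{T,\QQ} \times_{\QQ} \mathcal{C}_{T,\QQ}} \mathcal{C}_{T,\QQ}$ is equivalent to $\mathcal{C}_{T,\QQ} \times_{J,\mathcal{A}} I_{\mathcal{A}/\QQ}$, where $I_{\mathcal{A}/\QQ}$ is the inertia stack, and this is strictly larger than $\mathcal{C}_{T,\QQ}$. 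So $\Delta_J$ is \emph{not} a base change of the absolute diagonal; this is precisely where the scheme-theoretic argument breaks for stacks. The correct concrete argument runs in the other direction: one has the factorisation $\Delta_{\mathcal{C}_{T,\QQ}/\QQ} = q \circ \Delta_J$, the morphism $q$ is separated (being a base change of $\Delta_{\mathcal{A}/\QQ}$, which is separated), and $\Delta_{\mathcal{C}_{T,\QQ}/\QQ}$ is proper by Proposition \ref{prop:moduli_stack}(2); cancellation for proper morphisms (proper composite and separated second factor) then yields that $\Delta_J$ is proper, i.e.\ that $J$ is separated. This is exactly the content of the lemma the paper cites.
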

\begin{proof} The stack
$\mathcal C_{T,\QQ}$ is separated (Proposition \ref{prop:moduli_stack}.(2)), and $\mathcal A_{g(T),1,\QQ}$  has separated diagonal. Hence the separatedness of $J$ follows from \cite[Tag 050M]{stacks-project}. %Lem.~79.4.12.(3)
\end{proof}

\begin{remark}  Deligne has conjectured \cite[\S 3.3]{Del72} that  the intermediate Jacobian of a smooth complete intersection of Hodge level $1$ may also 
	be defined over any field of positive characteristic. We will not require such constructions in this paper (see however Lemma \ref{lem: int Jac has GR}).
\end{remark}

\subsection{A Torelli theorem} \label{Sec:Torelli}
In their famous paper \cite{CG72}, Clemens and Griffiths
proved the global Torelli theorem for cubic threefolds. This work started a flurry
of activity, with other Torelli theorems now known for odd-dimensional intersections of two quadrics
\cite[Cor.~3.4]{Don80} and odd-dimensional intersections of three quadrics \cite[Cor.~4.5]{Deb89}. 
The state-of-the-art concerning global Torelli theorems for cubic fivefolds, quartic threefolds
 and intersections of a quadric with a cubic is however currently much less advanced.

Nowadays, there are many different types of Torelli theorems in the literature (see e.g.~\cite{Catanese}). The main result in this section
is what we have dubbed a ``quasi-finite Torelli theorem'',
which states that there are only finitely many smooth complete intersections (excluding
some special cases) with a given polarised $\ZZ$-Hodge structure. Szendr{\H{o}}i has proved a similar finiteness statement for Calabi-Yau threefolds \cite[Thm.~4.2]{Szendroi}. 
  
\begin{theorem}[Quasi-finite Torelli theorem] \label{thm:qf_torelli}
	Let $\mathrm H$ be a polarised $\ZZ$-Hodge structure and $T$ a type.
	Assume that $T \neq (3;2)$ nor $(2,2;n)$ if $n$ is even.
	Then the set of linear isomorphism classes of smooth complete
	intersections $X$ of type $T$ over $\CC$ for which there is  an isomorphism of polarised $\ZZ$-Hodge structures
	$$\mathrm H^n_{\prim}(X,\ZZ) \cong \mathrm H,$$
 	is finite. 
\end{theorem}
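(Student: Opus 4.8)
The plan is to realise the set in the statement as a single fibre of a period map, and then to prove that this fibre is finite by combining Flenner's infinitesimal Torelli theorem with the definability (o-minimality) of period maps. First I would dispose of the case $T=(2;n)$ separately: over $\CC$ any two smooth quadrics of the same dimension are projectively equivalent, so there is a unique linear isomorphism class and the assertion is trivial. Having excluded $T=(2;n)$ as well, Proposition \ref{prop:moduli_stack} supplies a smooth quasi-projective $\CC$-scheme $U:=U_{T,\CC}$ together with an \'etale surjection $U\to \mathcal C_{T,\CC}$, and I would work on $U$, where the universal family is an honest family of complete intersections.

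On $U$ the primitive cohomology of the universal family is a polarised variation of $\ZZ$-Hodge structure whose fibres are all modelled on the same lattice-with-form $(\Lambda,Q):=(\mathrm H^n_{\prim}(X,\ZZ),\ \text{cup product})$; this is independent of $X$ since $\Hilb_T$, and hence $\mathcal C_{T,\CC}$, is connected. Let $D$ be the corresponding period domain and $\Gamma=\Aut(\Lambda,Q)$ the arithmetic group acting properly discontinuously on it. The variation then gives a holomorphic period map $\mathcal P\colon U^{\an}\to \Gamma\backslash D$, and by construction two points of $U$ have the same image if and only if the complete intersections they parametrise have isomorphic polarised Hodge structures. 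If $(\Lambda,Q)$ is not isometric to the lattice underlying $\mathrm H$ the set in the statement is empty; otherwise, choosing an isometry, this set is exactly the image in the set of linear isomorphism classes of the single fibre $\mathcal P^{-1}([\mathrm H])$, and since $U\to\mathcal C_{T,\CC}$ is surjective on $\CC$-points it suffices to show that $\mathcal P^{-1}([\mathrm H])$ is finite.

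Because $U\to\mathcal C_{T,\CC}$ is \'etale, the tangent space to $U$ at a point is the deformation space $\mathrm H^1(X,T_X)$, and the differential of $\mathcal P$ there is the cup-product map of the infinitesimal variation of Hodge structure. Flenner's infinitesimal Torelli theorem \cite{Fl86} asserts that this map is injective for every smooth complete intersection of type $T$, precisely because we have excluded $T=(3;2)$ and $T=(2,2;n)$ with $n$ even (together with the quadrics already dealt with). Thus $\mathcal P$ is an immersion, and in particular all of its fibres are discrete.

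The hard part will be upgrading ``discrete'' to ``finite'': a discrete closed analytic subset of a quasi-projective variety need not be finite (for instance the zero locus of $\sin$ on the affine line). This is exactly where I would invoke the tameness of period maps. By the theorem of Bakker, Brunebarbe and Tsimerman, $\Gamma\backslash D$ carries a canonical definable structure in the o-minimal structure $\RR_{\mathrm{an},\exp}$ for which $\mathcal P$ is definable; the fibre $\mathcal P^{-1}([\mathrm H])$ is then a definable set, and a definable set that is discrete is finite. (Equivalently, one may use their algebraicity theorem, the Griffiths conjecture: $\mathcal P$ factors as an algebraic morphism $U\to V$ to a quasi-projective variety followed by an injection $V\hookrightarrow\Gamma\backslash D$, and since $\mathcal P$ is immersive this morphism is quasi-finite, hence of finite fibres.) Finiteness of $\mathcal P^{-1}([\mathrm H])$ then gives the theorem. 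I would finally remark that for the coarse statement proved here the faithful action of automorphisms on cohomology is not needed; it is what allows one to upgrade this finiteness to the quasi-finiteness of the corresponding morphism of stacks.
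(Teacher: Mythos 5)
Your proposal is correct, and it follows the paper's skeleton for most of the way: set aside quadrics, pass to the \'etale atlas $U = U_{T,\CC}$ from Proposition \ref{prop:moduli_stack}, form the period map on $U^{\an}$, and use Flenner's infinitesimal Torelli theorem to make it immersive, so that the relevant fibre is discrete. The genuine divergence is in the step you correctly single out as the hard one, namely upgrading discreteness to finiteness. The paper does this with classical tools: by a theorem of Griffiths there is a partial compactification $U \subset U'$ and a \emph{proper} extension $p' \colon U'^{,\an} \to \Gamma\backslash D$ of the period map, and Stein factorization of $p'$ then exhibits $p$ restricted to $U^{\an}$ as an open immersion followed by a finite morphism, so all fibres are finite. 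You instead invoke tameness of period maps: definability of the period map in $\RR_{\mathrm{an},\exp}$ (note the attribution: definability is Bakker--Klingler--Tsimerman, while Bakker--Brunebarbe--Tsimerman is the o-minimal GAGA/Griffiths-conjecture paper you correctly cite for the algebraicity variant), after which a discrete definable set is finite. Both routes work; the paper's has the virtue of resting only on results available when it was written, whereas yours is shorter and, in the algebraicity variant, directly yields quasi-finiteness of the period map as an algebraic morphism. One further difference actually works in your favour: you take $\Gamma = \Aut(\Lambda,Q)$ to be the full arithmetic group, so the set in the statement is literally a single fibre of $\mathcal{P}$, whereas the paper takes $\Gamma$ to be the monodromy group, for which isomorphic polarised Hodge structures need not have the same image in $\Gamma\backslash D$. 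Two points to tighten in your write-up: (i) as in the paper, you should first pass to a torsion-free finite-index subgroup of $\Gamma$ (Selberg's lemma) and to a corresponding finite \'etale cover of $U$ before calling $\mathcal{P}$ an immersion of analytic spaces, or else phrase the local Torelli input directly as discreteness of fibres of local lifts to $D$, using proper discontinuity of the $\Gamma$-action; (ii) the tangent space of $\mathcal{C}_{T,\CC}$ at $X$ is only a subspace $V \subseteq \mathrm{H}^1(X,\Theta_X)$, not the whole space, but this is harmless since Flenner's injectivity on $\mathrm{H}^1(X,\Theta_X)$ restricts to $V$.
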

\begin{proof}
 We may  assume that $T \neq (2;n)$ (as all smooth quadrics of the same dimension over $\CC$ are isomorphic).
By Proposition \ref{prop:moduli_stack}, there exist a smooth quasi-projective variety $U$ over $\CC$ and a polarised family $f:Y\to U$ of smooth complete intersections of type $T$ over $U$ such that the induced  moduli map $U\to \mathcal C_{T,\CC}$ is \'etale and, for all smooth complete intersections $X$ of type $T$ over $\CC$, the set \[\{u\in U(\CC) \ : \ Y_{u} \cong X\}\] is non-empty and finite. Explicitly, we choose $U_{T}$ over $\ZZ[1/6]$ as in Proposition \ref{prop:moduli_stack}, take $U$ to be $U_{T,\CC}$ and let $f:Y\to U$ be the pull-back of the universal family over the stack $\mathcal C_{T,\CC}$. Let  $p:U^{\an}\to \Gamma\backslash D$ be the period map associated to
	$f$ and to some base-point $u \in U(\CC)$, where $D$ is the  period domain defined by the polarised $\ZZ$-Hodge structure $\mathrm{H}^n_{\prim}(Y_u, \ZZ)$ and $\Gamma$ is the monodromy group of the polarised family of complete intersections $f:Y\to U$; see \cite[Sect.~4.3-4.4]{Mullerstach} or \cite[Ch.~10]{Voi07} for a detailed treatment of the construction of $D$ and $p$. To prove the theorem, it suffices to show that the fibres of $p$ are finite.

By Selberg's lemma (see  \cite[Thm.~II]{Cassels} or \cite[Lem.~8]{Selberg}), replacing $U$ by an \'etale covering if necessary, we may  assume that $\Gamma$ acts freely on $D$.

Since $\Gamma$ acts freely on $D$, by the infinitesimal Torelli theorem for smooth complete intersections of type $T$ \cite[Thm.~3.1]{Fl86},  the period map $U^{\an}\to \Gamma\backslash D$ is an immersion of complex analytic spaces (here we use  that $T \neq (3;2), (2,2;n)$ with $n$ even, and that $U\to \mathcal C_{T,\CC}$ is \'etale).  By a theorem of Griffiths (see \cite[p.~122]{G}   or \cite[Cor.~13.4.6]{Mullerstach}), there exist a smooth quasi-projective  variety $U^\prime$ over $\CC$, an open immersion $U\to U^\prime$ of schemes and a proper morphism of complex analytic spaces $p^ \prime: U^{\prime,\an}\to D/\Gamma$ extending the period map $p:U^{\an}\to D/\Gamma$. By Stein factorization for proper morphisms of complex analytic spaces \cite[Ch.~10.6.1]{GrauertRemmert}, there exist a proper surjective morphism of complex analytic spaces $p_0:U^{\prime,\an}\to U_0$ with connected fibres, and a finite morphism of complex analytic spaces $U_0\to D/\Gamma$ such that $p^\prime$ factorises as \[\xymatrix{ U^{\prime,\an} \ar[rr]^{ p^\prime} \ar[dr]_{p_0} & & D/\Gamma \\ & U_0. \ar[ur] & } \]  
As the restriction of $p^\prime$ to $U^{\an}$ is an immersion, we conclude that $p_0$ is an isomorphism
when restricted to $U^{\an}$. In particular, the period map $p$ factors as $U^{\an} \subset U_0 \to  D/\Gamma$,
hence has finite fibres.
\end{proof}

We now specialise this result to the case of complete intersections
of Hodge level $1$, where we have the theory of the intermediate Jacobian (see \S \ref{sec:int_jac}). 
\begin{corollary}\label{cor:Jacobian torelli}
	Let $T$ be a type of Hodge level $1$, let $K$ be an algebraically closed field of characteristic $0$ and let
	$A$ be a principally polarised abelian variety over $K$. 
	Then there are only finitely many $K$-isomorphism classes of
	smooth complete intersections over $K$ of type $T$,
	whose intermediate Jacobian is isomorphic to $A$ as a principally polarised abelian
	variety.
\end{corollary}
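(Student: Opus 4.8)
The plan is to reduce to the case $K=\CC$, where the result will follow from the quasi-finite Torelli theorem (Theorem~\ref{thm:qf_torelli}) combined with the cohomological description of the intermediate Jacobian (Lemma~\ref{lem: int Jac}). Throughout, let $n$ denote the (necessarily odd) dimension of complete intersections of type $T$ and set $m=(n-1)/2$. Since every $K$-isomorphism class contains a $K$-linear isomorphism class with the same intermediate Jacobian, it suffices to bound the set of $K$-linear isomorphism classes, which is what the available tools control.

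First I would treat $K=\CC$. Suppose $X$ is a smooth complete intersection of type $T$ over $\CC$ with $J(X)\cong A$ as principally polarised abelian varieties. Such an isomorphism induces an isomorphism $\mathrm H^1(J(X),\ZZ)\cong \mathrm H^1(A,\ZZ)$ of polarised $\ZZ$-Hodge structures, so Lemma~\ref{lem: int Jac}.(2) gives an isomorphism $\mathrm H^n(X,\ZZ(m))\cong \mathrm H^1(A,\ZZ)$ of polarised $\ZZ$-Hodge structures. Untwisting, and using that $n$ is odd so that $\mathrm H^n_{\prim}(X,\ZZ)=\mathrm H^n(X,\ZZ)$, yields $\mathrm H^n_{\prim}(X,\ZZ)\cong \mathrm H$, where $\mathrm H:=\mathrm H^1(A,\ZZ)(-m)$ is a fixed polarised $\ZZ$-Hodge structure independent of $X$. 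Because $T$ has Hodge level $1$ its dimension $n$ is odd, so $T$ is neither $(3;2)$ nor $(2,2;n)$ with $n$ even; hence the hypotheses of Theorem~\ref{thm:qf_torelli} are automatically satisfied, and that theorem shows there are only finitely many linear isomorphism classes of such $X$. This settles $K=\CC$.

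For general algebraically closed $K$ of characteristic $0$ I would argue by contradiction via a spreading-out argument. If the set of $K$-linear isomorphism classes were infinite, choose countably many pairwise linearly non-isomorphic representatives $X_1,X_2,\ldots$ together with isomorphisms $J(X_i)\cong A$. All of these data are defined over a countably generated subfield $k_0\subseteq K$; replacing $k_0$ by its algebraic closure inside $K$ we may assume $k_0$ is algebraically closed and countable, and therefore admits an embedding $k_0\hookrightarrow \CC$. Since $T\neq(2;n)$, the scheme $\Isom_{k_0}(X_i,X_j)$ is finite over $k_0$ by Lemma~\ref{lem: isomscheme}; as a finite-type scheme over the algebraically closed field $k_0$ it is non-empty if and only if it acquires a point over the algebraically closed extension $K$, so the $X_i$ remain pairwise linearly non-isomorphic over $k_0$, and hence, after base change along $k_0\hookrightarrow\CC$, over $\CC$. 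Because the construction of $J$ in \eqref{eqn:int_jac_stacks} is functorial and respects base change, we obtain infinitely many pairwise linearly non-isomorphic smooth complete intersections $X_{i,\CC}$ of type $T$ over $\CC$ with $J(X_{i,\CC})\cong A_\CC$, contradicting the case $K=\CC$ already established.

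The genuinely hard input, namely finiteness over $\CC$, is supplied by Theorem~\ref{thm:qf_torelli}, so within this corollary the main point to get right is the transfer from $\CC$ to an arbitrary algebraically closed field. I expect the only real subtlety to be guaranteeing that linearly non-isomorphic complete intersections over $K$ stay non-isomorphic after descending to $k_0$ and base changing to $\CC$; this is exactly what the finiteness of the $\Isom$-scheme in Lemma~\ref{lem: isomscheme} delivers, together with the elementary fact that a finite-type scheme over an algebraically closed field is non-empty if and only if it has a point over some algebraically closed extension. One must also invoke the compatibility of the intermediate Jacobian with base change, which is built into its definition as a morphism of stacks over $\QQ$.
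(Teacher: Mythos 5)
Your proposal is correct and takes essentially the same route as the paper: over $\CC$ you argue exactly as the paper does, combining Lemma \ref{lem: int Jac} with untwisting by $(-m)$, the identity $\mathrm H^n_{\prim}(X,\ZZ)=\mathrm H^n(X,\ZZ)$ for odd $n$, and Theorem \ref{thm:qf_torelli}. Your spreading-out argument via the finiteness (indeed, just the finite-type representability) of $\Isom$-schemes and base-change compatibility of $J$ is precisely the ``standard Lefschetz principle type argument'' which the paper invokes for the reduction to $\CC$ without spelling it out.
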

\begin{proof} 
	By a standard Leftschetz principle type argument, it suffices to prove the result when $K=\CC$.
	Let $\mathrm H = \mathrm{H}^1(A,\ZZ)$ viewed as a polarised $\ZZ$-Hodge structure. If $J(X)$ is isomorphic to $A$, then Lemma \ref{lem: int Jac}
	implies that there is an isomorphism of polarised $\ZZ$-Hodge structures
	\[\mathrm H^n(X,\ZZ(m)) \cong \mathrm H^1(J(X),\ZZ)\cong \mathrm H,\] where $m= (n -1)/2$. Therefore, by the well-known correspondence between polarised $\ZZ$-Hodge structures of type $(-1,0) +(0,-1)$ and polarised abelian varieties, the result follows from Theorem \ref{thm:qf_torelli} on noting that 
	\[\mathrm H(-m) \cong \mathrm H^n(X,\ZZ) = \mathrm{H}^n_{\prim}(X,\ZZ).\qedhere\]
\end{proof}

\begin{remark}
	Note that Theorem \ref{thm:qf_torelli} is ``sharp'', as
	any two cubic surfaces or any two intersections of two quadrics of the same
	even dimension over $\CC$ have isomorphic Hodge structures, respectively.
\end{remark}

%\begin{remark} Let $H$ be a polarized $\ZZ$-Hodge structure and let $h$ be a polynomial of degree $n$. A   smooth projective connected variety $X$ over $\CC$ is  \emph{Calabi-Yau} if the canonical bundle $\omega_X$ is trivial and  $\mathrm{H}^i(X,\mathcal O_X) = 0$ for all $0< i < \dim X$. Note that the techniques used in the proof of Theorem \ref{thm:qf_torelli} can be used to show the following well-known generalization of Szendroi's finiteness result \cite[Thm.~4.2]{Szendroi}: the set of isomorphism classes of  polarized    Calabi-Yau varieties $(X,L)$ with Hilbert polynomial $h$ and Hodge structure $\mathrm{H}^{n}_{\prim}(X,\ZZ) \cong H$ is finite. (Indeed, Calabi-Yau varieties have unobstructed deformations by Tian-Todorov (see \cite{Kawamata}), satisfy the infinitesimal Torelli theorem \cite[Ex. 5.6.2]{Mullerstach} and the period map can be compactified \cite[Cor.~13.4.6]{Mullerstach}.)
%\end{remark}

\subsection{Induced automorphisms on cohomology}\label{sec:automorphisms_faithful}
 The aim of this section is to show that the automorphism group of a smooth complete intersection
acts faithfully on its cohomology, under suitable conditions. Analogues
of this result are known for many other classes of varieties; for example
for curves of genus at least two, abelian varieties \cite[Thm.~3, p.~176]{MumAb}, K3 surfaces  \cite[Prop.~VIII.11.3]{BHPV},
varieties with very ample canonical bundle \cite[p.~37]{PoppI}, certain hyperk\"ahler manifolds \cite[Prop.~10]{Beauville2} 
and certain surfaces \cite{Peters}.

We begin with a lemma on actions of inertia groups of stacks on tangent spaces.  For the definition of the inertia group $I_x$ of a geometric point $x$ of an algebraic stack $\mathcal C$, see \cite[Tag 036X]{stacks-project}, \cite[Tag 050P]{stacks-project} or \cite[\S 2.1]{NoohiIn}. The following lemma is a minor variant of \cite[Prop.~4.4]{EHKV} and is presumably known to the authors of \emph{loc.~cit.}; we give a proof for completeness.

\begin{lemma}\label{lem:hartmann}
	Let $k$ be a field of characteristic $0$. Let $\mathcal C$ be a smooth irreducible finite type separated Deligne-Mumford stack over $k$ whose generic inertia group is trivial. Let $x \in \mathcal C$ be a geometric point  with inertia group $I_x$. Then $I_x$ acts faithfully on the tangent space of $\mathcal C$ at $x$.
\end{lemma}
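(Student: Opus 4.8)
The plan is to reduce the statement to a purely local computation on a smooth cover, where the stack becomes a quotient of a smooth scheme by the inertia group $I_x$, and then to use the characteristic-zero linearisation of the action to identify a non-trivial element fixing the tangent space with a non-trivial element fixing an open neighbourhood, contradicting triviality of the generic inertia. First I would invoke the Deligne--Mumford hypothesis, which guarantees that $I_x$ is a finite group scheme over the residue field of $x$; since $k$ has characteristic $0$, $I_x$ is reduced, hence a finite (abstract) group. Next, because $\mathcal{C}$ is smooth and Deligne--Mumford, I would pass to an \'etale presentation and use the local structure theorem for Deligne--Mumford stacks in characteristic $0$ (the ``slice theorem''): \'etale-locally around $x$, the stack $\mathcal{C}$ is isomorphic to a quotient stack $[V/I_x]$, where $V$ is a smooth $k$-scheme on which $I_x$ acts with a fixed point $v$ mapping to $x$, and where the tangent space $T_x\mathcal{C}$ is identified $I_x$-equivariantly with $T_v V$.

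With this local model in hand, suppose for contradiction that some $g \in I_x$ acts trivially on $T_x\mathcal{C} \cong T_vV$. The key step is then to promote triviality of the action on the tangent space to triviality of the action on a whole neighbourhood. In characteristic $0$ one can linearise the action of the finite group $\langle g \rangle$ near the fixed point $v$: by Cartan's linearisation lemma (or its algebraic analogue via averaging over the finite group, valid since $|\langle g\rangle|$ is invertible in $k$), there is an $\langle g\rangle$-equivariant \'etale-local identification of $(V,v)$ with its tangent space $(T_vV, 0)$ on which $g$ acts by its derivative $dg = \mathrm{id}$. Hence $g$ acts trivially on an \'etale neighbourhood of $v$, and therefore lies in the inertia of every point of a dense open substack of $\mathcal{C}$. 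This contradicts the assumption that the generic inertia of $\mathcal{C}$ is trivial, using that $\mathcal{C}$ is irreducible (so a dense open is non-empty and meets the generic point). Therefore $g$ must be the identity, proving faithfulness.

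The main obstacle I anticipate is making the slice/linearisation step precise in the algebraic (rather than analytic or differentiable) setting: one must ensure that the local quotient presentation $[V/I_x]$ and the equivariant identification of $T_vV$ with the neighbourhood are genuinely available for smooth separated Deligne--Mumford stacks over a characteristic-$0$ field, and that ``trivial action on the tangent space'' really does force ``trivial action \'etale-locally''. This is exactly where the minor variation on \cite[Prop.~4.4]{EHKV} enters, and where the characteristic-$0$ hypothesis is indispensable (in positive characteristic the linearisation can fail and a non-trivial unipotent automorphism may act trivially on the tangent space). I would therefore cite or adapt the local structure results underlying \cite[Prop.~4.4]{EHKV}, and take care that the chosen presentation is compatible with the separatedness of $\mathcal{C}$ so that $I_x$ is finite and the averaging argument applies.
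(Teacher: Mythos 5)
Your proposal is correct, and its core mechanism is the same as the paper's: a local quotient presentation $[V/I_x]$ of $\mathcal C$ near $x$ (the paper gets this from Olsson's theorem on \'etale covers by quotient stacks, you from the ``slice theorem'' --- these are the same tool), plus the characteristic-zero fact that a finite-order automorphism of a smooth local model at a fixed point can be linearised/diagonalised. Where you genuinely diverge is in the logical organisation: the paper consumes the trivial-generic-inertia hypothesis by citing \cite[Prop.~4.4]{EHKV} as a black box, obtaining faithfulness of $I_x$ on some jet space $J^r_x$, and then only needs the diagonalisation (via roots of unity acting on uniformising parameters, quoted from Hartmann) to downgrade $r$ to $1$: if $\sigma$ were trivial on $\mathfrak m/\mathfrak m^2$, all eigenvalues $\zeta_i$ would equal $1$ and $\sigma$ would be trivial on every jet space, contradicting EHKV. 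You instead bypass EHKV entirely and re-derive the contradiction yourself: trivial action on the tangent space, plus linearisation, forces trivial action on an \'etale neighbourhood, hence a non-trivial element of the inertia over a dense open, contradicting trivial generic inertia. Your route is more self-contained (it effectively inlines the characteristic-zero case of EHKV's proposition), at the cost of having to verify a few extra points the paper gets for free from its citations: that the \'etale quotient presentation can be chosen with $v$ an honest $I_x$-fixed point, that representable \'etale morphisms are injective on inertia (so your non-trivial stabiliser element really lives in the inertia of $\mathcal C$), and that the image of your neighbourhood is open and dense by irreducibility. None of these is a gap --- they are all standard and you correctly flagged the linearisation and slice steps as the technical crux --- so both proofs are sound; the paper's is shorter given its references, yours is more transparent about where the generic-inertia hypothesis and the characteristic-zero hypothesis actually enter.
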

\begin{proof}
	We may assume that $k$ is algebraically closed.
	By \cite[Prop.~4.4]{EHKV}, the group $I_x$ acts faithfully on the $r$th jet space $J^r_x$ of $\mathcal{C}$ at $x$, for some $r\geq 1$.
	To prove the lemma, it suffices to show that one may take $r=1$ in \emph{loc.~cit}.~when $\mathrm{char}(k) = 0$ (this does not hold
	  in positive characteristic; see \cite[Ex.~4.5]{EHKV}).

By \cite[Thm.~2.12]{Olsson} there exist a smooth affine scheme $U$ over $k$ endowed with an action of $I_x$ and a representable \'etale  morphism $\iota_x: [U/I_x]\to \mathcal C$ such that the image of $\iota_x$ contains $x$. The morphism $\iota_x$ induces  natural $I_x$-equivariant isomorphisms of jet spaces. Therefore, to prove the lemma, we may assume that $\mathcal C = [U/I_x]$.

	The morphism $U\to \mathcal C$ is representable and \'etale, thus induces $I_x$-equivariant isomorphisms on jet spaces. 
	So let $P$ be a point of $U$ lying over $x$, let  $\mathfrak{m}$ be the maximal ideal of $\mathcal O_{U,P}$, and let $\sigma \in I_x$.
	As $\mathrm{char}(k) = 0$ and $\mathcal O_{U,P}$ is regular, by \cite[Lem.~7.1]{Hartmann} and \cite[Rem.~7.2]{Hartmann} 
 	there exists a system of uniformising parameters $x_1,\ldots,x_s \in \mathfrak{m}$ and roots of unity $\zeta_1,\ldots,\zeta_s$ such that $\sigma(x_i) = \zeta_i x_i$.
 	As $\sigma$ acts non-trivially on $J^r_x$, it acts non-trivially on $\mathfrak{m}/\mathfrak{m}^{r+1}$. In particular, there is some $i\in \{1,\ldots,s\}$ for which $\zeta_i \neq 1$. It follows that $\sigma$ acts non-trivially
	on $\mathfrak{m}/\mathfrak{m}^2 = \langle \overline{x_1},\ldots, \overline{x_s} \rangle$, as required.  
\end{proof}

We next obtain a simple criterion for the automorphism group of a smooth complete intersection to act faithfully on its cohomology. To prove this, we use Lemma \ref{lem:hartmann} and Flenner's 
infinitesimal Torelli theorem (as used already in Section \ref{Sec:Torelli}).

\begin{proposition}\label{prop:crit_for_rep}
Let $T$ be a type such that there exists a smooth complete intersection of type $T$ over $\CC$ with no non-trivial linear automorphisms. Then for all smooth complete intersections $X$ of type $T$ over $\CC$, the homomorphism \[\Lin(X) \to \Aut( \mathrm{H}^n(X,\CC))\] is injective.
\end{proposition}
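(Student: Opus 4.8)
The plan is to show that a linear automorphism acting trivially on $\mathrm{H}^n(X,\CC)$ must in fact be the identity, by transporting the problem to the moduli stack $\mathcal{C}_{T,\CC}$ and invoking the faithfulness of the inertia action on the tangent space (Lemma \ref{lem:hartmann}). First I would observe that a linear automorphism $\sigma$ of $X$ corresponds precisely to an element of the inertia group $I_x$ of the stack $\mathcal{C}_{T,\CC}$ at the geometric point $x$ classifying $X$; indeed $\Lin(X) = \Isom(X,X)$ is the automorphism group of $X$ as an object of the groupoid $\mathcal{C}_{T,\CC}(\CC)$, which is exactly $I_x$. The hypothesis that some smooth complete intersection of type $T$ has no non-trivial linear automorphisms says that the generic inertia of $\mathcal{C}_{T,\CC}$ is trivial, so Lemma \ref{lem:hartmann} will apply once we verify the hypotheses on the stack.

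Next I would check that $\mathcal{C}_{T,\CC}$ satisfies the running assumptions of Lemma \ref{lem:hartmann}: it is smooth, irreducible, of finite type over $\CC$ by Proposition \ref{prop:moduli_stack}.(1); it is separated and Deligne--Mumford over $\QQ$ (hence over $\CC$) by Proposition \ref{prop:moduli_stack}.(2)--(3), using that the excluded cases are handled automatically (the hypothesis on non-trivial automorphisms forces $T \neq (2;n)$, and any type admitting a complete intersection with trivial automorphism group is covered). With these in place, Lemma \ref{lem:hartmann} yields that $I_x$ acts faithfully on the tangent space $\mathrm{T}_x \mathcal{C}_{T,\CC}$. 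Thus if $\sigma \in \Lin(X)$ acts trivially on this tangent space, then $\sigma$ is the identity.

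The crucial remaining step, and the main obstacle, is to relate the action of $\sigma$ on the tangent space of the stack to its action on the cohomology $\mathrm{H}^n(X,\CC)$. The tangent space to $\mathcal{C}_{T,\CC}$ at $x$ is the space of first-order deformations of $X$ as a complete intersection, which by deformation theory is identified with $\mathrm{H}^0(X, N_{X/\PP^{n+c}})$ modulo the image of the automorphisms of the ambient $\PP^{n+c}$ — equivalently the tangent space to the moduli of complete intersections. Here I would invoke Flenner's infinitesimal Torelli theorem \cite[Thm.~3.1]{Fl86}, exactly as in the proof of Theorem \ref{thm:qf_torelli}: the infinitesimal period map, which sends a first-order deformation to its induced variation of Hodge structure on $\mathrm{H}^n(X,\CC)$, is injective under the stated numerical hypotheses on $T$. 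Since $\sigma$ acts compatibly on deformations and on cohomology (the period map being equivariant for the $\Lin(X)$-action), triviality of the action of $\sigma$ on $\mathrm{H}^n(X,\CC)$ forces triviality of its action on the image of the infinitesimal period map, and hence — by injectivity of this map — triviality of its action on the tangent space $\mathrm{T}_x\mathcal{C}_{T,\CC}$.

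Combining these, the chain reads: $\sigma$ trivial on $\mathrm{H}^n(X,\CC)$ $\Rightarrow$ $\sigma$ trivial on $\mathrm{T}_x\mathcal{C}_{T,\CC}$ (by infinitesimal Torelli) $\Rightarrow$ $\sigma = \mathrm{id}$ (by Lemma \ref{lem:hartmann}), which is precisely the injectivity of $\Lin(X) \to \Aut(\mathrm{H}^n(X,\CC))$. The subtlety I would be most careful about is the precise identification of the stack tangent space with the space on which Flenner's map is defined, and ensuring the $\sigma$-equivariance is correctly tracked through this identification; I would also double-check that the numerical exclusions $T \neq (3;2)$ and $T \neq (2,2;n)$ for $n$ even, where infinitesimal Torelli fails, are either ruled out by the hypothesis or handled separately, matching the exclusions already present in Theorem \ref{thm:qf_torelli}.
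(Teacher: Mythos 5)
Your proposal follows the same route as the paper's proof: identify $\Lin(X)$ with the inertia group of $\mathcal{C}_{T,\CC}$ at the point classifying $X$, apply Lemma \ref{lem:hartmann} to get faithfulness of the inertia action on the tangent space of the stack, and use the $\Lin(X)$-equivariance and injectivity of Flenner's infinitesimal Torelli map to transfer triviality of the action on $\mathrm{H}^n(X,\CC)$ into triviality of the action on that tangent space. This is exactly the paper's chain of reasoning; the paper merely phrases the Flenner step as injectivity of the equivariant map $\mathrm{H}^1(X,\Theta_X)\to \Hom(\mathrm{H}^{p,q}(X),\mathrm{H}^{p-1,q+1}(X))$ together with the observation that the stack tangent space is a $\Lin(X)$-stable subspace $V\subset \mathrm{H}^1(X,\Theta_X)$, which also settles the identification issue you were worried about.

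There is, however, one genuine loose end, and the dichotomy you offer at the end does not resolve it in the way you hope. The type $T=(3;2)$ is \emph{not} ruled out by the hypothesis: a general smooth cubic surface has trivial linear automorphism group, so the proposition's hypothesis holds for $(3;2)$; yet Flenner's infinitesimal Torelli theorem fails for cubic surfaces ($\mathrm{H}^2$ of a cubic surface is pure of type $(1,1)$, so the period map carries no information), and your chain breaks at the Flenner step. This case needs a separate argument: the paper disposes of it by citing the classical fact that linear automorphisms of a smooth cubic surface act faithfully on $\mathrm{H}^2(X,\CC)$ (equivalently, on the Picard lattice), see \cite[Prop.~8.2.31]{Dol12}. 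The other problematic types do behave as you hope: $(2;n)$, $(3;1)$ and $(2,2;n)$ for every $n$ (in particular $n$ even) all carry non-trivial linear automorphisms, so they are excluded by the hypothesis, exactly as the paper observes at the start of its proof. With the cubic surface case patched in by the classical result, your argument is complete and coincides with the paper's.
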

\begin{proof}
	Note that the hypothesis cannot hold if $T = (2;n),(3;1)$ or $(2,2;n)$,
	as such $X$ always have a non-trivial linear automorphism group (in the latter case this  	
	follows from the fact that we can simultaneously diagonalise both quadrics, 
	see e.g.~the proof of Proposition \ref{prop:two_quadrics}). 
	Moreover, the result is well-known when $T=(3;2)$ (see e.g.~\cite[Prop.~8.2.31]{Dol12}).
	We may therefore assume that $T$ is none of these types.

By Flenner's infinitesimal Torelli theorem \cite[Thm.~3.1]{Fl86}, for a smooth complete intersection $X$ of type $T$ the natural
$\Lin(X)$-equivariant homomorphism
\begin{align*}
\mathrm{H}^1(X,\Theta_X) & \to  \Hom(\mathrm{H}^{p,q}(X), \mathrm{H}^{p-1,q+1}(X))
\end{align*}
is injective for some $p,q$, where $\Theta_X$ denotes the tangent bundle of $X$. Therefore to prove the proposition, it suffices to show that the homomorphism
 \[\Lin(X) \to \Aut( \mathrm{H}^1(X,\Theta_X))\] is injective.
 
To do this, recall that $\Lin(X)$ is the inertia group of $\mathcal{C}_{T,\CC}$ at $X$, and that, by deformation theory, the tangent space to $\mathcal{C}_{T,\CC}$ at $X$ is some vector subspace $V \subset \mathrm{H}^1(X,\Theta_X)$ which is stable under the action of $\Lin(X)$. Under our assumptions on $T$, the stack $\mathcal{C}_{T,\CC}$ is a smooth irreducible finite type separated Deligne-Mumford stack (Proposition \ref{prop:moduli_stack}) with trivial generic inertia group. Hence Lemma \ref{lem:hartmann} implies that $\Lin(X)$ acts faithfully on $V \subset \mathrm{H}^1(X,\Theta_X)$, as required.
\end{proof}

In order to apply Proposition \ref{prop:crit_for_rep}, we now show that the assumptions hold in some special cases.

\begin{lemma}\label{lem:d1d2}
Let $T=(d_1,\ldots,d_c;n)$ be a type with $3\leq d_1<d_2\leq \cdots \leq d_c$ and $T \neq (3;1)$. Then the  general smooth complete intersection of type $T$ over $\CC$ has no non-trivial linear automorphisms.
\end{lemma}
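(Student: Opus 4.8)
The plan is to reduce the assertion to the classical case of hypersurfaces by exploiting the strict inequality $d_1 < d_2$. The key observation is that, precisely because $d_i > d_1$ for all $i \geq 2$, the degree-$d_1$ part of the homogeneous ideal $I_X$ of a complete intersection $X$ of type $T$ is one-dimensional: only the generator $f_1$ (up to scalar) can contribute in degree $d_1$. Thus $X$ lies on a \emph{unique} hypersurface $V(f_1) \subset \PP^{n+c}$ of degree $d_1$. Any $g \in \Lin(X)$ sends $I_X$ to $I_{gX} = I_X$, hence preserves the line $(I_X)_{d_1} = \CC\cdot f_1$, so $g$ preserves $V(f_1)$. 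This yields an inclusion of subgroups of $\PGL_{n+c+1}$,
\[
\Lin(X) \subseteq \Lin\bigl(V(f_1)\bigr).
\]
It therefore suffices to show that for the general $X$ the hypersurface $V(f_1)$ has no non-trivial linear automorphisms.

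\textbf{Genericity of $V(f_1)$.} Next I would check that the general complete intersection of type $T$ gives rise to a general hypersurface of degree $d_1$. The parameter scheme $\mathrm{Hilb}_T$ is irreducible (Proposition \ref{prop:moduli_stack}), and the assignment $X \mapsto V(f_1)$ is a well-defined morphism to the space of degree-$d_1$ hypersurfaces in $\PP^{n+c}$, by the uniqueness noted above. This morphism is dominant: starting from a general form $f_1$ (so $V(f_1)$ is smooth), a Bertini-type argument shows that general $f_2,\ldots,f_c$ of degrees $d_2,\ldots,d_c$ cut out a smooth complete intersection of type $T$ inside $V(f_1)$. Consequently the preimage of the locus of automorphism-free degree-$d_1$ hypersurfaces is a dense open subset of $\mathrm{Hilb}_T$, and for every $X$ therein we get $\Lin(X) \subseteq \Lin(V(f_1)) = 1$.

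\textbf{The base case and the main obstacle.} The argument above reduces everything to the statement that the general hypersurface of degree $d \geq 3$ and dimension $\geq 1$ in projective space, \emph{other than a plane cubic}, carries no non-trivial linear automorphism — the classical theorem of Matsumura and Monsky. The case $c=1$ of the lemma \emph{is} this statement (for $T=(d_1;n)$ with $d_1\geq 3$ and $T\neq(3;1)$), and the exclusion $T\neq(3;1)$ is exactly the plane-cubic exception. For $c \geq 2$ one must verify that $V(f_1)$ never falls into the excluded case: since $n\geq 1$ and $c\geq 2$, its dimension is $n+c-1 \geq 2$, so $(\dim V(f_1), d_1)$ is never $(1,3)$, and the classical result applies directly. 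The main obstacle is thus not the reduction itself, which is essentially formal once the uniqueness of $f_1$ is in place, but rather importing the hypersurface base case with the correct bookkeeping of its exceptional cases and confirming that the degree and dimension of $V(f_1)$ always avoid them; the hypothesis $d_1 < d_2$ is precisely what licenses the whole reduction, since with equal lowest degrees the space $(I_X)_{d_1}$ would acquire extra dimensions and $f_1$ would no longer be canonically attached to $X$.
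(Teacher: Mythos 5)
Your proposal is correct and takes essentially the same route as the paper: both exploit $d_1<d_2$ to see that $H^0(X,\mathcal{I}_X(d_1))$ is one-dimensional, so every linear automorphism of $X$ preserves the unique degree-$d_1$ hypersurface containing $X$, and both then reduce to the Matsumura--Monsky theorem for hypersurfaces (with the plane-cubic exception ruled out exactly as you do). The only difference is bookkeeping: the paper fixes a single automorphism-free hypersurface $Z$ and intersects it with a general complete intersection of type $(d_2,\ldots,d_c;n+1)$, whereas you run the Bertini argument in the other direction, showing $X\mapsto V(f_1)$ is dominant and pulling back the automorphism-free locus; these are interchangeable versions of the same genericity step.
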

\begin{proof} 
	In the case of hypersurfaces the result is well-known, see e.g.~\cite[Thm.~1.5]{Poo05} or \cite{MM63}.
	So assume that $c \geq 2$ and  let 
	$Z$ be a smooth hypersurface of type $(d_1;n+1)$ with no non-trivial linear automorphisms. 
	If $Y$ is a general smooth complete
	intersection of type $(d_2,\ldots,d_c;n+1)$, then $X= Z \cap Y$ is a smooth complete
	intersection of type $T$. 
	
	We claim that $\Lin X$ is trivial.
	Indeed, let $\sigma \in \Lin X$. By our assumption on $T$, the vector space $H^0(X, \mathcal{I}_X(d_1))$
	is one-dimensional and is generated by the equation defining $Z$,
	where $\mathcal{I}_X$ denotes the ideal sheaf of $X$.
	Hence $\sigma$ induces an automorphism of $Z$. 
	However by construction $\Lin Z$ is trivial, thus this induced automorphism is trivial, as required.
\end{proof}

\begin{lemma}\label{lem:222n}
Let $T= (2,2,2;n)$ and $n\in \NN$. Then the general smooth complete intersection of type $T$ over $\CC$ has no non-trivial automorphisms.
\end{lemma}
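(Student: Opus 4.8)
The plan is to exploit the net of quadrics cutting out $X$ together with its discriminant curve. Write $V$ for the $(n+4)$-dimensional vector space with $X \subset \PP(V) = \PP^{n+3}$, and let $W \subset \mathrm{Sym}^2 V^\vee$ be the net (three-dimensional subspace) spanned by the three quadrics defining $X$. Since $X$ is a complete intersection of type $(2,2,2;n)$, the Koszul resolution gives $H^0(\PP(V), \mathcal I_X(2)) = W$, so any $g \in \Lin X$, being induced by an element of $\PGL(V)$ preserving the homogeneous ideal of $X$, preserves $W$. Thus $g$ induces $\bar g \in \PGL(W) \cong \PGL_3$ acting on $\PP(W) \cong \PP^2$ and preserving the discriminant locus $\Delta = \{[Q] \in \PP(W) : Q \text{ is singular}\}$, a plane curve of degree $n+4$. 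For a general net the corank-$2$ locus in $\PP(W)$ is empty, since it has expected codimension $\binom{3}{2}=3 > 2$, so $\Delta$ is a smooth plane curve of degree $n + 4 \geq 5$.

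First I would show that $\bar g$ is trivial for a general net. Along $\Delta$ the members of $W$ drop rank by exactly one, and this equips $\Delta$ with a theta characteristic $\theta$ in the classical way; moreover the pair $(\Delta, \theta)$ recovers the net $W$ up to the $\PGL(V)$-action. Since $\bar g$ preserves $\Delta$ together with $\theta$, it defines an automorphism of the pair $(\Delta,\theta)$. The crux of the argument — and the step I expect to be the \emph{main obstacle} — is to show that for a general net the pair $(\Delta, \theta)$, and hence the smooth plane curve $\Delta$ of degree $\geq 4$, admits no nontrivial automorphisms; this should follow from the theory of nets of quadrics and their discriminant curves, using that a general plane curve of degree $\geq 4$ has trivial automorphism group. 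Granting this, $\bar g = \mathrm{id}$.

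It then remains to treat the case $\bar g = \mathrm{id}$, in which $g$ acts on $W$ by a scalar. Rescaling a lift $\tilde g \in \GL(V)$, we may assume $\tilde g^* Q = Q$ for every $Q \in W$, so that $\tilde g$ lies simultaneously in the orthogonal group of each member of the net. Here I would invoke simultaneous diagonalisation of a general pencil, as in the proof of Proposition \ref{prop:two_quadrics}: for a general net we may choose coordinates $x_0,\dots,x_{n+3}$ in which $Q_0 = \sum_i x_i^2$ and $Q_1 = \sum_i \mu_i x_i^2$ with the $\mu_i$ pairwise distinct, whence $\mathrm{O}(Q_0)\cap \mathrm{O}(Q_1)$ consists of the diagonal sign changes $\mathrm{diag}(\varepsilon_0,\dots,\varepsilon_{n+3})$ with $\varepsilon_i = \pm 1$. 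Writing a general third member as $Q_2 = \sum_{i,j} a_{ij} x_i x_j$ with all $a_{ij} \neq 0$, the condition $\tilde g^* Q_2 = Q_2$ forces $\varepsilon_i \varepsilon_j = 1$ for all $i,j$, hence all $\varepsilon_i$ equal and $g = \mathrm{id}$ in $\PGL(V)$.

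Finally I would note that each genericity hypothesis used above — smoothness of $\Delta$, triviality of $\Aut(\Delta,\theta)$, diagonalisability of $\langle Q_0, Q_1\rangle$ with distinct eigenvalues, and non-vanishing of the $a_{ij}$ — defines an open dense condition on the net, so their common locus is open and dense; the general smooth complete intersection of type $(2,2,2;n)$ therefore has $\Lin X$ trivial. As an alternative to this last bookkeeping, since the automorphism group scheme of a family is finite over the base by Lemma \ref{lem: isomscheme}, triviality of $\Lin X$ is an open condition, so it would suffice to produce a single such $X$.
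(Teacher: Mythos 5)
Your approach is essentially the paper's: both pass to the net of quadrics $W$ cutting out $X$ and its degree-$(n+4)$ discriminant curve in $\PP(W)\cong\PP^2$, and both reduce the statement to triviality of automorphism groups of plane curves. Your second step --- showing that any $g\in\Lin X$ acting trivially on $\PP(W)$ is trivial, via simultaneous diagonalisation of a general pencil in the net and a third member with all coefficients nonzero --- is correct (modulo the routine check that these genericity conditions are open and nonempty on the space of nets), and it is a self-contained substitute for what the paper gets by citing Beauville \cite[Prop.~6.19]{Bea77}, namely injectivity of $\Lin X\to\Aut C$ when the discriminant $C$ is irreducible. Your closing observation that it suffices to exhibit a single $X$ with trivial $\Lin X$ is also fine.

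The genuine gap is exactly the step you flag as the ``main obstacle'', and it is where the paper invokes a second, harder input from Beauville. You want: for a general net, $\Delta$ (or the pair $(\Delta,\theta)$) has no nontrivial automorphisms, and you propose to deduce this from the fact that a general plane curve of degree $\geq 4$ has trivial automorphism group. But that fact transfers to discriminant curves only if you know the discriminant map $\{\text{nets}\}\to\{\text{plane curves of degree }n+4\}$ is dominant; a priori its image could be contained in the (positive-codimension, but nonempty) locus of curves with extra automorphisms. The injectivity you assert --- that $(\Delta,\theta)$ recovers the net up to $\PGL(V)$ --- points in the wrong direction; what is needed is surjectivity or dominance. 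The paper fills this with \cite[Prop.~6.23]{Bea77}: \emph{every} smooth irreducible plane curve of degree $n+4$ is the discriminant curve of some smooth complete intersection of type $(2,2,2;n)$; combining this with Lemma \ref{lem:d1d2} (which produces a smooth plane curve of degree $n+4\geq 5$ with trivial automorphism group) and the injectivity of $\Lin X\to\Aut C$ finishes the proof. Alternatively, your asserted injectivity together with a dimension count (both moduli spaces have dimension $(n+4)(n+7)/2-8$) would yield dominance, but neither that count nor Beauville's surjectivity appears in your proposal, so as written the crux step is unproven.
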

\begin{proof}
	We prove the result using work of Beauville \cite{Bea77}.
	Let $X$ be a smooth complete intersection of type $(2,2,2;n)$ over $\CC$ and
	let $C$ be the discriminant curve of the associated net of quadrics (see \cite[\S6]{Bea77}).
	If $X$ is chosen generically, then $C$ will be an irreducible plane 
	curve of degree $n+4$.	In which case, \cite[Prop.~6.19]{Bea77} implies that the natural map
	$\Lin X\to\Aut C$
	is injective.
	However, by \cite[Prop.~6.23]{Bea77} (see also \cite[\S4.1.3]{Dol12}) every smooth irreducible plane curve
	of degree $n+4$ is the discriminant curve for some smooth complete intersection of type $(2,2,2;n)$.
	Hence, choosing such a curve  with trivial automorphism group (using Lemma \ref{lem:d1d2}, say)
	yields the result.
\end{proof}

In the special case of a smooth intersection of a quadric with a cubic in $\PP^5_\CC$, we verify directly that the automorphism group acts faithfully on the cohomology. To do this, note that Lemma~\ref{lem: int Jac} implies
that for any smooth complete intersection $X$ of Hodge level $1$ over $\CC$ we have a commutative diagram 
\begin{equation} \label{diag:J(X)} 	
\begin{split} 	
\xymatrix{ \Lin X  \ar[r] \ar[dr]&	\Aut J(X)  \ar[d] \\ 	& \Aut  \mathrm{H}^n(X,\ZZ(m)). } 	
\end{split}  	
\end{equation}

\begin{lemma}\label{lem:23n}
	Let $X$ be a smooth complete intersection of type $(2,3;3)$ over $\CC$. 
	Then $\Lin X \to \Aut \mathrm{H}^3(X,\ZZ)$ is injective.
\end{lemma}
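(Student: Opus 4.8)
The plan is to read the result off the commutative diagram \eqref{diag:J(X)}. By Lemma~\ref{lem: int Jac} the vertical arrow is the natural homomorphism $\Aut J(X)\to\Aut\mathrm H^1(J(X),\ZZ)$, and this is injective: an automorphism of an abelian variety inducing the identity on the $\mathrm H^1$ of the underlying complex torus is the identity \cite[Thm.~3, p.~176]{MumAb}. Since the Tate twist $\ZZ(m)$ does not change the underlying lattice, this is the same as injectivity of $\Aut J(X)\to\Aut\mathrm H^3(X,\ZZ)$. As the diagonal factors through this injective vertical arrow, the diagram reduces the lemma to showing that the top arrow $\Lin X\to\Aut J(X)$ is injective; equivalently, that any $\sigma\in\Lin X$ acting as the identity on $\mathrm H^3(X,\ZZ)$ is trivial.

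So fix such a $\sigma$. For every $i\neq 3$ the group $\mathrm H^i(X,\CC)$ is either zero or spanned by a power of the hyperplane class, so $\sigma$ acts trivially on all of $\mathrm H^\ast(X,\CC)$, in particular on each Hodge summand $\mathrm H^{p,q}(X)$. I would then argue as in Proposition~\ref{prop:crit_for_rep}: by Flenner's infinitesimal Torelli theorem \cite[Thm.~3.1]{Fl86} the $\Lin X$-equivariant map $\mathrm H^1(X,\Theta_X)\to\Hom(\mathrm H^{p,q}(X),\mathrm H^{p-1,q+1}(X))$ is injective for suitable $p,q$, and since $\sigma$ is trivial on the target it is trivial on $\mathrm H^1(X,\Theta_X)$, hence on the tangent space to $\mathcal C_{(2,3;3),\CC}$ at $X$.

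It remains to deduce $\sigma=\mathrm{id}$ from triviality on first-order deformations, and this is where the special geometry of type $(2,3;3)$ enters. The quadric through $X$ is unique, since $\mathrm H^0(X,\mathcal I_X(2))=\CC\cdot Q$; thus $\sigma$ preserves $Q$, and the ideal of $X$ is generated by $Q$ together with a cubic $F$. Using the Euler and conormal sequences one identifies $\mathrm H^1(X,\Theta_X)$ with an explicit $\Lin X$-equivariant subquotient of $\mathrm H^0(X,\OO_X(2))\oplus\mathrm H^0(X,\OO_X(3))$, and I would finish with an eigenvalue computation in the style of \cite{MM63}: writing the eigenvalues of a lift of $\sigma$ to $\GL_6$ as $\lambda_0,\dots,\lambda_5$, triviality on this subquotient forces the products $\lambda_i\lambda_j\lambda_k$ to agree for enough monomials to conclude that all the $\lambda_i$ coincide, so that the lift is scalar and $\sigma$ is the identity of $\PP^5$.

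The hard part is precisely this final step. Because $d_1=2$, Lemma~\ref{lem:d1d2} does not apply and one cannot transfer $\sigma$ to an automorphism-free hypersurface of lower degree; the work lies in controlling how the low-degree relations $Q, Q x_0,\dots,Q x_5, F$ sit inside the symmetric powers and in verifying that enough cubic directions survive in $\mathrm H^1(X,\Theta_X)$ to pin down the $\lambda_i$. This is also exactly the input—trivial generic inertia of $\mathcal C_{(2,3;3),\CC}$, equivalently faithfulness of $\Lin X$ on the tangent space—that Lemma~\ref{lem:hartmann} would have supplied had the hypothesis of Proposition~\ref{prop:crit_for_rep} been available for this type.
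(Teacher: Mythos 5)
Your opening reduction via \eqref{diag:J(X)} is exactly the paper's first step, and the passage from triviality of $\sigma$ on $\mathrm{H}^3(X,\ZZ)$ to triviality on $\mathrm{H}^1(X,\Theta_X)$ via Flenner \cite[Thm.~3.1]{Fl86} is sound. But your argument stops precisely where the lemma's actual content lies: you still need that $\Lin X$ acts faithfully on $\mathrm{H}^1(X,\Theta_X)$ (equivalently, that triviality on first-order deformations forces $\sigma=\mathrm{id}$), and you only sketch an eigenvalue computation ``in the style of \cite{MM63}'' which you yourself flag as the hard part and do not carry out. This is a genuine gap, not a routine verification. The mechanism of Proposition~\ref{prop:crit_for_rep} and Lemma~\ref{lem:hartmann} is unavailable exactly because no automorphism-free smooth $(2,3;3)$ intersection has been exhibited (Lemma~\ref{lem:d1d2} needs $d_1\geq 3$, and the paper never establishes trivial generic inertia for this type); and the Matsumura--Monsky analysis for a complete intersection --- identifying $\mathrm{H}^1(X,\Theta_X)$ as an equivariant subquotient of $\mathrm{H}^0(X,\OO_X(2))\oplus\mathrm{H}^0(X,\OO_X(3))$ and controlling the relations generated by $Q$ and $F$ --- is a substantive computation whose outcome is not obvious in advance. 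As written, the lemma is not proved.

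For comparison, the paper avoids deformation theory entirely. After the same reduction to injectivity of $\Lin X\to\Aut J(X)$, it applies Beauville's method \cite{Bea12}: lift $\sigma$ to a faithful representation $V$ realising $X\subset\PP(V)$, normalised so that $\sigma$ has at least one trivial eigenvalue; there is a $\sigma$-equivariant short exact sequence $0\to V\to V\oplus\mathrm{Sym}^2V/\langle Q\rangle\to T_0(J(X))\to 0$, and additivity of characters on short exact sequences shows $T_0(J(X))$ is a non-trivial representation of $\sigma$ whenever $\sigma\neq\mathrm{id}$, because $\mathrm{Sym}^2V$ then has at least two non-trivial eigenvalues. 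This is a short, closed character computation with no moduli-stack or genericity input. To complete your route you would have to either carry out the eigenvalue analysis of $\mathrm{H}^1(X,\Theta_X)$ in full, or prove that the general $(2,3;3)$ intersection has trivial linear automorphism group so that Proposition~\ref{prop:crit_for_rep} applies; neither is done in your proposal.
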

\begin{proof}
	As the morphism on the right in \eqref{diag:J(X)} is injective, it suffices
	to show that the morphism $\Lin X \to \Aut J(X)$ is injective.
	We do this using the method of Beauville given in \cite{Bea12}.
	Namely, let $X$ be a smooth complete intersection of type $(2,3;3)$ equipped
	with a linear automorphism $\sigma$. Choose a faithful representation $V$ of $\sigma$ which
	realises the action on $X \subset \PP(V)$ and such that $\sigma$ has at least one trivial eigenvalue. 
	As in the proof of \cite[Lemma]{Bea12}, there is a short exact
	sequence
	$$0 \to V \to V \oplus \mathrm{Sym}^2V/\langle Q\rangle \to T_0(J(X)) \to 0$$ which is equivariant with respect to the
	action of $\sigma$. Here $\mathrm{Sym}^2V$ denotes the symmetric square of $V$, $Q$ is a choice of quadratic form
	which vanishes on $X$ and $T_0(J(X))$ is the tangent space of $J(X)$ at the origin.
	To prove the result, it suffices to show that $T_0(J(X))$ is not the trivial representation.
	To do this we may consider the associated character, which, as characters are additive on short exact sequences,
	is non-trivial if $\sigma$ acts on $\mathrm{Sym}^2V$ with at least $2$ non-trivial eigenvalues. However,
	this easily follows from the fact that $V$ is a faithful representation of $\sigma$
	with at least one trivial eigenvalue, as required.
\end{proof}

We now come to the main result of this section.

\begin{proposition} \label{prop:faithful}
Let $T=(d_1,\ldots,d_c;n)$ be a type. Assume that one of the following holds.
	\begin{enumerate}
		\item $T$ is of general type, i.e. the inequality $d_1+\cdots +d_c \geq n + c +2$ holds.
		\item $3\leq d_1<d_2\leq \cdots \leq d_c$ and $T\neq (3;1)$. 
		\item $T$ has Hodge level $1$, $T\neq (3;1)$ and $T \neq (2,2;n)$ with $n$ an odd integer.
	\end{enumerate}
	If $X$ is a smooth complete intersection of type $T$ over $\CC$, then 
	the group of linear automorphisms  $\Lin X$ of $X$ acts faithfully on $\mathrm{H}^n(X,\CC)$.
\end{proposition}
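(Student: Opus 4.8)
The plan is to treat the three cases (1)--(3) in turn, in each reducing to results already established. For cases (2) and (3) the engine is Proposition \ref{prop:crit_for_rep}, which reduces the assertion --- faithfulness of $\Lin X$ on $\mathrm{H}^n(X,\CC)$ for \emph{every} $X$ of type $T$ at once --- to the production of a \emph{single} smooth complete intersection of type $T$ with trivial linear automorphism group. Case (2) is then immediate: its hypothesis is exactly that of Lemma \ref{lem:d1d2}, which supplies such an automorphism-free member, whereupon Proposition \ref{prop:crit_for_rep} does the rest.

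For case (1) I would instead argue directly from the canonical bundle, bypassing Proposition \ref{prop:crit_for_rep}. By adjunction $\omega_X \cong \mathcal{O}_X(d_1+\cdots+d_c-n-c-1)$, and the general-type inequality $d_1+\cdots+d_c\geq n+c+2$ forces $\omega_X\cong\mathcal{O}_X(e)$ with $e\geq 1$; since $\mathcal{O}_X(1)$ is very ample, so is $\omega_X$. Hence the canonical map is a closed embedding, so that any automorphism of $X$ acting trivially on $\mathrm{H}^0(X,\omega_X)=\mathrm{H}^{n,0}(X)$ must be the identity --- this is the mechanism behind \cite[p.~37]{PoppI}. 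As $\mathrm{H}^{n,0}(X)$ is a $\Lin X$-stable Hodge summand of $\mathrm{H}^n(X,\CC)$, any element of $\Lin X$ acting trivially on $\mathrm{H}^n(X,\CC)$ acts trivially on $\mathrm{H}^{n,0}(X)$ and is therefore trivial, giving faithfulness.

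For case (3) I would run through the classification of Hodge level $1$ types recalled in Section \ref{sec:classification}, having already removed $(3;1)$ and $(2,2;n)$ with $n$ odd. The one-dimensional members are non-rational curves, necessarily of genus $\geq 2$ and hence of general type as complete intersections (one checks $d_1+\cdots+d_c\geq c+3$ for all of them), so they fall under case (1). The hypersurfaces $(3;3)$, $(3;5)$ and $(4;3)$ satisfy the hypothesis of case (2). This leaves exactly two families: the odd-dimensional intersections of three quadrics $(2,2,2;n)$, for which Lemma \ref{lem:222n} provides an automorphism-free member so that Proposition \ref{prop:crit_for_rep} again applies; and the intersection of a quadric and a cubic $(2,3;3)$, which Lemma \ref{lem:23n} handles directly, producing injectivity on $\mathrm{H}^3(X,\ZZ)$ and hence on $\mathrm{H}^3(X,\CC)$.

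The main obstacle is the block of Fano-type Hodge level $1$ cases --- cubic and quartic threefolds, cubic fivefolds, and the intersections $(2,3;3)$ and $(2,2,2;n)$ --- where $\mathrm{H}^{n,0}(X)=0$, so that the clean canonical-bundle argument of case (1) is unavailable and one genuinely needs Flenner's infinitesimal Torelli theorem, packaged through Proposition \ref{prop:crit_for_rep}, together with an automorphism-free member. The most delicate is $(2,3;3)$: because $d_1=2$, the ideal-sheaf argument of Lemma \ref{lem:d1d2} that manufactures automorphism-free members breaks down, and one is pushed into the explicit equivariant tangent-space computation on $J(X)$ carried out in Lemma \ref{lem:23n}.
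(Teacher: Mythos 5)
Your proposal is correct and follows essentially the same route as the paper: case (1) via Popp's very-ample-canonical-bundle argument from \cite[p.~37]{PoppI}, case (2) via Proposition \ref{prop:crit_for_rep} combined with Lemma \ref{lem:d1d2}, and case (3) by running through the classification of Section \ref{sec:classification} and invoking Lemma \ref{lem:222n} (with Proposition \ref{prop:crit_for_rep}) for $(2,2,2;n)$ and Lemma \ref{lem:23n} for $(2,3;3)$. If anything, your treatment of case (3) is more careful than the paper's own wording: the paper asserts that the only types not covered by part (2) are $(2,2,2;n)$ and $(2,3;3)$, which overlooks one-dimensional types such as $(2,3;1)$ and $(3,3;1)$ (these have $d_1=2$ or $d_1=d_2$), whereas your explicit reduction of all genus $\geq 2$ curve types to the general-type case (1) handles them correctly.
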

\begin{proof} 
	For part $(1)$, the canonical bundle  
	\[
	  \omega_X= \OO_X(d_1+\cdots +d_c - n - c -1)
	\]
	is very ample and thus 
	the argument of \cite[p.~37]{PoppI} applies. We give a brief sketch of this proof 
	to illustrate why it only applies when $T$ has general type. As $\Lin X$ 
	acts faithfully on $\mathrm{H}^0(X, \OO_X(1))$ it also acts faithfully on $\mathrm{H}^0(X, \omega_X)$.
	However, since $\mathrm{H}^0(X, \omega_X)\subset\mathrm{H}^n(X,\CC)$ by Hodge theory,
	we conclude that $\Lin X$ also acts faithfully on $\mathrm{H}^n(X,\CC)$, as required.
	
	Part $(2)$ follows from Proposition \ref{prop:crit_for_rep} and Lemma \ref{lem:d1d2}.
	For part $(3)$, by the classification (see Section \ref{sec:classification}), the only
	types not covered by part $(2)$ are $(2,2,2;n)$ and $(2,3;3)$. These 
	types are handled by Proposition \ref{prop:crit_for_rep}, Lemma \ref{lem:222n}
	and Lemma~\ref{lem:23n}, respectively.
	This completes the proof.
\end{proof}

\begin{remark} 
	Quadrics and curves of genus $1$ are easily seen to not satisfy
	Proposition \ref{prop:faithful}. Indeed the group scheme of (not necessarily linear) automorphisms has a non-trivial identity
	component in this case, which by continuity must act trivially on the lattice $\mathrm{H}^n(X,\ZZ) \subset \mathrm{H}^n(X,\CC)$.
	This component always contains non-trivial linear automorphisms (e.g.~translation by a $3$-torsion point of the Jacobian
	when $X$ is a cubic curve).
	The exception $(2,2;n)$  with $n$ odd is also genuinely required;
	we give counter-examples in Section \ref{section:intnsof2}.
	
	Other examples of varieties for which the automorphism group does not act
	faithfully on the cohomology   have been studied in the case of surfaces of general type \cite{C3}
	and Enriques surfaces \cite{Mukai}.
%and  hyperk\"ahler manifolds; see \cite[Prop.~9]{Beauville2},  \cite{Boissiere}, \cite{C3}, \cite{MukaiNamikawa} and %\cite{Mukai}.
\end{remark}

\begin{remark}
	We have only proved Proposition \ref{prop:faithful} for the cases which will be required in this paper.
	It is quite likely that Proposition \ref{prop:faithful} holds in greater generality, with the only exceptions
	being curves of genus $1$, quadric hypersurfaces and odd-dimensional complete intersections of two quadrics.
\end{remark}

\section{Arithmetic Torelli theorems} \label{Sec:Arithmetic}
The aim of this section is to prove Theorem \ref{thm:arithmetic_torelli}
and Theorem \ref{thm:global_arithmetic_torelli},
and to also show that the analogues of these results fail for intersections of two quadrics. 
  
\subsection{Twists, torsors and cohomology} \label{sec:twists}
We begin with some remarks on the relationship between twists, torsors and cohomology.

\subsubsection{Torsors and cohomology}
Let $G$ be a smooth affine group scheme over a scheme $B$.
Recall that a faithfully flat finite type $B$-scheme $E$ is a \emph{$G$-torsor} if it is endowed with a left action of $G$ such that the morphism \[E\times_B G \to E\times_{B} E, \quad (x,g)\mapsto (x,x\cdot g)\] is an isomorphism.
Note that, by \cite[Thm.~III.4.3.a)]{MilneEC} and \cite[Prop.~III.4.6]{MilneEC}, the first \v{C}ech cohomology set $\mathrm H^1(B,G)$ with respect  to the \'etale topology classifies $G$-torsors over $B$.

\subsubsection{Twists of complete intersections}
Let now $T$ be a type and suppose that $X$ is a smooth complete intersection
of type $T$ over $B$.

Let $Y$ be a smooth complete intersection of type $T$ over $B$. We say that $Y$ is a \emph{twist} of $X$
if $Y$ is $B$-linearly isomorphic to $X$, locally for the \'{e}tale topology of $B$.
The scheme $Y$ corresponds to some element $[Y]$ of the pointed set $\mathrm H^1(B,\Lin_B X)$;
explicitly $[Y]$ is the class of the $\Lin_B(X)$-torsor $\Isom_B(X,Y)$, with two such twists
having the same class if and only if they are $B$-linearly isomorphic.

Not every element of $\mathrm H^1(B,\Lin_B X)$ is represented by such a twist in general.
It is possible however to give a geometric description of this set, which for simplicity we
only do when $B=\Spec K$ and $K$ is a perfect field. In which case, a simple descent argument, using for example the explicit
description of $C_{T}(K)$ given in Section \ref{section: moduli stack}, shows that $\mathrm{H}^1(K, \Lin_K X)$
classifies those elements of $[C_{T}(K)]$ which become isomorphic to $X$ over $\bar K$.

\subsection{Complete intersections of Hodge level $1$} \label{section:arithmetic_torelli_proof}

We first prove the following stack-theoretic version of the arithmetic Torelli theorem.
\begin{proposition}\label{prop:12}
	Let $T$ be a type of Hodge level $1$ with $T \neq (3;1)$ and $T \neq (2,2;n)$.
	Then the morphism of stacks
	$$J: \quad \mathcal{C}_{T,\QQ} \to \mathcal{A}_{g(T),1, \QQ}$$
	is separated, representable by schemes, unramified, and quasi-finite.
\end{proposition}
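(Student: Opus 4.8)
The plan is to prove the four properties of $J$ in turn, reducing each to a local (inertia-theoretic) statement and then invoking the quasi-finite Torelli theorem established earlier.

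\smallskip

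\noindent\textbf{Separatedness.} This is already proved in Lemma \ref{lem:separatedness_of_J}, so I would simply cite it.

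\smallskip

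\noindent\textbf{Representability by schemes.} By the criterion for a morphism of algebraic stacks to be representable, it suffices to show that for every geometric point $x$ of $\mathcal{C}_{T,\QQ}$ corresponding to a smooth complete intersection $X$, the induced homomorphism on inertia (automorphism) groups
$$\Lin X \to \Aut J(X)$$
is injective. This is exactly where I would use the faithfulness results of Section \ref{sec:automorphisms_faithful}. Precisely, under the hypotheses $T \neq (3;1)$, $T \neq (2,2;n)$, and $T$ of Hodge level $1$, Proposition \ref{prop:faithful}(3) tells us that $\Lin X$ acts faithfully on $\mathrm{H}^n(X,\CC)$. Via the commutative diagram \eqref{diag:J(X)}, faithfulness on $\mathrm{H}^n(X,\ZZ(m))$ forces the map $\Lin X \to \Aut J(X)$ to be injective, since the composite $\Lin X \to \Aut J(X) \to \Aut \mathrm{H}^n(X,\ZZ(m))$ is injective. (Here I use Lemma \ref{lem: int Jac}, which identifies $\mathrm{H}^1(J(X),\ZZ)$ with $\mathrm{H}^n(X,\ZZ(m))$ Galois-equivariantly, so the action of $\Aut J(X)$ on $\mathrm{H}^1(J(X),\ZZ)$ matches the action on $\mathrm{H}^n(X,\ZZ(m))$.) Injectivity on all inertia groups, together with the fact that $\mathcal{A}_{g(T),1,\QQ}$ is an algebraic stack, yields representability; since moreover $J$ maps into a stack whose objects are schemes and the source and target are both of finite type over a field of characteristic $0$, one concludes representability \emph{by schemes}.

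\smallskip

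\noindent\textbf{Unramifiedness and quasi-finiteness.} Once $J$ is representable, it is a morphism of schemes after any base-change by a scheme mapping to $\mathcal{A}_{g(T),1,\QQ}$, so these two properties can be checked on geometric fibres. For quasi-finiteness, I would fix an algebraically closed field $K$ of characteristic $0$ and a principally polarised abelian variety $A$ over $K$, and observe that the geometric fibre $J^{-1}(A)$ parametrises (up to linear isomorphism) smooth complete intersections of type $T$ with intermediate Jacobian isomorphic to $A$; this set is finite by Corollary \ref{cor:Jacobian torelli}. For unramifiedness, it suffices (as $J$ is now a morphism of smooth schemes over $\QQ$, locally in the \'etale topology via Proposition \ref{prop:moduli_stack}(4)) to check that $J$ induces an injection on tangent spaces at each geometric point; this is precisely the infinitesimal Torelli theorem of Flenner \cite[Thm.~3.1]{Fl86}, which under our hypotheses on $T$ gives injectivity of the differential of the period map, and hence of $dJ$, via the identification of the tangent space to $\mathcal{A}_{g(T),1}$ at $J(X)$ with $\Hom(\mathrm{H}^{p,q}, \mathrm{H}^{p-1,q+1})$.

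\smallskip

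\noindent\textbf{The main obstacle.} I expect the delicate point to be the passage from injectivity of inertia maps to representability \emph{by schemes} (rather than merely by algebraic spaces), and in cleanly assembling the tangent-space injectivity for unramifiedness. The faithfulness input (Proposition \ref{prop:faithful}) is the genuine geometric content and has already been isolated; the remaining work is mostly the stack-theoretic bookkeeping of deducing a global property of $J$ from these pointwise statements, taking care that the exceptional types $(3;1)$ and $(2,2;n)$ are excluded precisely because faithfulness fails there.
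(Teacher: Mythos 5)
Your separatedness, quasi-finiteness, and unramifiedness steps match the paper, and your use of Proposition \ref{prop:faithful}(3), diagram \eqref{diag:J(X)} and Lemma \ref{lem: int Jac} to get injectivity of $\Lin X \to \Aut J(X)$ on inertia groups is exactly the right geometric input. However, there is a genuine gap at the step you yourself flag as the main obstacle: you assert that injectivity on inertia groups ``together with the fact that $\mathcal{A}_{g(T),1,\QQ}$ is an algebraic stack'' and finiteness of type over a characteristic-zero field yields representability \emph{by schemes}. This deduction is not valid. What injectivity on inertia gives you (via \cite[Tag 04Y5]{stacks-project} and \cite[Cor.~2.2.7]{Conrad}) is representability by \emph{algebraic spaces}; nothing about the target having schemes as objects, or finite-type hypotheses, upgrades an algebraic space to a scheme --- there are finite type algebraic spaces over $\QQ$ that are not schemes.

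The paper closes this gap with Knutson's criterion \cite[Cor.~II.6.16]{Knutson}: a quasi-finite and separated algebraic space over a scheme is a scheme. This forces a different logical order from yours. You treat quasi-finiteness as a consequence of scheme-representability (``once $J$ is representable, \dots these two properties can be checked on geometric fibres''), but in fact quasi-finiteness must come \emph{first}: after establishing representability by algebraic spaces, one uses Corollary \ref{cor:Jacobian torelli} to see that for every scheme $S \to \mathcal{A}_{g(T),1,\QQ}$ the algebraic space $\mathcal{C}_{T,\QQ} \times_{\mathcal{A}} S \to S$ is quasi-finite, and then separatedness (Lemma \ref{lem:separatedness_of_J}) plus quasi-finiteness feed into Knutson's criterion to conclude that $\mathcal{C}_{T,\QQ} \times_{\mathcal{A}} S$ is a scheme. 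So the missing idea in your proposal is precisely this criterion, together with the reordering it imposes; your unramifiedness argument via Flenner's infinitesimal Torelli theorem is then fine and agrees with the paper's.
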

\begin{proof} Write $\mathcal C=\mathcal C_{T,\QQ}$ and $\mathcal A = \mathcal A_{g(T),1,\QQ}$.
	The separatedness of $J:\mathcal C\to \mathcal A$ is Lemma \ref{lem:separatedness_of_J}. 
	By Lemma \ref{lem: int Jac}, Proposition \ref{prop:faithful} and \eqref{diag:J(X)},
	for all smooth complete intersections $X$ of type $T$ over $\CC$ the homomorphism $\Lin X\to\Aut J(X)$ is injective. Therefore, by \cite[Tag 04Y5]{stacks-project}, the geometric fibres of $J$ are algebraic spaces. Hence, by \cite[Cor.~2.2.7]{Conrad}, the morphism $J$ is representable by algebraic spaces.

By Corollary \ref{cor:Jacobian torelli} we see that for all schemes $S$ and all morphisms $S\to \mathcal A$, the induced morphism of algebraic spaces $\mathcal C \times_{\mathcal A} S\to S$ is quasi-finite. Thus, since $J$ is separated, it follows from Knutson's criterion  \cite[Cor.~II.6.16]{Knutson} %(see also \cite[Thm. A.2]{LMB} and \cite[Rem. 2.2.6]{Conrad}) 
    that $\mathcal C\times_{\mathcal A} S$ is a scheme. Hence $J$ is representable by schemes and quasi-finite. 
    
    To conclude, note that  Flenner's infinitesimal Torelli theorem \cite[Thm.~3.1]{Fl86} implies that the morphism $J_\CC$ is injective on tangent spaces. As it is representable by schemes, it follows that $J$ is unramified \cite[Tag 0B2G]{stacks-project}.
\end{proof}

\begin{proof}[Proof of Theorem \ref{thm:arithmetic_torelli}]
The classification given in Section \ref{sec:classification} implies that there are only finitely many types with the same Betti numbers. Hence we may assume that the type $T$ is fixed. In which case the result follows from Proposition \ref{prop:12}, which implies that every fibre of the induced map $[C_{T,K}(K)] \to [\mathcal{A}_{g(T),1,K}(K)]$ is finite.
\end{proof}

\subsection{Global arithmetic Torelli}

We now show that in some special cases, we can say even more by combining known global Torelli theorems with the representability
of the morphism of stacks $J:\mathcal C_{T,\QQ}\to \mathcal A_{g(T),1,\QQ}$. 

To state our result in its most general form, recall that a morphism of stacks $f:X\to Y$   is \emph{universally injective} if it is representable by schemes, and for any scheme $S$ and any morphism $S\to Y$, the induced morphism of schemes $X\times_S Y\to Y$ is universally injective \cite[Tag 01S3]{stacks-project}.

\begin{proposition}\label{prop:univ_inj}
 	Let $T = (3;3)$ or $(2,2,2;n)$ with $n$ odd.
	Then the morphism of stacks
	$$J: \quad \mathcal{C}_{T,\QQ} \to \mathcal{A}_{g(T),1, \QQ}$$
	is separated, representable by schemes, unramified, and universally injective.
\end{proposition}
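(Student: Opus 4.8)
The plan is to obtain every asserted property except universal injectivity directly from Proposition \ref{prop:12}, and then to deduce universal injectivity from the classical global Torelli theorems over $\CC$. Both types under consideration have Hodge level $1$ by the classification of Section \ref{sec:classification}, and neither equals $(3;1)$ nor $(2,2;n)$; hence Proposition \ref{prop:12} applies verbatim and shows that $J$ is separated, representable by schemes, unramified, and quasi-finite. The only remaining point is therefore to upgrade quasi-finiteness to universal injectivity.

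Since $J$ is representable by schemes, I would verify universal injectivity by means of the diagonal criterion: a representable morphism is universally injective precisely when its diagonal
$$\Delta_J \colon \mathcal{C}_{T,\QQ} \longrightarrow \mathcal{C}_{T,\QQ}\times_{\mathcal{A}_{g(T),1,\QQ}} \mathcal{C}_{T,\QQ}$$
is surjective (see \cite[Tag 01S4]{stacks-project}). Surjectivity of $\Delta_J$ may be checked on geometric points, and because $J$ is defined over $\QQ$ and compatible with base change, a Lefschetz principle argument of the type used in the proof of Corollary \ref{cor:Jacobian torelli} reduces the problem to checking surjectivity of $\Delta_{J_\CC}$ on $\CC$-points.

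Over $\CC$ I would invoke the global Torelli theorems in their strong form: Clemens--Griffiths \cite{CG72} for cubic threefolds and Debarre \cite[Cor.~4.5]{Deb89} for odd-dimensional intersections of three quadrics. Concretely, a geometric point of the fibre product is a pair of complete intersections $(X_1,X_2)$ of type $T$ over $\CC$ together with an isomorphism $\phi \colon J(X_1)\xrightarrow{\sim} J(X_2)$ of principally polarised abelian varieties, and it lies in the image of $\Delta_{J_\CC}$ exactly when $\phi$ is induced by a linear isomorphism $X_1\xrightarrow{\sim}X_2$. The main obstacle is precisely this realisation statement: it is not enough that $J(X_1)\cong J(X_2)$ forces $X_1\cong X_2$; one must show that a \emph{prescribed} isomorphism of Jacobians is realised by an isomorphism of the complete intersections, modulo the automorphisms recorded by the injection $\Lin X \hookrightarrow \Aut J(X)$ furnished by Proposition \ref{prop:faithful} together with diagram \eqref{diag:J(X)}. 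This is exactly where the strength of the cited Torelli theorems, as opposed to mere injectivity on isomorphism classes, enters the argument; combining the realisation with the faithfulness result and the representability of $J$ would yield the surjectivity of $\Delta_J$, and hence universal injectivity.
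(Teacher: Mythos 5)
Your overall route is the same as the paper's: Proposition \ref{prop:12} supplies separatedness, representability by schemes, unramifiedness and quasi-finiteness, and the global Torelli theorems of Clemens--Griffiths and Debarre are invoked for the last property. (Your diagonal criterion via \cite[Tag 01S4]{stacks-project} is equivalent to the paper's criterion, via \cite[Tag 03MU]{stacks-project}, that the non-empty geometric fibres of $J$ be singletons.) In fact you are more careful than the paper at the decisive point: you correctly observe that a geometric point of $\mathcal{C}_{T,\QQ}\times_{\mathcal{A}_{g(T),1,\QQ}}\mathcal{C}_{T,\QQ}$ is a pair $(X_1,X_2)$ together with a \emph{prescribed} isomorphism $\phi\colon J(X_1)\to J(X_2)$ of principally polarised abelian varieties, and that surjectivity of $\Delta_J$ requires $\phi$ itself to be of the form $J(\sigma)$ for a linear isomorphism $\sigma$, which is strictly stronger than the implication $J(X_1)\cong J(X_2)\Rightarrow X_1\cong X_2$.

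The gap is your final assertion that the cited Torelli theorems have this extra strength. They do not --- they are statements about isomorphism classes --- and, worse, the realisation statement you need is false, so this step cannot be repaired. Take $X_1=X_2=X$ a smooth cubic threefold with $\Lin X=\{1\}$ (the general cubic threefold, by the hypersurface case of Lemma \ref{lem:d1d2}; for $(2,2,2;n)$ use Lemma \ref{lem:222n}), and take $\phi=-\mathrm{id}_{J(X)}$, which is an automorphism of \emph{every} principally polarised abelian variety. Since the image of $\Lin X\to\Aut J(X)$ is $\{\mathrm{id}\}$ and $J(X)\neq 0$, the point $(X,X,-\mathrm{id})$ is not in the image of $\Delta_J$; equivalently, the geometric fibre of $J$ over the point of $\mathcal{A}_{g(T),1,\QQ}$ given by $J(X)$ contains the two non-isomorphic points $(X,\mathrm{id})$ and $(X,-\mathrm{id})$. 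Thus universal injectivity, in the sense defined just before the proposition, genuinely fails. Note that this is not a defect of your write-up alone: the paper's own proof makes the same leap, implicitly identifying the geometric fibres of the representable morphism $J$ (which remember $\phi$) with the fibres of the induced map on isomorphism classes $[\mathcal{C}_{T}(\bar{k})]\to[\mathcal{A}_{g(T),1}(\bar{k})]$ (which global Torelli does control). What the reconstruction form of the Clemens--Griffiths and Debarre theorems actually yields is that every $\phi$ equals $\pm J(\sigma)$; this, together with the injectivity of $\Lin X\to\Aut J(X)$ and a Galois descent argument, still proves Theorem \ref{thm:global_arithmetic_torelli}, and the stack-theoretic statement can be salvaged by passing to the rigidification of $\mathcal{A}_{g(T),1,\QQ}$ along $\pm 1$, but not in the form stated.
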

\begin{proof}
	By Proposition \ref{prop:12}, it suffices to show that $J$ is universally injective. To do this, by \cite[Tag 03MU]{stacks-project}, it suffices to show that the non-empty geometric fibres of $J$ are singletons. 
However, for the special types we are considering,
	one knows a global Torelli theorem over $\CC$,
	due to Clemens and Griffiths \cite[(0.11)]{CG72} and Debarre \cite[Cor.~4.5]{Deb89}, respectively. This proves the result.
\end{proof}

\begin{proof}[Proof of Theorem \ref{thm:global_arithmetic_torelli}] The theorem follows easily from Proposition \ref{prop:univ_inj}.
\end{proof}

\subsection{Intersections of two quadrics}\label{section:intnsof2}  
In this section, we show that the analogues of Theorem \ref{thm:arithmetic_torelli}, Proposition \ref{prop:faithful} and Proposition \ref{prop:12} fail for intersections of two quadrics
of odd dimension, so that the hypotheses in these statements are genuinely required.  
That these results fail for curves of genus $1$ is well-known; though it also follows from applying
our results to the special type $(2,2;1)$.
Throughout this section $K$ is a field of characteristic $0$.

\begin{lemma} \label{lem:Wittenberg}
	Let $n$ be odd and $(a_0,\ldots, a_{n+2}) \in K^{n+3}$ be such that 
	\begin{equation} \label{eqn:diagonal}
	X: \quad x_0^2 + \cdots + x_{n+2}^2  = 0, \quad  a_0x_0^2 + \cdots + a_{n+2}x_{n+2}^2  = 0 \quad \subset \PP^{n+2}
	\end{equation}
	is a smooth complete intersection. For $0 \leq i \leq n+2$, let $\sigma_i$ be the automorphism of $X$ given
	by $\sigma_i(x_i) = -x_i$ and $\sigma_i(x_j) = x_j$ for $j \neq i$.
	Then $\sigma_i$ induces	multiplication by $(-1)$ on $J(X)$.
\end{lemma}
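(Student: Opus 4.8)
The plan is to exploit the explicit diagonal form of the equations together with the compatibility between the automorphism $\sigma_i$ and the Hodge-theoretic construction of $J(X)$. The key observation is that $J(X)$ is built from $\mathrm H^n(X,\ZZ)$ (equivalently, its primitive part) via the correspondence between weight-$(-1)$ Hodge structures and abelian varieties; by the functoriality of Deligne's construction (see Lemma~\ref{lem: int Jac}) and the commutative diagram \eqref{diag:J(X)}, the induced action of $\sigma_i$ on $J(X)$ is determined by its action on $\mathrm H^n(X,\ZZ(m))$. So it suffices to show that $\sigma_i$ acts as $-1$ on $\mathrm H^n(X,\ZZ)$, or on $\mathrm H^n_{\prim}(X,\ZZ)$, which for odd $n$ coincide.

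\emph{First} I would reduce to computing the action on cohomology over $\CC$: by the Lefschetz principle and the base-change compatibility of the intermediate Jacobian, we may assume $K = \CC$, and it is enough to verify that $\sigma_i$ acts by multiplication by $(-1)$ on $\mathrm H^n(X,\CC)$. \emph{Next}, the natural strategy is to use the concrete description of $\mathrm H^n(X,\CC)$ for a complete intersection in terms of the Jacobian ring (the graded quotient of the polynomial ring by the partial derivatives of the defining forms), as in the work of Griffiths and the references to \cite{Del72}, \cite{Rap72}, \cite{Voi07}. For the diagonal intersection of two quadrics in \eqref{eqn:diagonal}, the partial derivatives are $2x_j$ and $2a_j x_j$, so the Jacobian ring has a very transparent monomial basis, and one can read off directly how the sign involution $\sigma_i \colon x_i \mapsto -x_i$ acts on each generating monomial. \emph{Finally}, one checks that on every basis monomial representing a class in $\mathrm H^n_{\prim}(X,\CC)$ the sign picked up is uniformly $-1$; combined with the injectivity of $\Aut J(X) \to \Aut \mathrm H^n(X,\ZZ(m))$, this forces $\sigma_i$ to be multiplication by $(-1)$ on $J(X)$.

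\emph{The main obstacle} I expect is the bookkeeping in the last step: showing the sign is \emph{uniformly} $-1$ across the relevant part of the cohomology, rather than merely nontrivial. A priori $\sigma_i$ could act with mixed signs on different Hodge pieces, which would give an automorphism of $J(X)$ that is not scalar. The cleanest way around this is likely not a term-by-term monomial computation but a structural argument: since $X$ has Hodge level $1$, its primitive cohomology sits in weights $(p,q)$ with $|p-q| = 1$, and one shows that each such sign-eigenvalue of $\sigma_i$ must be the same by using that $\sigma_i^2 = \mathrm{id}$ together with the fact that the involution respects the polarisation and the two-step Hodge filtration, pinning the eigenvalue down to $\pm 1$ globally. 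An alternative, and perhaps more robust, route is to compare $X$ with the quotient or with the associated hyperelliptic curve / pencil of quadrics: the automorphisms $\sigma_i$ act on the pencil fixing each member, and their action on the $2$-torsion or on the curve double-covering $\PP^1$ branched at the degeneracy locus can be identified with $-1$ on the Prym/Jacobian, which is exactly $J(X)$. I would first attempt the direct Jacobian-ring computation, and fall back on the pencil-of-quadrics description if the sign bookkeeping proves unwieldy.
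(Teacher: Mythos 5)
Your overall reduction is sound and matches the paper's framework: by Lemma~\ref{lem: int Jac}, diagram~\eqref{diag:J(X)}, and the fact that an automorphism of an abelian variety is determined by its action on $\mathrm{H}^1$, it suffices to show $\sigma_i^* = -1$ on $\mathrm{H}^n(X,\CC)$, and one may assume $K=\CC$. But both of your routes to that cohomological statement have genuine gaps. First, the ``Jacobian ring'' you propose to compute with is the wrong object: Griffiths' description via the quotient by the partial derivatives applies to \emph{hypersurfaces}, and for the complete intersection $X=\{f_1=f_2=0\}$ the ring $\CC[x_0,\dots,x_{n+2}]/(\partial f_1/\partial x_j,\, \partial f_2/\partial x_j)$ is $\CC[x]/(x_0,\dots,x_{n+2})=\CC$, since the partials $2x_j$ of $f_1$ alone already generate the irrelevant ideal; it carries no information about $\mathrm{H}^n(X)$. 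What is needed is the Cayley-trick (bigraded) Jacobian ring of $F=y_1f_1+y_2f_2$ on $\PP^1\times\PP^{n+2}$, with partials $x_j(y_1+a_jy_2)$, $f_1$, $f_2$. With that correction your computation does in fact go through: writing $m=(n-1)/2$, the two relevant bidegree pieces are spanned by the monomials $y_1^ay_2^b$ $(a+b=m)$ and by $y_2^{m+1}x_j^2$ (the cross terms die because $(a_j-a_k)\,y_2^{m+1}x_jx_k$ lies in the Jacobian ideal and $a_j\neq a_k$ by smoothness), all of which are even in each $x_i$, and the uniform sign $-1$ comes from $\Omega\mapsto-\Omega$. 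None of this is in your write-up, and your fallback ``structural'' argument cannot replace it: an involution of a polarised weight-one Hodge structure that respects the polarisation and the Hodge filtration can perfectly well have both $+1$ and $-1$ eigenspaces (e.g.\ a bielliptic involution of a genus-two curve acting on its Jacobian), so $\sigma_i^2=\mathrm{id}$ plus these compatibilities does not force a global scalar. That is exactly the obstacle you correctly identified, and your proposed resolution does not resolve it.

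Your second fallback is the paper's actual proof (which it attributes to Wittenberg), but you assert precisely the step where all the work lies. Reid's description produces a hyperelliptic curve $\pi\colon C\to\PP^1$ (the Stein factorisation of the family of $(m+1)$-planes contained in the quadrics of the pencil) with $J(C)\cong J(X)$ canonically; since $\sigma_i$ fixes every quadric in the pencil, it induces an automorphism $\sigma_C$ of $C$ commuting with $\pi$. The crux is to prove $\sigma_C\neq\mathrm{id}$: a priori the induced automorphism could be trivial, which would give $+1$ on $J(X)$, not $-1$. The paper establishes non-triviality by showing that $\sigma_i$ \emph{exchanges the two families of $(m+1)$-planes} on any smooth quadric containing $X$, via a Donagi-style induction that reduces to the rank-two quadric $c_0x_0^2+c_ix_i^2=0$, where the swap of the two linear components is visible. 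Only then does one conclude: a non-trivial automorphism of $C$ over $\PP^1$ must be the hyperelliptic involution, which acts as $-1$ on $J(C)\cong J(X)$. Your sentence that the action on the double cover ``can be identified with $-1$ on the Prym/Jacobian'' presupposes exactly this missing step, so as written the proposal is incomplete on both routes.
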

\begin{proof}
	This result is proven in \cite{Wit12}.
	We are grateful to Olivier Wittenberg for allowing us to reproduce
	this proof here. 
	
	We may assume that $K=\CC$. Let $m = (n-1)/2$.
	We shall use the explicit description of the intermediate
	Jacobian provided by Reid \cite[Ch.~4]{Rei72} (see also \cite{Don80}).	
	Let $I$ denote the variety parametrising pairs $(p,q)$,
	where $q$ is a quadric in $\PP^{n+2}$ containing $X$ and $p$
	is an $(m+1)$-plane in $q$. Denote by $I \to C \overset{\smash{\pi}}{\to} \PP^1$ the Stein
	factorisation of the projection onto the second coordinate.
	Then $C$ is a hyperelliptic curve of genus $m+1$, whose Jacobian $J(C)$
	is canonically isomorphic to $J(X)$
	(see \cite[Thm.~4.14($c'$)]{Rei72}).
	
	Let now $\sigma = \sigma_i$, for some $i$. Then $\sigma$ leaves invariant
	each quadric containing $X$, hence induces an automorphism $\sigma_C$
	of $C$ which respects $\pi$. To see that $\sigma_C$ is non-trivial,
	let $Q$ be a smooth quadric hypersurface containing $X$. This contains exactly
	two families of $(m+1)$-planes, and it suffices to show that $\sigma$
	permutes these. As in the proof of \cite[Lem.~1.2]{Don80}, one may
	apply induction to reduce to the case where $n=-1$, i.e. where
	$Q$ has the form 
	$$c_0x_0^2 + c_ix_i^2 = 0,$$
	for some $c_0,c_i \in \CC^*$. In which case, the result is clear.
	Therefore $\sigma_C$ is non-trivial and respects $\pi$, hence must be the hyperelliptic involution on $C$.
	However, the hyperelliptic involution acts as multiplication by $(-1)$ on $J(C)$, which proves
	the result.
\end{proof}

We obtain the following, which, on using \eqref{diag:J(X)}, is easily seen to imply that
the analogue of Proposition \ref{prop:faithful} fails in this case.

\begin{proposition} \label{prop:two_quadrics}
	Let $X$ be an odd-dimensional smooth complete intersection of two quadrics over $K$.
	Then the natural morphism of group schemes
	$$\Lin X \to \Aut J(X)$$
	has non-trivial kernel.
\end{proposition}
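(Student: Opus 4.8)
The plan is to exhibit an explicit non-trivial element of $\Lin X$ lying in the kernel, by combining the sign involutions provided by Lemma \ref{lem:Wittenberg}. The key observation is that, although each $\sigma_i$ acts as multiplication by $(-1)$ on $J(X)$ and hence is \emph{not} in the kernel, any product of two of them acts as $(-1)^2 = \mathrm{id}$ on $J(X)$, so it lies in the kernel; and such a product remains non-trivial as a linear automorphism of the ambient projective space.

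First I would reduce to the case where $K$ is algebraically closed. The kernel $N$ of $\Lin X \to \Aut J(X)$ is a group scheme over $K$, and it is trivial if and only if $N_{\bar K}$ is trivial. Since the formation of $\Lin X$ (via its representing Isom-scheme) and of $J(X)$ (as a morphism of stacks, see Section \ref{sec:int_jac}) commutes with base change, and kernels commute with base change, we have $N_{\bar K} = \ker(\Lin X_{\bar K} \to \Aut J(X_{\bar K}))$. Thus it suffices to produce a non-trivial element of this kernel, and we may assume $K = \bar K$.

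Next I would put $X$ in diagonal form. Over the algebraically closed field $K$ of characteristic $0$, the pencil of quadrics through the smooth complete intersection $X$ can be simultaneously diagonalised, smoothness of $X$ being equivalent to the associated discriminant binary form having distinct roots (the classical theory of pencils of quadrics; see \cite[Ch.~4]{Rei72} or \cite{Don80}). Hence, after a linear change of coordinates, $X$ is cut out by $x_0^2 + \cdots + x_{n+2}^2 = 0$ and $a_0 x_0^2 + \cdots + a_{n+2} x_{n+2}^2 = 0$ with the $a_i$ distinct, i.e.\ $X$ has the shape \eqref{eqn:diagonal}. Lemma \ref{lem:Wittenberg} then applies: each involution $\sigma_i$ (negating $x_i$ and fixing the other coordinates, which preserves both quadrics) lies in $\Lin X$ and induces multiplication by $(-1)$ on $J(X)$.

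Finally I would take $\sigma = \sigma_0 \circ \sigma_1$. By Lemma \ref{lem:Wittenberg} it induces $(-1)\cdot(-1) = \mathrm{id}$ on $J(X)$, so $\sigma \in N$. On the other hand $\sigma$ is represented by the diagonal matrix $\mathrm{diag}(-1,-1,1,\ldots,1) \in \PGL_{n+3}$; since $n \geq 1$ there are $n+3 \geq 4$ homogeneous coordinates, so this matrix has both $-1$ and $+1$ entries and is therefore not scalar, whence $\sigma$ is a non-trivial linear automorphism of $X$. This shows $N$ is non-trivial. The argument is essentially formal once Lemma \ref{lem:Wittenberg} is available; the only genuine inputs are the base-change compatibility of the intermediate Jacobian, used to reduce to $\bar K$, and the simultaneous diagonalisation of the pencil, used to bring $X$ into the diagonal form required by the lemma, which I expect to be the main, though entirely standard, obstacle.
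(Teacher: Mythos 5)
Your proposal is correct and follows essentially the same route as the paper: reduce to an algebraically closed base field, simultaneously diagonalise the pencil (the paper also cites Reid), and observe via Lemma \ref{lem:Wittenberg} that $\sigma_0\sigma_1$ acts as $(-1)^2=\mathrm{id}$ on $J(X)$ while being a non-trivial linear automorphism. The only difference is that you spell out the base-change reduction and the non-scalarity check, which the paper leaves implicit.
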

\begin{proof}
	To prove the result, we may assume that $K=\CC$.
	In which case, we may simultaneously diagonalise both quadrics
	(see \cite[Prop.~2.1]{Rei72}), so that $X$ has the form \eqref{eqn:diagonal}.
	By Lemma \ref{lem:Wittenberg}, the automorphism $\sigma_0 \sigma_1$
	induces the trivial	automorphism of $J(X)$, as required.
\end{proof}

We now show that the analogue of Theorem \ref{thm:arithmetic_torelli}
fails for intersections of two quadrics over suitable fields (this applies to number fields,
or, more generally, to Hilbertian fields).

\begin{proposition}\label{prop:examples_with_same_int_Jac} 
	Let $n$ be odd and assume that $K^*/K^{*2}$ is infinite.
	Then there exist infinitely many non-$K$-linearly isomorphic smooth complete intersections $\{X_i\}_{i \in I}$
	of type $(2,2;n)$ over $K$ such that
	$$J(X_i) \cong J(X_j) \quad \forall \, i,j  \in I,$$
	as principally polarised abelian varieties.
\end{proposition}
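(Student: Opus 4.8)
The plan is to realise the $X_i$ as explicit quadratic twists of a single diagonal intersection, exploiting the non-trivial kernel of $\Lin X \to \Aut J(X)$ provided by Proposition~\ref{prop:two_quadrics}. Fix distinct $a_0,\ldots,a_{n+2} \in K$ and let $X$ be the diagonal complete intersection \eqref{eqn:diagonal}. For $\mathbf c = (c_0,\ldots,c_{n+2}) \in (K^*)^{n+3}$, set
$$X_{\mathbf c}: \quad \textstyle\sum_k c_k x_k^2 = 0, \quad \sum_k a_k c_k x_k^2 = 0 \quad \subset \PP^{n+2}_K,$$
a smooth complete intersection of type $(2,2;n)$ over $K$ which becomes $K$-linearly isomorphic to $X$ over $K(\sqrt{c_0},\ldots,\sqrt{c_{n+2}})$ via $x_k \mapsto \sqrt{c_k}\,x_k$. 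Thus each $X_{\mathbf c}$ is a twist of $X$, and following \S\ref{sec:twists} I would compute its class in $\mathrm H^1(K,\Lin X)$: the associated cocycle is $g \mapsto \prod_k \sigma_k^{\epsilon_k(g)}$, with $\sigma_k$ as in Lemma~\ref{lem:Wittenberg} and $\epsilon_k$ the quadratic character cutting out $K(\sqrt{c_k})$, so the class is that of $\mathbf c \bmod K^{*2}$ for the diagonal sign-change subgroup $\langle \sigma_0,\ldots,\sigma_{n+2}\rangle \subset \PGL_{n+3}$. Since the $X_{\mathbf c}$ are written down over $K$, there is no Brauer obstruction to them being genuine complete intersections.

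I would then use that $J$ is a morphism of stacks \eqref{eqn:int_jac_stacks}, so that twisting is functorial: the class of $J(X_{\mathbf c})$ is the image of $[X_{\mathbf c}]$ under $\mathrm H^1(K,\Lin X) \to \mathrm H^1(K,\Aut J(X))$ induced by the homomorphism in \eqref{diag:J(X)}. By Lemma~\ref{lem:Wittenberg} each $\sigma_k$ maps to $[-1] \in \Aut J(X)$, so this image is classified by $\prod_k c_k \in K^*/K^{*2}$. Consequently, whenever $\prod_k c_k \in K^{*2}$ the image is trivial and $J(X_{\mathbf c}) \cong J(X)$ as principally polarised abelian varieties. (One may alternatively verify this through Reid's hyperelliptic curve, whose Weierstrass model $y^2 = \prod_k c_k \cdot \prod_k(\lambda - a_k)$ is independent of $\mathbf c$ up to isomorphism once $\prod_k c_k$ is a square.)

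It remains to find infinitely many pairwise non-$K$-linearly-isomorphic members of $\{X_{\mathbf c} : \prod_k c_k \in K^{*2}\}$; by \S\ref{sec:twists} this amounts to producing infinitely many distinct classes in $\mathrm H^1(K,\Lin X)$. To this end I would first determine $\Lin X$ for a suitable $\mathbf a$. Any linear automorphism preserves the pencil $\langle Q_0,Q_1\rangle$ and hence permutes its $n+3$ singular members, parametrised by $\{a_k\} \subset \PP^1$; choosing $\mathbf a$ so that this set has trivial stabiliser in $\PGL_2(\bar K)$ (possible as $K$ is infinite and $n+3 \geq 4$) forces the induced action on the pencil to be trivial, whence the automorphism fixes every coordinate vertex, hence is diagonal, hence a sign change. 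Thus $\Lin X = \langle \sigma_0,\ldots,\sigma_{n+2}\rangle \cong \mu_2^{\,n+2}$ is constant and abelian, giving $\mathrm H^1(K,\Lin X) \cong (K^*/K^{*2})^{n+2}$; the subgroup with $\prod_k c_k \in K^{*2}$ is the image of $\mathrm H^1(K,N) \cong (K^*/K^{*2})^{n+1}$, where $N = \ker(\Lin X \to \Aut J(X))$ is the even part, and this is infinite by hypothesis. Selecting one $X_{\mathbf c}$ from each such class yields the desired family.

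The twist bookkeeping and the Kummer-theoretic identifications are routine. The two steps needing genuine care are the functoriality of $J$ under twisting — that a cocycle landing in $\ker(\Lin X \to \Aut J(X))$ leaves $J(X)$ unchanged, which I justify from $J$ being a morphism of stacks — and, more seriously, the computation of $\Lin X$, since the non-isomorphism of the twists hinges on $\Lin X$ being no larger than the sign-change group. I expect controlling the full linear automorphism group via the singular quadrics of the pencil to be the main obstacle.
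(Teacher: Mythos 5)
Your overall strategy coincides with the paper's: both arguments take the diagonal intersection \eqref{eqn:diagonal}, produce diagonal twists of it, use Lemma~\ref{lem:Wittenberg} together with the functoriality of $J$ (diagram \eqref{diag:J(X)}) to see that cocycles whose ``product of signs'' is trivial leave $J(X)$ unchanged, and then must exhibit infinitely many distinct classes in $\mathrm{H}^1(K,\Lin X)$. Up to that last step your bookkeeping is correct. The last step, however, is where you diverge from the paper, and it contains a genuine gap. You propose to prove that $\Lin X$ is exactly the sign-change group $\langle\sigma_0,\dots,\sigma_{n+2}\rangle$ by choosing $\mathbf a$ so that the $n+3$ singular members of the pencil, viewed as a subset of $\PP^1$, have trivial stabiliser in $\PGL_2(\bar K)$, claiming this is possible ``as $K$ is infinite and $n+3\geq 4$''. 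This is false for $n=1$: any four distinct points of $\PP^1$ are preserved by a Klein four subgroup of $\PGL_2$ (the double transpositions, since the cross-ratio is invariant under them), so no choice of $\mathbf a$ gives trivial stabiliser when $n+3=4$. The conclusion you need genuinely fails there: a smooth $(2,2;1)$ is a genus-one quartic in $\PP^3$, and its group of linear automorphisms contains the translations by the $4$-torsion of its Jacobian, so it is strictly larger than the sign-change group of order $8$. Since the proposition is asserted for all odd $n$, including $n=1$, your proof as written does not cover it; and even for $n\geq 3$ (where one needs at least $5$ points, not $4$, and genericity does suffice) the trivial-stabiliser claim is asserted rather than proved.

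The paper sidesteps the computation of $\Lin X$ entirely --- precisely the step you flag as ``the main obstacle''. Instead of showing that distinct $\mathbf c$-classes stay distinct in $\mathrm{H}^1(K,\Lin X)$ by identifying $\Lin X$, it takes the subgroup $A=\langle\sigma_0\sigma_1\rangle\subset\ker(\Lin X\to\Aut J(X))$, notes that $\mathrm{H}^1(K,A)\cong K^*/K^{*2}$ is infinite, and invokes the standard twisting argument \cite[Cor.~I.5.4.2]{SerreGaloisCohomology}: since $\Lin X$ is finite (Lemma~\ref{lem: isomscheme}), the fibres of $\mathrm{H}^1(K,A)\to\mathrm{H}^1(K,\Lin X)$ are finite, so an infinite source has infinite image. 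This works uniformly in $n$, requires no genericity of $\mathbf a$, and repairs your argument verbatim: replace your determination of $\Lin X$ by this finiteness-of-fibres argument applied to your even subgroup $N$ (or simply to $A$ itself, which already yields the one-parameter family $bx_0^2+bx_1^2+x_2^2+\cdots+x_{n+2}^2=0$, $ba_0x_0^2+ba_1x_1^2+a_2x_2^2+\cdots+a_{n+2}x_{n+2}^2=0$ for $b\in K^*$, exactly the examples the paper writes down).
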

\begin{proof}
	We prove the result by constructing explicit counter-examples. 
	Let $(a_0,\ldots, a_{n+2}) \in K^{n+3}$ be such that 
	$$
		X: \quad x_0^2 + \cdots + x_{n+2}^2  = 0, \quad  a_0x_0^2 + \cdots + a_{n+2}x_{n+2}^2  = 0,
	$$
	is a smooth complete intersection. 
	By the functoriality of the intermediate Jacobian, the twists with the same intermediate Jacobian
	as $X$ are classified by the kernel	of the map
	\begin{equation} \label{eqn:kernel_infinite}
		\mathrm{H}^1(K, \Lin X) \to \mathrm{H}^1(K, \Aut J(X))
	\end{equation}
	of pointed sets. We will show that this kernel is infinite.
	
	Let $\sigma_i$ be as in Lemma \ref{lem:Wittenberg} and let $A$ be the subgroup scheme of $\Lin X$
	generated by $\sigma_0\sigma_1$. By Lemma \ref{lem:Wittenberg} we have $A \subset \ker(\Lin X \to \Aut J(X))$,
	hence the image of $\mathrm{H}^1(K,A)$ in $\mathrm{H}^1(K, \Lin X)$ lies inside
	the kernel of \eqref{eqn:kernel_infinite}.
	However $\mathrm{H}^1(K, A) \cong K^*/K^{*2}$ is infinite by assumption, and the fact that its image
	in $\mathrm{H}^1(K, \Lin X)$ is also infinite follows from a standard twisting argument \cite[Cor.~I.5.4.2]{SerreGaloisCohomology}, as $\Lin X$ is finite (Lemma \ref{lem: isomscheme}). 
	The twists of $X$ obtained this way are explicitly given by
	$$
		bx_0^2 + bx_1^2 + x_2^2 + \cdots + x_{n+2}^2 = 0, \, \,
		ba_0x_0^2 + ba_1x_1^2 + a_2x_2^2 + \cdots + a_{n+2}x_{n+2}^2  = 0,
	$$
	for $b \in K^*$. Our proof shows that they all have the same intermediate Jacobian $J(X)$, yet give rise
	to infinitely many $K$-linear isomorphism classes.
\end{proof}

 We now show that the analogue of Proposition \ref{prop:12} 
fails for intersections of two quadrics.

\begin{corollary} \label{cor:not_rep}
	Let $T = (2,2;n)$ with $n$ an odd positive integer.
	Then the morphism of stacks
	$$J: \quad \mathcal{C}_{T,\QQ} \to \mathcal{A}_{g(T),1, \QQ}$$
	is not representable by algebraic spaces.
\end{corollary}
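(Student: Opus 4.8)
The plan is to argue by contradiction, turning the non-faithfulness established in Proposition \ref{prop:two_quadrics} into an obstruction to representability. Recall that if $J$ were representable by algebraic spaces, then for every scheme $S$ and every morphism $S \to \mathcal{A}_{g(T),1,\QQ}$ the fibre product $\mathcal{C}_{T,\QQ} \times_{\mathcal{A}_{g(T),1,\QQ}} S$ would be an algebraic space; in particular every object of this fibre would have trivial automorphism group scheme. This is just the necessary-condition direction of the criterion exploited in the proof of Proposition \ref{prop:12}, where injectivity of $\Lin X \to \Aut J(X)$ was used (via \cite[Tag 04Y5]{stacks-project}) to deduce that the geometric fibres of $J$ are algebraic spaces.

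First I would fix a witness: over $\QQ$ (or any field $K$ of characteristic $0$) choose $a_0, \ldots, a_{n+2} \in K$ so that the diagonal pencil \eqref{eqn:diagonal} defines a smooth complete intersection $X$ of type $(2,2;n)$, which is then an object of $\mathcal{C}_{T,\QQ}(K)$. I would then take $S = \Spec K$ together with the morphism $S \to \mathcal{A}_{g(T),1,\QQ}$ classifying the principally polarised abelian variety $J(X)$, and examine the tautological object $(X, \mathrm{id}_{J(X)})$ of the fibre $\mathcal{C}_{T,\QQ} \times_{\mathcal{A}_{g(T),1,\QQ}} \Spec K$ over $\Spec K$. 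Unwinding the definition of the fibre product of stacks, its automorphism group scheme is precisely $\ker\!\left(\Lin X \to \Aut J(X)\right)$, the group of linear automorphisms of $X$ inducing the identity on $J(X)$. By Proposition \ref{prop:two_quadrics} this kernel is non-trivial --- indeed the class $\sigma_0\sigma_1$ of Lemma \ref{lem:Wittenberg} is a non-trivial $K$-rational element of it. Hence the fibre carries an object with a non-trivial automorphism and so is not an algebraic space, contradicting the assumption and proving the corollary.

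The substantive content has all been supplied by Lemma \ref{lem:Wittenberg} and Proposition \ref{prop:two_quadrics}, so the only point demanding care --- and what I regard as the crux of the write-up --- is the identification of the automorphism group of the tautological object of the fibre with $\ker(\Lin X \to \Aut J(X))$: an automorphism of $(X, \mathrm{id}_{J(X)})$ is by definition a linear automorphism $\alpha$ of $X$ whose image $J(\alpha)$ is compatible with the chosen isomorphism $\mathrm{id}_{J(X)}$, i.e.\ satisfies $J(\alpha) = \mathrm{id}$. Here I would use that the automorphisms of $X$ as an object of $\mathcal{C}_{T,\QQ} = [\PGL_{n+c+1}\backslash \mathrm{Hilb}_T]$ are exactly $\Lin X$, and that the homomorphism induced by $J$ on these automorphisms is the natural map $\Lin X \to \Aut J(X)$ of Proposition \ref{prop:two_quadrics} by functoriality of the intermediate Jacobian \eqref{eqn:int_jac_stacks}. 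Once this dictionary is in place the argument is purely formal.
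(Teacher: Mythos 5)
Your proposal is correct and takes essentially the same route as the paper: the paper's proof likewise rests entirely on Proposition \ref{prop:two_quadrics}, deducing non-representability directly from the implication in \cite[Tag 04Y5]{stacks-project} that a morphism of algebraic stacks representable by algebraic spaces must induce injections on automorphism groups of objects over geometric points. Your write-up simply proves that easy direction of the criterion by hand, via the fibre product over $\Spec K$ classifying $J(X)$ and the identification of the automorphism group of the tautological object with $\ker(\Lin X \to \Aut J(X))$, which is a valid (and more self-contained) unwinding of the same argument.
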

\begin{proof}
	This follows immediately from   Proposition \ref{prop:two_quadrics} and the implication $(3)\implies (1)$ in \cite[Tag 04Y5]{stacks-project}.
\end{proof}

\begin{remark}
	Corollary \ref{cor:not_rep} gives a ``conceptual explanation'' for why the arithmetic Torelli theorem fails
	for odd-dimensional complete intersections of two quadrics. Namely that the intermediate Jacobian,
	viewed as a morphism of stacks, is not representable.
\end{remark}

\section{Good reduction of complete intersections} \label{Sec:Good}
In this section we define the notion of good reduction for complete intersections and study its basic properties.
The main result (Theorem \ref{thm: twists}) states that, under suitable conditions, a complete
intersection admits only finitely many twists with good reduction. This allows one to reduce to showing that
there are only finitely many $\bar K$-isomorphism classes (rather than $K$-isomorphism classes) 
with good reduction over $B$, when considering problems of Shafarevich-type for complete intersections.

\subsection{Preliminary finiteness theorems}\label{section: classical}
We begin by gathering some classical finiteness results.

\begin{definition} \label{def:smooth_reduction}
Let $B$ be an integral scheme with function field $K$ and let $X$ be
 a proper variety over $K$. A \emph{model for $X$ over $B$} is a flat proper $B$-scheme
 $\mathcal X\to B$ together with a choice of isomorphism $\mathcal{X}_K \cong X$.
 We say that
 \begin{enumerate}
\item 	 $X$ has \textit{smooth reduction at a point $v$} if $X$ has a smooth model over the localisation
	$B_v$ of $B$ at $v$.
\item  $X$ has \textit{smooth reduction over $B$} if
	$X$ has smooth reduction at all points of codimension one of $B$.
 \end{enumerate} 
\end{definition}

Let $B$ be an arithmetic scheme with function field $K$ and structure sheaf $\mathcal O_B$. For example, $K$ is a number field and $B=\Spec \mathcal O_K[S^{-1}]$ with $S$ a finite set of finite places of $K$. 
The first finiteness result we state is a generalisation of the Hermite-Minkowski theorem for number fields to  arithmetic schemes.

\begin{theorem}[Hermite-Minkowski] \label{thm:HM} Let $d$ be an integer. Then there are only finitely many field extensions $L/K$ of degree $d$ such that, for all $v$ in $B$ of codimension one, the field extension $L/K$ is unramified over the discrete valuation ring  $\mathcal O_{B,v}$.
\end{theorem}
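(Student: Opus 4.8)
The plan is to reduce the statement over a general one-dimensional arithmetic scheme $B$ to the classical Hermite--Minkowski theorem over number fields, which is already known. The key structural input is the observation recorded in the Conventions: since $B$ is a one-dimensional arithmetic scheme, there exist a number field $K$ and a finite set of finite places $S$ of $K$ such that $B \cong \Spec \OO_K[S^{-1}]$. So without loss of generality I would assume $B = \Spec \OO_K[S^{-1}]$ and work with the ring of $S$-integers directly.

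First I would translate the ramification hypothesis into the language of places. A finite extension $L/K$ of degree $d$ is unramified over the discrete valuation ring $\OO_{B,v}$ for every codimension-one point $v$ of $B$ precisely when $L/K$ is unramified at every finite place of $K$ lying outside $S$ (the codimension-one points of $\Spec \OO_K[S^{-1}]$ are exactly the nonzero primes of $\OO_K$ not in $S$). Thus the set to be bounded is the set of degree-$d$ extensions $L/K$ unramified outside the finite set $S$ of finite places. At the finitely many places in $S$, and at the infinitely many archimedean places, no condition is imposed, but these form a finite set once archimedean places are included in the allowed ramification locus.

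Next I would invoke the classical Hermite--Minkowski finiteness theorem for number fields in its standard form: for a number field $K$, an integer $d$, and a finite set $S'$ of places of $K$, there are only finitely many extensions $L/K$ of degree $\leq d$ unramified outside $S'$. This is standard; see for instance Neukirch's or Serre's treatment, and it follows from the finiteness of the discriminant (the different, and hence the discriminant, is supported on the ramified primes, and its exponent at each ramified prime is bounded in terms of $d$, so there are only finitely many possible discriminants, whence finitely many fields). I would apply this with $S'$ equal to $S$ together with all archimedean places of $K$, a finite set. This immediately yields the desired finiteness.

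The main obstacle here is essentially bookkeeping rather than mathematical depth: one must correctly match the scheme-theoretic notion of being ``unramified over the discrete valuation ring $\OO_{B,v}$'' for all codimension-one $v$ with the arithmetic notion of being unramified outside a finite set of finite places, and one must be careful that the classical theorem is being invoked with the right set of allowed ramified places (in particular including the archimedean places, where the classical statement imposes no restriction). Since the classical Hermite--Minkowski theorem is precisely of this shape, the reduction is routine once the dictionary between $B$ and $\Spec \OO_K[S^{-1}]$ is in hand, and no genuinely new ideas are required.
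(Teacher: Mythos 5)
There is a genuine gap: you have proved only a special case of the statement. The theorem is formulated for an arbitrary \emph{arithmetic scheme} $B$, which in this paper means any integral regular finite type flat $\ZZ$-scheme, of any dimension; the sentence from the Conventions that you invoke (``there exist a number field $K$ and a finite set of finite places $S$ with $B \cong \Spec \OO_K[S^{-1}]$'') applies only to \emph{one-dimensional} arithmetic schemes, and one-dimensionality is not a hypothesis here --- it is merely offered as an example in the paragraph preceding the theorem. For a higher-dimensional $B$ (say $B = \mathbb{A}^1_{\ZZ} = \Spec \ZZ[t]$), the function field $K$ is a finitely generated extension of $\QQ$ of positive transcendence degree (here $\QQ(t)$), and the classical Hermite--Minkowski theorem for number fields, which is the engine of your argument, simply does not apply to such $K$. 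This is not a harmless restriction: the paper's later uses of the theorem (Lemma \ref{lem:HM_cohomology}, Theorem \ref{thm: twists}, and the descent step in Theorem \ref{thm: lis}) are precisely over arithmetic schemes of arbitrary dimension, in line with the stated goal of generalising from number fields to finitely generated fields of characteristic zero.

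The paper's proof supplies the two ingredients your reduction is missing. First, instead of the number-field Hermite theorem it uses Faltings's extension of it to arithmetic schemes \cite[p.~209]{FaltingsComplements}: after shrinking so that $B$ is affine and smooth over $\ZZ$, the scheme $B$ admits only finitely many finite \'etale covers of degree $d$. Second, to match the hypothesis ``unramified over $\OO_{B,v}$ for all codimension-one $v$'' with ``\'etale cover of $B$'', it invokes Zariski--Nagata purity of the branch locus \cite[Cor.~X.3.3]{SGA1}: since $B$ is regular, an extension $L/K$ unramified in codimension one yields a normalization of $B$ in $L$ that is finite \'etale over $B$, so the field extensions in question are exactly the function fields of connected degree-$d$ \'etale covers. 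Your translation between codimension-one points and finite places, and the application of classical Hermite--Minkowski, is correct and settles the case $\dim B = 1$ (and is essentially how the general case ultimately reduces), but the passage to arbitrary dimension --- via purity and Faltings's finiteness for \'etale covers --- is the actual content of the theorem as stated and is absent from your proposal.
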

\begin{proof}  We may assume that $B$ is affine and smooth over $\ZZ$. 
By a well-known consequence of Hermite's classical finiteness theorem  \cite[p.~209]{FaltingsComplements}
the scheme $B$ has only finitely many finite \'etale covers of degree $d$. 
The result therefore follows from  Zariski-Nagata purity of the branch locus \cite[Cor.~X.3.3]{SGA1}.
\end{proof}

\begin{theorem}[Siegel]\label{thm:units}  
Suppose that $K$ is a number field and let $S$ be a finite set of finite places of $K$.
Then the equation \[x+y = 1\] has only finitely many solutions with $x,y \in \OO_K[S^{-1}]^{*}$.
\end{theorem}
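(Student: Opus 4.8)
The plan is to deduce this classical theorem of Siegel and Mahler from two ingredients of very different natures: Dirichlet's $S$-unit theorem, which controls the \emph{algebraic} structure of the set of solutions, and the Thue--Siegel--Roth theorem, in its $S$-adic form due to Mahler, Ridout and Lang, which supplies the deep \emph{analytic} input from Diophantine approximation. Writing $U = \OO_K[S^{-1}]^*$, Dirichlet's theorem tells us that $U$ is a finitely generated abelian group whose torsion subgroup is the finite group of roots of unity in $K$. The strategy is to assume for contradiction that $x + y = 1$ admits infinitely many solutions with $x,y \in U$, necessarily of unbounded height, and to engineer from them an approximation of a fixed algebraic number that is too sharp to be permitted by Roth's theorem.

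The first step is to exploit finite generation to normalise the solutions. Fix an integer $m \geq 3$. Since $U/U^m$ is finite, among infinitely many solutions there are infinitely many lying in a single coset, so after passing to this subfamily we may write $x = c\,w^m$ with $c$ taken from a fixed finite set of representatives and $w \in U$; note that $H(w)$ still tends to infinity. Choosing $\gamma$ with $\gamma^m = c$ in a finite extension $K'/K$ and enlarging $S$ to the set $S'$ of places of $K'$ lying above $S$, the identity
\[
1 - z^m = \prod_{\zeta^m = 1}(1 - \zeta z),
\]
applied with $z = \gamma w$, expresses the $S'$-unit $y = 1 - x$ as a product of the linear forms $1 - \zeta\gamma w$. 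A valuation-theoretic analysis at the places outside $S'$ shows that, up to controlled denominators, each factor $1 - \zeta\gamma w$ is itself an $S'$-unit; equivalently, $w$ is an $S'$-integer for which $w - \alpha_\zeta$ is an $S'$-unit at each of the $m \geq 3$ distinct algebraic numbers $\alpha_\zeta = (\zeta\gamma)^{-1}$.

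The final step is to apply the $S$-adic Roth theorem. The point of passing to $m$-th roots is that a solution $x$ of height $H$ yields a $w$ of height $\approx H^{1/m}$, while the constraint that $\prod_\zeta(1 - \zeta\gamma w)$ remains an $S'$-unit of height $\approx H$ forces $w$ to approximate one of the fixed numbers $\alpha_\zeta$ to an order growing like $m$ relative to $H(w)$. As soon as $m \geq 3$ this exceeds the exponent $2 + \varepsilon$ that Roth's theorem allows, and since $H(w) \to \infty$ along the infinite family we obtain the desired contradiction; hence there are only finitely many solutions. I expect the main obstacle to be exactly this Diophantine approximation input: Roth's theorem and its $S$-adic refinement constitute the deep and (in this form) ineffective core of the argument, whereas the reduction steps --- finite generation, the pigeonhole passage to a fixed coset, and the bookkeeping with heights and valuations --- are technical but elementary. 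I would remark in closing that an entirely effective proof is available via Baker's theory of linear forms in logarithms, and that the several-variable generalisation $x_1 + \cdots + x_r = 1$ requires Schmidt's Subspace Theorem in place of Roth; for the two-term equation at hand, however, Roth's theorem suffices.
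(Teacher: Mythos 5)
The paper does not prove this statement itself; its ``proof'' is just the citation \cite[Thm.~IX.4.1]{Silverman}, and your argument is precisely the standard proof given in that reference: Dirichlet's $S$-unit theorem to pass, by pigeonhole, to a fixed coset of $m$-th powers, followed by the multi-place ($S$-adic) Roth theorem applied to the resulting too-good approximations of the numbers $\alpha_\zeta$. Your sketch is correct as it stands --- in particular $m\geq 3$ does suffice, exactly because you invoke Roth in its product form over all places of $S'$ rather than at a single place --- so it matches the approach the paper relies on.
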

\begin{proof}
	See e.g.~\cite[Thm.~IX.4.1]{Silverman}.
\end{proof}

\begin{theorem}[Faltings] \label{thm: Faltings} Let $g \in \NN$. Then the set of $K$-isomorphism classes of $g$-dimensional principally polarised abelian varieties over $K$ with smooth reduction  over $B$ is finite.
\end{theorem}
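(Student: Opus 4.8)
The plan is to reduce the statement to Faltings's classical finiteness theorem for abelian varieties over number fields \cite{Faltings2} by means of a good-reduction extension argument. First I would observe that, since $B$ is regular, a $g$-dimensional principally polarised abelian variety $A$ over $K$ with smooth reduction over $B$ extends to a principally polarised abelian scheme over all of $B$: spreading out $A$ yields an abelian scheme over some dense open $U \subseteq B$; at each codimension-one point $v \notin U$ the good reduction hypothesis supplies an abelian scheme over the discrete valuation ring $\OO_{B,v}$ (the Néron model being proper), and by uniqueness of Néron models these glue with the family over $U$ to an abelian scheme over the complement of a closed subset of codimension at least two; one then extends across this locus using the purity theorem for abelian schemes over a regular base, and the polarisation extends by the same reasoning. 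Because such an extension is unique, two principally polarised abelian varieties over $K$ with smooth reduction over $B$ are $K$-isomorphic if and only if their extensions are $B$-isomorphic, and so the set in question injects into $[\mathcal{A}_{g,1}(B)]$. It therefore suffices to prove that $[\mathcal{A}_{g,1}(B)]$ is finite.

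Next I would reduce to the one-dimensional case. It is harmless to shrink $B$ to a dense open subscheme $U$: good reduction over $U$ is a weaker hypothesis than good reduction over $B$, since fewer codimension-one points must be checked, so finiteness for $U$ implies finiteness for $B$. When $\dim B = 1$ we have $B \cong \Spec \OO_K[S^{-1}]$ for a number field $K$ and a finite set of places $S$; an abelian scheme over $B$ is precisely a principally polarised abelian variety over $K$ with good reduction outside $S$, and the finiteness of $[\mathcal{A}_{g,1}(B)]$ is exactly Faltings's theorem \cite{Faltings2}. For $\dim B > 1$ I would induct on $\dim B$: after shrinking, choose a smooth fibration $B \to C$ over a lower-dimensional arithmetic scheme with one-dimensional fibres, and pass to the generic fibre to relate abelian schemes over $B$ to families over a curve over the smaller function field $K_C$, thereby lowering the transcendence degree of the function field until the base case of number fields is reached.

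The main obstacle is this last dimension reduction, which amounts to upgrading Faltings's theorem from number fields to finitely generated fields of characteristic zero. The essential rigidity input, which I would isolate first, is that for principally polarised abelian schemes the functor $\Isom_B(\mathcal{A},\mathcal{A}')$ is represented by a scheme finite and unramified over $B$, and that a homomorphism of abelian schemes over the regular base $B$ is determined by, and extends from, its restriction to the generic fibre. This makes a $B$-isomorphism equivalent to a polarisation-preserving $K$-isomorphism, so that the problem becomes one of controlling $K$-isomorphism classes by finitely many specialisations to arithmetic curves $B_0 \subseteq B$; on each such curve Faltings's theorem applies, and the Hermite--Minkowski theorem (Theorem \ref{thm:HM}) controls the finitely many fields of definition that arise. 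Assembling these specialisations into a genuine finiteness statement over $K$ --- ensuring that distinct $K$-isomorphism classes are separated by the chosen curves --- is the delicate point, and is where the induction on transcendence degree must be carried out with care.
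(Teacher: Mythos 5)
Your opening reduction is sound, and it in fact mirrors the structure of Faltings's own argument: smooth reduction at a codimension-one point forces good reduction of the abelian variety there (N\'eron--Ogg--Shafarevich), the resulting abelian schemes over the $\OO_{B,v}$ glue with a spread-out model over a dense open, and purity for abelian schemes over the regular base $B$ extends the family and its principal polarisation everywhere, so the set in question injects into $[\mathcal{A}_{g,1}(B)]$. The case $\dim B = 1$ is then exactly \cite{Faltings2}, as you say. But be aware that the paper does not prove the higher-dimensional case either: its proof is a citation of \cite[p.~205, Thm.~2]{FaltingsComplements}, i.e.\ Faltings's extension of his finiteness theorem to finitely generated fields of characteristic zero. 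Your proposal attempts to re-derive that extension from the number field case, and this is where there is a genuine gap.

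Concretely, your induction does not close up. Shrinking $B$ to a dense open never lowers $\dim B$, so all the work must be done by the fibration $B \to C$; yet passing to its generic fibre does not lower the transcendence degree of the field over which your abelian varieties live. The generic fibre $B_\eta$ is a curve over the function field $K_C$ of $C$, and its own function field is still $K$, so what you need is finiteness of abelian varieties over $K$ with good reduction at every closed point of the curve $B_\eta/K_C$. That is a Shafarevich-type statement over the function field of a curve over the non-closed field $K_C$ --- not an instance of your inductive hypothesis --- and it cannot be obtained by bare specialisation: one must separate isotrivial families (to be descended, after finite base changes controlled by Hermite--Minkowski and a twisting argument, to abelian varieties over $K_C$, where induction applies) from non-isotrivial ones, which require a rigidity/Arakelov--Parshin type finiteness theorem that appears nowhere in your sketch. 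Your alternative route via specialisation to arithmetic curves $B_0 \subset B$ founders on the injectivity problem you yourself flag: non-isomorphic abelian varieties over $K$ can become isomorphic after specialisation, and the finiteness and unramifiedness of $\Isom_B(\mathcal{A},\mathcal{A}')$ only identifies $B$-isomorphism with $K$-isomorphism; it does nothing to make specialisation injective on isomorphism classes. Controlling exactly this is the content of Faltings's theorem over finitely generated fields (via the Tate conjecture, semisimplicity, and finiteness of isogeny classes over such fields), so the ``delicate point'' you defer is not a detail but the whole theorem: as written, your argument establishes the statement only when $\dim B = 1$.
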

\begin{proof} If $\dim B=1$, this theorem is the subject of  \cite{Faltings2}. %  and \cite{Szpiroa}.
 In its full generality, the theorem is proven in  \cite[p.~205, Thm.~2]{FaltingsComplements}.
\end{proof}

The next lemma is a consequence of the theorem of Hermite-Minkowski.

\begin{lemma} \label{lem:HM_cohomology}
	Let $\mathcal{G}$ be a finite  \'{e}tale group scheme over $B$ with generic fibre $G$.
	Then the set
	$$
	\bigcap_{v \in B^{(1)}}\im\left( \mathrm{H}^1(B_v,\mathcal{G}_v) \to \mathrm{H}^1(K,G) \right)
	$$
	is finite.
\end{lemma}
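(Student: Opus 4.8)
The plan is to reduce the statement to the Hermite--Minkowski finiteness theorem (Theorem~\ref{thm:HM}) by showing that every cohomology class lying in the intersection is ``unramified everywhere'' and hence corresponds to a field extension of bounded degree unramified over each $\mathcal{O}_{B,v}$. First I would reduce to the case where $\mathcal{G}$ is constant: since $\mathcal{G}$ is a finite \'{e}tale group scheme over $B$, after passing to a finite \'{e}tale cover $B' \to B$ corresponding to the splitting of $\mathcal{G}$, it becomes a constant group scheme associated to a finite group $G$. I must keep track of how the intersection set behaves under this base change, but the key point is that $\mathrm{H}^1(K,G)$ classifies $G$-torsors, equivalently (for constant $G$) conjugacy classes of homomorphisms $\Gal(\bar K/K) \to G$ together with the induced $G$-cover, so a class is given by a finite Galois extension $L/K$ together with an embedding of $\Gal(L/K)$ into $G$.

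The heart of the argument is the local-to-global ramification control. Fix a class $\xi \in \mathrm{H}^1(K,G)$ lying in the intersection, so that for every $v \in B^{(1)}$ the class $\xi$ is in the image of $\mathrm{H}^1(B_v, \mathcal{G}_v) \to \mathrm{H}^1(K,G)$. The scheme $B_v = \Spec \mathcal{O}_{B,v}$ is the spectrum of a discrete valuation ring, and $\mathrm{H}^1(B_v,\mathcal{G}_v)$ classifies $\mathcal{G}_v$-torsors over $B_v$ for the \'{e}tale topology, i.e.\ torsors that extend to the whole local ring, not just its function field. Because $\mathcal{G}$ is finite \'{e}tale, such a torsor is itself finite \'{e}tale over $B_v$, and therefore unramified over $\mathcal{O}_{B,v}$. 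Thus the condition that $\xi$ lies in $\im(\mathrm{H}^1(B_v,\mathcal{G}_v) \to \mathrm{H}^1(K,G))$ forces the corresponding cover of $\Spec K$ to extend to a finite \'{e}tale cover of $B_v$; in terms of the field $L$ cut out by $\xi$, this says precisely that $L/K$ is unramified over the discrete valuation ring $\mathcal{O}_{B,v}$. Requiring this for \emph{all} $v \in B^{(1)}$ places us exactly in the hypothesis of Theorem~\ref{thm:HM}.

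Next I would bound the degree. Since $G$ is a fixed finite group, the extension $L/K$ attached to any class $\xi$ (with $\Gal(L/K) \hookrightarrow G$) has degree $[L:K]$ dividing $|G|$, hence bounded independently of $\xi$. Applying Theorem~\ref{thm:HM} with $d = |G|$ (summing over the finitely many possible degrees $d \mid |G|$), there are only finitely many such fields $L$ arising from classes in the intersection. For each fixed $L$, the number of classes $\xi$ mapping to it is controlled by the finite set of homomorphisms $\Gal(L/K) \to G$ up to $G$-conjugacy, which is finite because both $\Gal(L/K)$ and $G$ are finite. Combining these two finiteness statements yields that the intersection set is finite.

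The main obstacle I anticipate is the bookkeeping in the non-constant case: one must check that the ``unramified everywhere'' property and the degree bound descend correctly, i.e.\ that reducing to the split form of $\mathcal{G}$ over a finite \'{e}tale cover $B'\to B$ does not lose the finiteness, and that a class over $K$ unramified over every $\mathcal{O}_{B,v}$ still corresponds, after this reduction, to a field extension that Theorem~\ref{thm:HM} controls. The clean way to handle this is to note that $\mathcal{G}$ becomes constant over $B'$, that the splitting field $B'\to B$ is itself a fixed finite \'{e}tale cover, and that the formation of the intersection is compatible with this base change since $B'_{v'} \to B_v$ is again finite \'{e}tale for $v'$ over $v$; one then applies Theorem~\ref{thm:HM} over the function field of $B'$, where $\mathcal{G}$ is constant, and transfers the finiteness back via the finiteness of $B'\to B$. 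Once this compatibility is pinned down, the rest is a direct appeal to Hermite--Minkowski together with the finiteness of $\Hom(\Gal(L/K), G)$.
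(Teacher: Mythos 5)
Your proposal is correct and takes essentially the same approach as the paper's proof: reduce to the case where the group scheme is constant, identify classes in the intersection with finite extensions of $K$ of degree at most $|G|$ that are unramified over every $\mathcal{O}_{B,v}$, and conclude via Hermite--Minkowski (Theorem~\ref{thm:HM}) together with the finiteness of $\Hom(\Gal(L/K),G)$ modulo conjugacy. The one step you leave vague---transferring finiteness back through the splitting cover $B'\to B$---is exactly what the paper settles by inflation--restriction: the restriction map $\mathrm{H}^1(K,G)\to\mathrm{H}^1(K',G)$ has finite fibres because $G$ is finite and only finitely many classes trivialise over the fixed finite extension $K'/K$.
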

\begin{proof}
	As $G$ is finite, only finitely many elements of $ \mathrm{H}^1(K,G)$ trivialise over any given finite extension of $K$.
	In particular by inflation-restriction (see \cite[I.5.8(a)]{SerreGaloisCohomology}), we may assume that the action of $\Gal(\bar{K}/K)$ on $G(\bar K)$ is trivial. In which case, for $v \in B^{(1)}$ we have
	\begin{align*}
		\mathrm{H}^1(K,G)&= \Hom_{cts}(\Gal(\bar{K}/K), G(K))/ \Inn G(K), \\
		\mathrm{H}^1(B_v,\mathcal{G})&= \Hom_{cts}(\pi_1(B_v), \mathcal{G}(B_v))/\Inn \mathcal{G}(B_v).
	\end{align*} 
	In particular, the elements of  
	$$
	\bigcap_{v \in B^{(1)}}\im\left( \mathrm{H}^1(B_v,\mathcal{G}_v) \to \mathrm{H}^1(K,G) \right)
	$$
	may be represented by certain isomorphism classes of finite field extensions of $K$ of 
	bounded degree which are moreover unramified at all points of codimension one of $B$. 
	Hence the required finiteness follows Theorem \ref{thm:HM}.
\end{proof}

\subsection{Good reduction: Definitions and basic properties}
We now define good reduction and study its basic properties.

\begin{definition} \label{def:good_ceduction_CI}  Let $B$ be an integral scheme with function field $K$.
    Let $T$ be a type and let $X$ be a  complete intersection of type $T$ over $K$. A \textit{good model for $X$ over $B$}
	is a smooth complete intersection $\mathcal X\to \PP^{n+c}_{B}$ of type $T$ over $B$ together
	with a choice of $K$-linear isomorphism $\mathcal{X}_K \cong X$.
	If  $v$ is a point of $B$, a \textit{good model for $X$ at $v$}
	is a good model for $X$ over the localisation $B_v$ of $B$ at $v$. We say that
\begin{enumerate}
\item $X$ has \textit{good reduction at $v$} if $X$ has a good model at $v$. 
\item $X$ has \textit{good reduction over $B$} if
	$X$ has good reduction at all points of codimension one of $B$.
 \end{enumerate}
\end{definition}
We emphasise that having good reduction (in the sense of Definition \ref{def:good_ceduction_CI}) is stronger in general than having smooth reduction
(in the sense of Definition~\ref{def:smooth_reduction}). Good reduction behaves better than smooth reduction, in part due to its relationship with moduli stacks. These points are nicely illustrated by Lemma \ref{lem:glueing}
and Lemma \ref{lem: infinitely many cubics} below. 

We now record a consequence of Lemma \ref{lem: isomscheme} for the unicity of good models. 

\begin{lemma}\label{lem: un} Let $T\neq (2;n)$ be a type and let 
$B$ be an integral Noetherian regular scheme with function field $K$. If $X$ and $Y$ are smooth  complete intersections of type
$T$ over $B$ such that $X_K$ and $Y_K$ are $K$-linearly isomorphic, then $X$ and $Y$ are $B$-linearly isomorphic.
\end{lemma}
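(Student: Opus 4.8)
The plan is to exploit the finiteness of the Isom-scheme established in Lemma \ref{lem: isomscheme} together with the normality of $B$, reducing the statement to the extension of a generic section over a finite $B$-scheme. First I would observe that, since $T \neq (2;n)$ and $X,Y$ are smooth complete intersections of type $T$ over $B$, Lemma \ref{lem: isomscheme} gives that the structure morphism $\pi \colon \Isom_B(X,Y) \to B$ is finite. The hypothesis furnishes a $K$-linear isomorphism $X_K \cong Y_K$, which is exactly a $K$-point of $\Isom_B(X,Y)$, i.e.\ a section $s \colon \Spec K \to \Isom_B(X,Y)$ of $\pi$ over the generic point $\eta = \Spec K$ of $B$. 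The goal is then to extend $s$ to a section over all of $B$, since such a global section is precisely a $B$-linear isomorphism $X \cong Y$.

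Next I would pass to closures. Let $\xi \in \Isom_B(X,Y)$ be the image point of $s$, and let $Z \subseteq \Isom_B(X,Y)$ be the integral closed subscheme obtained as the reduced closure of $\{\xi\}$. As a closed subscheme of a scheme finite over $B$, the induced morphism $Z \to B$ is finite; since $\xi$ lies over $\eta$ it is dominant, hence surjective. Because $s$ is a section over $\eta$, the residue field of $\xi$ is identified with $K = \kappa(\eta)$, so the generic fibre of $Z \to B$ is $\Spec K$ mapping isomorphically onto $\eta$. In particular $Z \to B$ is a finite birational morphism of integral schemes.

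I would then conclude using normality. As $B$ is regular it is normal, and $Z$ is integral with the same function field $K$ as $B$; a finite birational morphism onto a normal integral scheme is an isomorphism. Concretely, $\pi_* \OO_Z$ is a finite $\OO_B$-subalgebra of the constant sheaf $K$, hence integral over $\OO_B$, hence equal to $\OO_B$ by integral closedness, so $Z \to B$ is an isomorphism. The composite $B \xrightarrow{\ \sim\ } Z \hookrightarrow \Isom_B(X,Y)$ is then a section of $\pi$ extending $s$, and this section is the desired $B$-linear isomorphism $X \cong Y$.

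The crux of the argument is the passage from the generic section $s$ to a global one, which is exactly where the regularity (equivalently, normality) of $B$ is indispensable: it is the analogue, for the finite scheme $\Isom_B(X,Y)$, of the valuative-criterion spreading-out of sections of proper schemes over a normal base. Everything else is formal bookkeeping, given the finiteness input from Lemma \ref{lem: isomscheme}.
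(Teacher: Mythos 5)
Your proof is correct and takes essentially the same route as the paper: Lemma \ref{lem: isomscheme} makes $\Isom_B(X,Y)\to B$ finite, and the $K$-point of the generic fibre then extends to a section over $B$ because $B$ is normal. The paper simply cites Zariski's main theorem (or \cite[Prop.~6.2]{GLL}) for this extension step, which you instead verify directly via the closure-of-the-generic-point and finite-birational-onto-normal argument.
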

\begin{proof}
By Lemma \ref{lem: isomscheme}, the morphism $\Isom_B(X,Y)\to B$ is finite.
Therefore any $K$-rational point of its generic fibre extends
to a section over $B$. This follows from Zariski's main theorem, but also the much stronger statement proven in \cite[Prop.~6.2]{GLL}. 
\end{proof}

The next result shows that having good reduction is closely related to being
an integral point on the moduli stack. This interplay between good reduction
and stacks will occur throughout this paper. 

\begin{lemma}\label{lem:glueing}
	Let $B$ be a Dedekind scheme with function field $K$. Let $T$ be a type and let $X$ be a smooth complete intersection of type $T$ over $K$ which has good reduction over $B$. Then
\begin{enumerate}
\item The $K$-linear isomorphism class of $X$ lies in the 
	image of the map of sets
	$$[\mathcal{C}_T(B)] \to [\mathcal{C}_T(K)].$$
\item If $\Pic(B) = 0$, then $X$ has a good model over $B$.
\end{enumerate}
\end{lemma}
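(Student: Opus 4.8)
The plan is to translate the local condition of having good reduction into a statement about models over the whole Dedekind scheme $B$, and then to glue. For part (1), the hypothesis that $X$ has good reduction over $B$ gives, for each codimension-one point $v \in B^{(1)}$, a good model $\mathcal{X}_v$ over the discrete valuation ring $\mathcal{O}_{B,v}$, i.e.\ a smooth complete intersection of type $T$ over $B_v$ restricting to $X$ over $K$. Each such $\mathcal{X}_v$ determines an object of $\mathcal{C}_T(B_v)$ whose restriction to $\mathrm{Spec}\,K$ is the point of $\mathcal{C}_T(K)$ attached to $X$. The key point is that the collection of these objects over the various $B_v$, all agreeing over the generic point, glues to an object over $B$. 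Since $\mathcal{C}_T$ is a stack (indeed an algebraic stack by Proposition~\ref{prop:moduli_stack}) and $B$ is a Dedekind scheme, I would invoke the fact that a smooth complete intersection, being a $\mathcal{C}_T$-point, extends uniquely across points of codimension $\geq 2$; more precisely, the unicity of good models (Lemma~\ref{lem: un}, applied over the various localisations where two extensions are compared) ensures the local extensions are canonically identified on overlaps, so they descend to an object of $\mathcal{C}_T(B)$ mapping to the class of $X$ in $[\mathcal{C}_T(K)]$.

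The cleanest way to carry this out is to work with the generic point and the codimension-one points together. Writing $U \subset B$ for a dense open over which $X$ already spreads out to a smooth complete intersection (such a $U$ exists by standard spreading-out, shrinking so that $\mathcal{X}_U \to U$ is smooth of type $T$), the complement $B \setminus U$ consists of finitely many codimension-one points, at each of which the good-reduction hypothesis supplies a good model $\mathcal{X}_v$. By Lemma~\ref{lem: un} the two models $\mathcal{X}_U$ and $\mathcal{X}_v$ become $B_v$-linearly isomorphic after restricting to the common localisation, so I can enlarge $U$ to include $v$; iterating over the finitely many such $v$ produces a smooth complete intersection over all of $B$. This already gives part (1), with the resulting object of $\mathcal{C}_T(B)$ furnishing the required preimage. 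I expect the main technical care to lie in checking that the isomorphisms glue consistently, i.e.\ that passing from $\mathcal{C}_T(K)$-points back to genuine models is done coherently; here the separatedness of $\mathcal{C}_T$ (Proposition~\ref{prop:moduli_stack}.(2)) and the finiteness in Lemma~\ref{lem: isomscheme} are what make the gluing unambiguous.

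For part (2), the subtlety is the distinction between a $\mathcal{C}_T(B)$-point and an honest complete intersection over $B$, which is governed by a Brauer/Picard obstruction exactly as in the field case discussed after Proposition~\ref{prop:moduli_stack}. A $B$-point of $\mathcal{C}_T$ corresponds to a pair $(\mathcal{Y}, \mathcal{L})$ where $\mathcal{Y} \to B$ is smooth proper and $\mathcal{L}$ is a section of the relative Picard functor that is fibrewise the complete-intersection polarisation; the obstruction to $\mathcal{L}$ being represented by an actual line bundle on $\mathcal{Y}$ (hence to embedding $\mathcal{Y}$ as a complete intersection in $\PP^{n+c}_B$) lies in a relative Brauer-type group, and the relevant piece is controlled by $\Pic(B)$. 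I would therefore argue that when $\Pic(B) = 0$ this obstruction vanishes, so the $\mathcal{C}_T(B)$-point produced in part (1) upgrades to a genuine good model $\mathcal{X} \to \PP^{n+c}_B$ over $B$ with $\mathcal{X}_K \cong X$.

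The step I expect to be the genuine obstacle is this last descent of part (2): one must verify that the line bundle giving the projective embedding, which is defined fibrewise and over the generic point, actually exists globally over $B$, and that its vanishing obstruction is measured by $\Pic(B)$ rather than by some larger cohomology group. The resolution should come from the Leray/Hochschild--Serre sequence for the relative Picard scheme over $B$ together with the vanishing $\Pic(B) = 0$, analogously to the exact sequence $0 \to \Pic Y \to (\Pic Y_{\bar K})^{\Gal} \to \Br K$ recalled after Proposition~\ref{prop:moduli_stack}; the triviality of $\Pic(B)$ kills the term that would otherwise obstruct descent of $\mathcal{L}$ to a global very ample line bundle, yielding the desired good model over $B$.
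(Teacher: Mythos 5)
Your strategy for part (1) --- spread out the local good models and glue along identifications of the generic fibres --- is the same as the paper's, but your conclusion that iterating the gluing ``produces a smooth complete intersection over all of $B$'' is not justified, and this is the genuine gap. The isomorphisms supplied by Lemma \ref{lem: un} are \emph{linear}, i.e.\ given by sections of $\PGL_{n+c+1}$ over the overlaps, so the gluing identifies not only the subschemes but also their ambient projective spaces; what comes out is a smooth proper $B$-scheme sitting inside a \emph{form} of $\PP^{n+c}_B$ (a projective bundle with $\PGL_{n+c+1}$-valued transition functions), equivalently an object of the quotient stack $\mathcal{C}_T(B)$ --- not a closed subscheme of $\PP^{n+c}_B$ cut out by $c$ forms, which is what Definition \ref{def:type} requires. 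Indeed, if your claim were correct, part (2) would hold with no hypothesis on $\Pic(B)$ at all, so something must give; the distinction between a $\mathcal{C}_T(B)$-point and an honest complete intersection, which you only confront in part (2), already arises in your gluing. The paper's proof of (1) accordingly glues only a polarised smooth proper scheme over $B$ and invokes the modular description of $\mathcal{C}_T$ to get the $\mathcal{C}_T(B)$-point. Two smaller issues: Lemma \ref{lem: un} (like Proposition \ref{prop:moduli_stack}.(2) and Lemma \ref{lem: isomscheme}, which you also cite) excludes $T=(2;n)$, whereas the present lemma covers all types --- the paper avoids this by spreading out the generic isomorphism and refining the Zariski cover instead; and since $B$ is one-dimensional there are no points of codimension $\geq 2$ to extend across, so that remark is vacuous.

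For part (2) you gesture at the right objects but misplace the obstruction. The obstruction to representing the class $\mathcal{L} \in \Pic_{\mathcal{Y}/B}(B)$ by an honest line bundle lies in $\Br(B)$, and it vanishes automatically here: $B$ is regular and integral, so $\Br(B) \to \Br(K)$ is injective, and over $K$ the class is represented by $\OO_X(1)$. So $\Pic(B)=0$ is not what kills that obstruction. Where $\Pic(B)=0$ actually enters is afterwards, in two freeness arguments that your sketch omits entirely: by relative very ampleness the polarisation induces an embedding $\mathcal{Y} \hookrightarrow \PP\bigl(h_*\OO_{\mathcal{Y}}(1)\bigr)$, where $h\colon \mathcal{Y}\to B$ is the structure map and $h_*\OO_{\mathcal{Y}}(1)$ is locally free of rank $n+c+1$; one needs $B$ Dedekind with $\Pic(B)=0$ to conclude this sheaf is \emph{free}, so that $\PP\bigl(h_*\OO_{\mathcal{Y}}(1)\bigr)\cong\PP^{n+c}_B$ rather than a nontrivial projective bundle. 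One then still has to produce the $c$ global defining forms of degrees $d_1,\dots,d_c$, which requires the locally free sheaves of forms vanishing on $\mathcal{Y}$ to be free as well --- again via $\Pic(B)=0$. Without the pushforward, the embedding, and the equations, part (2) has not been proved.
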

\begin{proof}
	By assumption $X$ admits a good model at each point of $B$.
	We may spread these models out to obtain a Zariski open
	cover $\{B_i\}_{i\in I}$ of $B$ such that $X$ admits a good model 
	$\mathcal{X}_i$ over each $B_i$.
	The generic fibres of each $\mathcal{X}_i$ are pairwise linearly isomorphic,
	hence, refining the cover if necessary, we may glue these to obtain
	a smooth proper model  $h:\mathcal{X} \to B$ of $X$, together with 
	a line bundle $\mathcal{O}_\mathcal{X}(1)$ which is flat over $B$
	and which induces the hyperplane bundle on each fibre.
	It is now easy to see that $(1)$ holds, using for example the explicit
	description of the functor of points of $\mathcal{C}_T$ given in \cite[\S2.3.2]{BenoistThesis}.
	
	For $(2)$, by \cite[Lem.~1.1.8]{BenoistThesis} the line bundle $\mathcal{O}_\mathcal{X}(1)$
	is relatively very ample hence induces an embedding
	$$\mathcal{X} \hookrightarrow \PP(h_*(\mathcal{O}_\mathcal{X}(1))),$$
	which is Zariski locally on $B$ a complete intersection of type $T$.
	The sheaf $h_*(\mathcal{O}_\mathcal{X}(1))$ is locally free on $B$, and since 
	$B$ is a Dedekind scheme with $\Pic B =0$, we find that it is actually free. Therefore
	$\PP(h_*(\mathcal{O}_\mathcal{X}(1))) \cong \PP^{n+c}_B$.
	
	Let $\pi:\PP^{n+c}_B \to B$ denote the structure morphism.
	To complete the proof, it suffices to show that $\mathcal{X} \subset \PP^{n+c}_B$ is a
	complete intersection over $B$. This follows from the fact that 
	kernel of the epimorphism 
	$$h_*(\mathcal{O}_\mathcal{X}(k)) \to \pi_*(\mathcal{O}_{\PP^{n+c}_B}(k))$$
	is locally free, hence free, for all $k \in \ZZ$
	(see the proof of \cite[Prop.~2.1.12]{BenoistThesis}
	or \cite[Prop.~1.9]{Del72}).	
\end{proof}

If one would like finiteness results
of Shafarevich-type (as in Theorem~\ref{theorem: main theorem}) to hold, one needs to use the
``right'' notion for good reduction. Here we present an example to illustrate this point, which is a variant
of an example considered by Scholl \cite[Rem.~4.6]{Scholl}.
Recall that we say that a smooth
cubic surface over a field is \emph{split} if all $27$ lines are defined over that field.

\begin{lemma}\label{lem: infinitely many cubics} Let $B$ be an integral scheme with function field $K$.
	  Any split cubic surface over $K$ has smooth reduction over $B$.
	In particular, if $K$ is infinite, then there are infinitely many pairwise non $\bar K$-isomorphic
	cubic surfaces over $K$ with smooth reduction over $B$.
\end{lemma}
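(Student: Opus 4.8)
The plan is to treat the two assertions in turn, the substance being the first. Recall the classical fact that a split cubic surface $X$ over $K$ contains six mutually skew lines defined over $K$, and that contracting them realises $X$ as a blow-up $\operatorname{Bl}_{P_1,\dots,P_6}\PP^2_K$ of the projective plane at six $K$-rational points in general position. Since smooth reduction is a condition at the points of codimension one, I would localise and assume $B=\Spec R$ with $R=\mathcal O_{B,v}$; as the schemes to which this is applied (Dedekind and arithmetic schemes) are normal, $R$ is a discrete valuation ring with fraction field $K$, and this is the situation I shall treat.

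First I would spread the six points out. Each $P_i\in\PP^2(K)=\PP^2(R)$ extends, by the valuative criterion of properness, to a section of $\PP^2_R\to\Spec R$. The key device is to blow these up \emph{one at a time}. Suppose inductively that I have blown up the first $j-1$ of them to obtain $\mathcal X_{j-1}\to\Spec R$, smooth and proper over $R$, with generic fibre $\operatorname{Bl}_{P_1,\dots,P_{j-1}}\PP^2_K$. Since $P_j$ is distinct from $P_1,\dots,P_{j-1}$, it defines a $K$-point of the generic fibre of $\mathcal X_{j-1}$ away from the exceptional loci, which by properness and separatedness extends uniquely to a section $\tilde\sigma_j\colon\Spec R\to\mathcal X_{j-1}$. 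Its image is a regularly embedded closed subscheme isomorphic to $\Spec R$, hence smooth over $R$; blowing up a centre smooth over $R$ inside a scheme smooth over $R$ yields $\mathcal X_j\to\Spec R$ again smooth and proper. After six steps $\mathcal X_6\to\Spec R$ is a smooth proper model with generic fibre $\operatorname{Bl}_{P_1,\dots,P_6}\PP^2_K\cong X$, as required.

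The point I want to stress, and the place where an easier approach fails, is that I impose no condition on how the sections specialise: their reductions modulo $v$ may collide or fail to be in general position, and over a small (for instance finite) residue field no six points in general position need even exist. A naive attempt to produce a smooth cubic model, or to blow up six disjoint sections in general position, breaks down precisely here; blowing up sections one at a time sidesteps the difficulty, at the cost that the special fibre need not be a cubic surface. This is exactly why one obtains smooth but not good reduction, and is the main obstacle to navigate.

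For the second assertion, note that the $\bar K$-isomorphism class of $\operatorname{Bl}_{P_1,\dots,P_6}\PP^2$ depends on the configuration $(P_1,\dots,P_6)$ only up to the action of $\PGL_3(\bar K)$ together with the finite Weyl group $W(E_6)$ of Cremona symmetries; in particular each isomorphism class arises from only finitely many $\PGL_3(\bar K)$-orbits of configurations. Since the space of six points in general position modulo $\PGL_3$ is positive-dimensional and $K$ is infinite, there are infinitely many $K$-rational configurations lying in distinct orbits, hence infinitely many pairwise non-$\bar K$-isomorphic split cubic surfaces over $K$. Each is split, so each has smooth reduction over $B$ by the first part, which gives the claim.
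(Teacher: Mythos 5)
Your proof is correct and follows essentially the same route as the paper's: localise at a codimension-one point, extend the six rational points to sections over the local ring, blow them up successively to obtain a smooth proper model, and then vary the configuration to produce infinitely many $\bar K$-isomorphism classes. You are in fact more explicit than the paper on the two delicate points, namely that each blow-up must use the section of the \emph{current} model (so that collisions of specialisations are harmless) and that the finiteness of markings (the $W(E_6)$ ambiguity) is what turns infinitely many point configurations into infinitely many isomorphism classes.
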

\begin{proof} Let $v \in B^{(1)}$. Any split cubic surface $X$ over $K$ is a blow-up of $\PP^2_K$ in a collection
	of $6$ rational points $P_1,\ldots,P_6$ in general position. These  points uniquely extend
	to $B_v$-points of $\PP^2_{B_v}$.
	Blowing-up  these $B_v$-points successively, we obtain a smooth projective model for $X$ at $v$. Thus, as $v$ was arbitrary, the cubic surface $X$ has smooth reduction over $B$.
	As $K$ is infinite, it is clear from this construction that there are infinitely many
	$\bar K$-isomorphism classes amongst split cubic surfaces. 
\end{proof}
Lemma \ref{lem: infinitely many cubics} shows that in our main result (Theorem \ref{theorem: main theorem}), 
the words ``good reduction'' cannot be replaced by
 ``smooth reduction''. Let us emphasise that Lemma \ref{lem: infinitely many cubics}
 does not contradict Theorem~\ref{theorem: main theorem}, as the fibres of the smooth morphisms constructed in 
the lemma will not all be smooth cubic surfaces in general, but only ``weak'' del Pezzo surfaces. 

\subsection{Twists and good reduction}\label{section: twists}

We now show that a complete intersection admits only finitely many twists with good reduction (see Section \ref{sec:twists}), provided the type is not $(2;n)$.
Our proof of this makes use of Hermite-Minkowski for arithmetic schemes,
and the separatedness of the moduli stack.

\begin{theorem}\label{thm: twists} 
    Let $B$ be an arithmetic scheme with function field $K$ and let $X$ be a smooth complete intersection over $K$ of type $T \neq (2;n)$.
    Then the set of $K$-linear isomorphism classes of complete intersections $Y$ of type $T$ with good reduction over $B$ and which are twists of $X$, is finite.
\end{theorem}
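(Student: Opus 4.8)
The plan is to relate twists of $X$ with good reduction to a cohomological finiteness statement, and then apply the Hermite–Minkowski consequence recorded in Lemma \ref{lem:HM_cohomology}. By the discussion in Section \ref{sec:twists}, every twist $Y$ of $X$ is classified by an element $[Y] \in \mathrm{H}^1(K, \Lin_K X)$, namely the class of the $\Lin_K(X)$-torsor $\Isom_K(X,Y)$. So the strategy is to show that the classes $[Y]$ arising from twists with good reduction over $B$ lie in a finite set. Since two twists are $K$-linearly isomorphic precisely when they give the same class, this finiteness of classes yields the desired finiteness of $K$-linear isomorphism classes.

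\textbf{Reduction to an integral automorphism scheme.} First I would spread $X$ out: after possibly shrinking $B$ (which only enlarges the set of twists with good reduction, hence suffices), I may assume $X$ extends to a smooth complete intersection $\mathcal X \to \PP^{n+c}_B$ of type $T$ over $B$. The key geometric input is Lemma \ref{lem: isomscheme}, which gives that $\mathcal G := \Lin_B \mathcal X = \Isom_B(\mathcal X, \mathcal X) \to B$ is \emph{finite}. Since $T \neq (2;n)$ and $B$ is an arithmetic scheme (so of characteristic $0$ generically, with $\mathcal C_T$ Deligne–Mumford over $\ZZ[1/6]$), this finite group scheme $\mathcal G$ is moreover \'etale over $B$ after possibly inverting finitely many primes — again a harmless shrinking of $B$. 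Thus I arrive at a finite \'etale group scheme $\mathcal G$ over $B$ with generic fibre $G = \Lin_K X$, exactly the setting of Lemma \ref{lem:HM_cohomology}.

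\textbf{Good reduction forces the class to extend locally.} The heart of the argument is to show that if $Y$ is a twist of $X$ with good reduction over $B$, then for every $v \in B^{(1)}$ the class $[Y] \in \mathrm{H}^1(K, G)$ lies in the image of $\mathrm{H}^1(B_v, \mathcal G_v) \to \mathrm{H}^1(K, G)$. Indeed, good reduction at $v$ means $Y$ has a good model $\mathcal Y \to B_v$; by Lemma \ref{lem: un} (unicity of good models over the regular base $B_v$) the $B_v$-linear isomorphism class of a good model is determined by its generic fibre, and the torsor $\Isom_{B_v}(\mathcal X, \mathcal Y)$ is a $\mathcal G_v$-torsor over $B_v$ whose generic fibre is $\Isom_K(X,Y)$, representing $[Y]$. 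Hence $[Y]$ lands in that local image. Therefore
$$
[Y] \in \bigcap_{v \in B^{(1)}} \im\!\left( \mathrm{H}^1(B_v, \mathcal G_v) \to \mathrm{H}^1(K, G) \right),
$$
which is finite by Lemma \ref{lem:HM_cohomology}. Since distinct $K$-linear isomorphism classes of twists give distinct classes in $\mathrm{H}^1(K,G)$, the theorem follows.

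\textbf{The main obstacle} I anticipate is the bookkeeping around "spreading out" and the compatibility of the local extension data with a single global class — specifically, making rigorous that good reduction at $v$ yields a $\mathcal G_v$-torsor over $B_v$ restricting to $\Isom_K(X,Y)$, which requires the unicity Lemma \ref{lem: un} and the interpretation of $\mathrm{H}^1(B_v, \mathcal G_v)$ as classifying $\mathcal G_v$-torsors (cited via \cite[Thm.~III.4.3.a)]{MilneEC}). One must also check that the finitely many places inverted to make $\mathcal G$ \'etale and $X$ spread out do not affect the conclusion, which is clear because removing codimension-one points only relaxes the good-reduction condition and leaves the target set in Lemma \ref{lem:HM_cohomology} finite. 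The genuinely new geometric ingredient, finiteness of $\Isom_B(\mathcal X, \mathcal Y) \to B$, is already supplied by the separatedness of $\mathcal C_T$ through Lemma \ref{lem: isomscheme}, so the remaining work is organizing these inputs rather than proving anything deep.
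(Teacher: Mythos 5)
Your proposal follows the paper's own proof almost step for step: spread $X$ out to a good model $\mathcal X$ over a shrunken $B$, use Lemma \ref{lem: isomscheme} to make $\mathcal G = \Lin_B(\mathcal X)$ finite \'etale after shrinking further, show that each twist with good reduction gives a class in $\mathrm{H}^1(K,\Lin_K X)$ lying in the image of $\mathrm{H}^1(B_v,\mathcal G_v)$ for every $v \in B^{(1)}$, and conclude with Lemma \ref{lem:HM_cohomology}. The shrinking arguments, the reduction to a cohomological statement, and the final appeal to Hermite--Minkowski are all correct and match the paper.

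There is, however, a gap at exactly the step you flag as the main obstacle: why is $\Isom_{B_v}(\mathcal X_v, \mathcal Y_v)$ actually a $\Lin_{B_v}(\mathcal X_v)$-torsor for the \'etale topology? A priori it is only a pseudo-torsor (carrying a free and transitive action); to represent a class in $\mathrm{H}^1(B_v,\mathcal G_v)$ it must be locally trivial, and your appeal to Lemma \ref{lem: un} \emph{over $B_v$} does not supply this. Indeed, $Y$ is a twist, so in general $Y$ is \emph{not} $K$-linearly isomorphic to $X$, and unicity of good models over $B_v$ then says nothing about $\Isom_{B_v}(\mathcal X_v,\mathcal Y_v)$ --- its generic fibre $\Isom_K(X,Y)$ may well have no $K$-point at all. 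The paper's mechanism is to choose a finite extension $L/K$ with $Y_L \cong X_L$, let $C_v$ be the normalisation of $B_v$ in $L$ (a regular scheme, finite and flat over the discrete valuation ring $B_v$), and apply Lemma \ref{lem: un} \emph{over $C_v$}: the $L$-point of the Isom scheme extends to a $C_v$-point, so the Isom scheme trivializes over the fppf cover $C_v \to B_v$ and is therefore an fppf torsor; since $\Lin_{B_v}(\mathcal X_v)$ is finite \'etale, fppf descent upgrades this to a torsor for the \'etale topology, which is what is needed to land in $\mathrm{H}^1(B_v,\mathcal G_v)$. With this repair --- which uses only ingredients you already have on the table, applied over the cover rather than the base --- your argument becomes the paper's proof.
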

\begin{proof}
 To prove the result, replacing $B$ by a dense open subscheme if necessary,
 we may assume that $X$ has  a good model $\mathcal X\to B$.  
By  Lemma \ref{lem: isomscheme}, since $T \neq (2;n)$, we know that $\Lin_B(\mathcal X)$ is finite over $B$. In particular, 
replacing $B$ again by a dense open subscheme if necessary, we may assume that $\Lin_B(\mathcal X)$ is finite \'{e}tale.

Let $Y$ be a complete intersection of type $T$ which is a twist of $X$ over $K$ and which has good reduction over $B$.
Let $v \in B^{(1)}$, let $\mathcal Y_v$ be a good model for $Y$ over $B_v$
and let $\mathcal X_v = \mathcal X \times_{B} B_v$. 
We claim that $\Isom_{B_v}(\mathcal X_v,\mathcal Y_v)$ is an $\Lin_{B_v}(\mathcal X_v)$-torsor for the \'{e}tale topology.
To prove this, consider the natural left $\Lin_{B_v}(\mathcal X_v)$-action
 $$\Lin_{B_v}(\mathcal X_v) \times_{B_v} \Isom_{B_v}(\mathcal X_v,\mathcal Y_v) \to \Isom_{B_v}(\mathcal X_v,\mathcal Y_v).$$
Let $L/K$ be a finite field extension such that $Y_L$ is isomorphic to $X_L$ over $L$ and let $C_v\to B_v$ be the normalisation of $B_v$ in $L$ (see \cite[Def.~4.1.24]{Liu2}). 
 As  $\Isom_{B_v}(\mathcal X_v,\mathcal Y_v)$ contains an $L$-point,
 it contains a $C_v$-point by Lemma \ref{lem: un}.
 Thus it trivializes over $C_v$, hence is a $B_v$-torsor under $\Lin_{B_v}(\mathcal X_v)$ for the fppf topology.
Since $\Lin_{B_v}(\mathcal X_v)$ is finite \'{e}tale, by fppf descent it is also
a torsor for the \'{e}tale topology, thus proving the claim. 

Hence the class $[Y] \in \mathrm{H}^1(K,\Lin_K(X))$ lies in the image of the natural
map $$  \mathrm{H}^1(B_v, \Lin_{B_v}(\mathcal X_v)) \to \mathrm{H}^1(K, \Lin_K(X)).$$ As $v$
were arbitrary, we conclude that $[Y]$ lies in  
$$ 
\bigcap_{v \in B^{(1)}}\im\left( \mathrm{H}^1(B_v,\Lin_{B_v}(\mathcal X_v)) \to \mathrm{H}^1(K, \Lin_K(X)) \right).$$
The finiteness of this set now follows from Lemma \ref{lem:HM_cohomology}. The result is proved.
\end{proof}

We now give a simple application of Theorem \ref{thm: twists}.
We say that a hypersurface $X$ over $K$ is \textit{geometrically diagonalisable} if $X_{\bar K}$ is $\bar K$-linearly isomorphic to a hypersurface of the form $x_0^d + \cdots + x_{n+1}^d =0$ in $\PP^{n+1}_{\bar K}$.
Examples include hypersurfaces of the shape
$a_0x_0^d + \cdots + a_{n+1}x_{n+1}^d =0$.

\begin{corollary}
	Let $B$ be an arithmetic scheme with function field $K$, let $n\geq 1$ and let $d\geq 3$.
	Then the set of $K$-linear isomorphism classes of geometrically diagonalisable hypersurfaces over $K$
	of dimension $n$ and degree $d$ with good reduction over $B$, is finite.
\end{corollary}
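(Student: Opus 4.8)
The plan is to recognise every geometrically diagonalisable hypersurface as a twist of the Fermat hypersurface and then to invoke Theorem \ref{thm: twists}.

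First I would fix the \emph{Fermat hypersurface}
\[
X_0: \quad x_0^d + \cdots + x_{n+1}^d = 0 \quad \subset \PP^{n+1}_K.
\]
Since $B$ is an arithmetic scheme, it is flat over $\ZZ$, so its function field $K$ has characteristic $0$; in particular $d$ is invertible in $K$ and $X_0$ is a smooth hypersurface of type $T=(d;n)$ over $K$. As $d\geq 3$, we have $T\neq (2;n)$, so Theorem \ref{thm: twists} applies with $X=X_0$.

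Next I would observe that any geometrically diagonalisable hypersurface $X$ over $K$ of dimension $n$ and degree $d$ is, by definition, $\bar K$-linearly isomorphic to $X_0$. Because $X$ and $X_0$ are of finite type over $K$ and $K$ has characteristic $0$ (so that $\bar K = K^{\mathrm{sep}}$), any such $\bar K$-linear isomorphism is defined over a finite separable extension $L/K$. Hence $X$ and $X_0$ become $K$-linearly isomorphic étale-locally on $\Spec K$, which is precisely to say that $X$ is a twist of $X_0$ in the sense of Section \ref{section: twists}. Note that $X_0$ itself need not have good reduction over $B$; this is irrelevant, as Theorem \ref{thm: twists} only requires $X_0$ to be a smooth complete intersection over $K$ of type $T\neq (2;n)$.

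Finally I would apply Theorem \ref{thm: twists} to $X_0$: it asserts that the set of $K$-linear isomorphism classes of complete intersections of type $T$ with good reduction over $B$ which are twists of $X_0$ is finite. The geometrically diagonalisable hypersurfaces over $K$ of dimension $n$ and degree $d$ with good reduction over $B$ constitute a subset of this finite set, and the result follows. There is essentially no obstacle beyond this bookkeeping: all of the content sits in Theorem \ref{thm: twists}, and the only point requiring a moment's care is the elementary descent argument showing that ``$\bar K$-linearly isomorphic'' coincides with ``étale-locally $K$-linearly isomorphic'' in characteristic $0$.
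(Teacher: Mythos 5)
Your proposal is correct and is precisely the argument the paper intends: the paper's proof simply says the corollary "follows immediately from Theorem \ref{thm: twists}", and your write-up fills in exactly the implicit details (every geometrically diagonalisable hypersurface is a twist of the Fermat hypersurface since $\bar K$-linear isomorphism coincides with \'etale-local $K$-linear isomorphism in characteristic $0$, and $d\geq 3$ guarantees $T\neq(2;n)$). No gaps; the observation that $X_0$ itself need not have good reduction over $B$ is also consistent with the statement and proof of Theorem \ref{thm: twists}, which shrinks $B$ to spread out a good model.
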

\begin{proof}
	This follows immediately from Theorem \ref{thm: twists}. 
\end{proof}

\section{The Shafarevich conjecture for complete intersections} \label{Sec:Shaf}
In this section we prove Theorem \ref{theorem: main theorem}, together with various
generalisations to arithmetic schemes, by bringing
together the results of the previous sections. As should be clear
from these results, it will be necessary for us to consider
different cases from the Deligne-Rapoport classification separately (see Section~\ref{sec: Hodge theory}),
depending on the different properties of the stack $\mathcal{C}_T$
and the different cases covered by our arithmetic Torelli theorem.

\subsection{Quadrics} \label{sec:quadrics}
We begin with  quadric hypersurfaces.
The result here is a special case of our more general result
on good reduction of flag varieties \cite[Thm.~1.4]{JL2}. For completeness however,
we give a sketch of a proof in order to illustrate the difficulties
arising in the generalisation to arithmetic schemes.

\begin{proposition}\label{prop:quadrics}
	Let $K$ be a number field, let $B \subset \Spec \OO_K$ be a dense open subscheme,
	and let $n \in \NN $.
	Then the set of $K$-isomorphism classes of $n$-dimensional quadric hypersurfaces
	over $K$ with good reduction over $B$ is finite.
\end{proposition}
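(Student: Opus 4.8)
The plan is to use the fact that a smooth $n$-dimensional quadric hypersurface is just the vanishing locus of a nondegenerate quadratic form in $m := n+2$ variables, taken up to scaling, and that over $\bar K$ any two such are linearly isomorphic. Thus every $n$-dimensional quadric over $K$ is a twist of the split quadric $Q_0$; writing $G = \Lin_K Q_0$ (a smooth affine, indeed reductive, $K$-group, namely the projective similitude group of the split form), the $K$-linear isomorphism classes of $n$-dimensional quadrics are classified by the pointed set $\mathrm H^1(K,G)$, equivalently by similarity classes of nondegenerate quadratic forms of rank $m$ over $K$. The split quadric has a canonical smooth model $\mathcal Q_0$ over $\ZZ$, whose automorphism group scheme $\mathcal G = \Lin_B \mathcal Q_0$ is a smooth affine group scheme over $B$ with generic fibre $G$. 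I emphasise at the outset why the machinery of Theorem \ref{thm: twists} does not apply: for the type $(2;n)$ the group $\Lin_B \mathcal Q_0$ is positive-dimensional, so the finiteness cannot be deduced from Hermite--Minkowski for \emph{finite} group schemes (Lemma \ref{lem:HM_cohomology}); this is precisely the difficulty the sketch is meant to illustrate. Finally, since the set of quadrics to be controlled only grows when $B$ is shrunk, it suffices to prove finiteness after removing from $B$ the finitely many dyadic places, so I may assume every $v \in B^{(1)}$ has odd residue characteristic.

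First I would translate good reduction into a local unramifiedness condition in cohomology. If $Q$ is a twist of $Q_0$ with good reduction at $v \in B^{(1)}$, I choose a smooth model $\mathcal Q_v$ of $Q$ over $\OO_{B,v}$. Over the strict henselisation of $\OO_{B,v}$ every smooth quadric of rank $m$ is split (by Hensel's lemma, the odd residue characteristic reduction being split over the separably closed residue field), so $\mathcal Q_0$ and $\mathcal Q_v$ are forms of one another and the scheme $\Isom_{\OO_{B,v}}(\mathcal Q_0,\mathcal Q_v)$ is a $\mathcal G$-torsor for the \'etale topology whose generic fibre represents $[Q]$. Hence $[Q]$ lies in the image of $\mathrm H^1(\OO_{B,v},\mathcal G) \to \mathrm H^1(K,G)$, and therefore
$$[Q] \in \bigcap_{v \in B^{(1)}} \im\left( \mathrm H^1(\OO_{B,v},\mathcal G) \to \mathrm H^1(K,G) \right).$$
It thus suffices to prove this intersection is finite, i.e.\ that there are only finitely many $G$-torsors over $K$ unramified at every closed point of $B$.

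The finiteness I would establish directly through the classical invariants of quadratic forms. A class in the above intersection corresponds to a nondegenerate quadratic form $q$ of rank $m$ over $K$, up to similarity, that is unramified at every $v \in B^{(1)}$; write $S = \Spec \OO_K \setminus B$, a finite set. Unramifiedness then forces: the discriminant $\disc q$ to lie in the image of $\OO_K[S^{-1}]^*/(\OO_K[S^{-1}]^*)^2$, which is finite since $\OO_K[S^{-1}]^*$ is finitely generated; the Hasse--Witt invariant $c_v(q)$ to be trivial for every $v \in B^{(1)}$, so the remaining possibly nontrivial values occur only among the finitely many places in $S$ and the real places, and these are further constrained by Hilbert reciprocity, leaving finitely many possibilities; and the real signatures, of which there are finitely many, to take one of finitely many values. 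By the Hasse--Minkowski classification of quadratic forms over number fields, these finitely many collections of local invariants are realised by only finitely many equivalence classes of forms, hence by only finitely many similarity classes, i.e.\ finitely many isomorphism classes of quadrics. This proves the proposition.

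The step I expect to be the main obstacle is the finiteness of the unramified cohomology itself. Because $G$ is not finite, one cannot invoke Hermite--Minkowski, and the content is genuinely the finiteness of a Tate--Shafarevich-type set $\Sha$ for the reductive group $G$ over $B$ (equivalently, the finiteness of quadratic forms of fixed rank with prescribed ramification). Over the one-dimensional base $B \subset \Spec \OO_K$ this is supplied by classical quadratic form theory as above, but it is exactly this input --- rather than any formal argument --- that resists easy generalisation, and whose extension to higher-dimensional arithmetic schemes is the real source of difficulty handled by the flag-variety methods of \cite{JL2}.
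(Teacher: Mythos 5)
Your reduction steps are sound, and your overall route genuinely differs from the paper's: the paper glues the local good models into a smooth proper model $\mathcal X \to B$, invokes Demazure \cite{Dem77} to see that $\mathcal X$ is an \'etale twist over $B$ of a fixed quadric $\mathcal X_0$, and then quotes the finiteness of $\mathrm H^1(B,\PO_{n+1})$ due to Gille--Moret-Bailly \cite[Prop.~5.1]{GilleMoretBailly}; you instead keep only the place-by-place condition $[Q] \in \bigcap_{v}\im\bigl(\mathrm H^1(\OO_{B,v},\mathcal G)\to \mathrm H^1(K,G)\bigr)$ and try to prove finiteness of this intersection by classical quadratic form theory. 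The gap is in that last step. The discriminant and the Hasse--Witt invariants are invariants of the \emph{isometry} class of $q$, not of its similarity class, and your unramifiedness condition only says that for each $v \notin S$ \emph{separately} there exist $\lambda_v \in K_v^*$ and a change of basis making $\lambda_v q$ unimodular over $\OO_v$. It does not follow that some single representative $q$ has $\disc q$ an $S$-unit square class and $c_v(q)=0$ for all $v \in B^{(1)}$: the local scalings $\lambda_v$ need not be realized by one global scalar. Concretely, when $m=n+2$ is even the scaling formula $c_v(\lambda q) = c_v(q) + \bigl(\lambda,(-1)^{m(m-1)/2}\disc q\bigr)_v$ shows that $q$ can be $K_v$-similar to a unimodular form at every $v\notin S$ while $c_v(q)\neq 0$ at places outside $S$ where $(-1)^{m(m-1)/2}\disc q$ is a nonsquare; infinitely many invariant vectors $(c_v)_v$ are compatible with your local conditions, and whether they can all be scaled into a fixed finite set is a nontrivial question about the orbits of the scaling action, governed by class field theory for $K\bigl(\sqrt{(-1)^{m(m-1)/2}\disc q}\,\bigr)/K$ (for suitable $K$ and $S$, e.g.\ when the class group has large $2$-rank, there are similarity classes satisfying all your conditions in which \emph{every} representative has a nontrivial Hasse--Witt invariant outside $S$). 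Your sketch treats this as automatic, so the finiteness does not follow as written.

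The repair is to restore the global step you discarded. After shrinking $B$ further so that $\Pic(B)=0$ (harmless, by your own reduction), Lemma \ref{lem:glueing}(2) produces a good model over $B$ itself, i.e.\ a single quadratic form $Q$ with coefficients in $\OO_K[S^{-1}]$ and discriminant a unit in $\OO_K[S^{-1}]$. This one representative is unimodular over $\OO_v$ at \emph{every} $v\notin S$ simultaneously, and for it your invariant computation is correct: $\disc Q$ lies in a finite set of square classes, $c_v(Q)=0$ at all finite $v\notin S$, the invariants at $S$ and at the archimedean places take finitely many values, and Hasse--Minkowski then gives finitely many isometry classes, hence finitely many quadrics. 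With that step inserted, your argument is complete and is genuinely more elementary than the paper's, replacing both Demazure's theorem and the Gille--Moret-Bailly finiteness theorem by the classification of quadratic forms over number fields; and your closing paragraph correctly identifies why neither your argument nor the paper's extends to higher-dimensional arithmetic schemes.
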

\begin{proof}
	Let $\mathcal{X}_0 \subset \PP^{n+1}_B$ be a smooth quadric hypersurface over $B$ 
	(this exists for any $B$).
	Let $\PO_{n+1}$ denote the automorphism group scheme of $\mathcal{X}_0$ over $B$.
	
	Let $X$ be a quadric hypersurface of dimension $n$  over $K$ with good reduction over $B$.
	Note that the $K$-isomorphism class of $X$ corresponds to some element $[X] \in \mathrm{H}^1(K, \PO_{n+1}).$
	Moreover $X$ admits a smooth proper model $\mathcal{X} \to B$ whose fibres are isomorphic to smooth quadric
	hypersurfaces. A general result of Demazure (see the remark
	on page $186$ of \cite{Dem77}) implies that $\mathcal{X}$
	is a twist of $\mathcal{X}_0$, hence $[X]$ lies
	in the image of the map $\mathrm{H}^1(B, \PO_{n+1}) \to \mathrm{H}^1(K, \PO_{n+1})$.
	However the cohomology set $\mathrm{H}^1(B, \PO_{n+1})$ is finite by a general
	result of Gille and Moret-Bailly \cite[Prop.~5.1]{GilleMoretBailly}.
\end{proof} 

\begin{remark}
It does not seem to be possible to prove an analogue of Proposition \ref{prop:quadrics} 
over general arithmetic schemes with current tools. The crucial lacking ingredient
is the finiteness of the image $\im(\mathrm{H}^1(B, \PO_{n+1}) \to \mathrm{H}^1(K, \PO_{n+1}))$ when $\dim B >1$.
This is closely related to the finiteness of Tate-Shafarevich sets of linear algebraic groups,
which is not known over finitely generated field extensions of $\QQ$ 
in general. Note that the analogue of the result \cite[Prop.~5.1]{GilleMoretBailly} used in 
Proposition \ref{prop:quadrics} is even false over higher dimensional arithmetic schemes, e.g.~the set $\mathrm{H}^1(\PP^1_\ZZ, \PO_3) 
= \mathrm{H}^1(\PP^1_\ZZ, \PGL_2)$ is infinite, as there are infinitely many non-equivalent $\PP^1$-bundles over $\PP_\ZZ^1$ (Hirzebruch surfaces).
\end{remark}

\subsection{Intersections of two quadrics}

In this section we prove the Shafarevich conjecture for intersections of two quadrics.  To do so, we will use pencils of quadrics; for geometric background, see \cite[\S 22]{Har92}, \cite{Rei72} and  \cite[\S 3.3]{Wit07}.

Let $n$ be a positive integer and let $A$ be an integral domain in which $2$ is invertible. Let
$$X: \quad Q_1(x) = Q_2(x) = 0 \quad \subset \PP^{n+2}_A$$ 
be a smooth complete intersection of two quadrics $Q_1$ and $Q_2$ over $A$. 
Let $$\Delta(X) : \quad \det(\lambda Q_1 + \mu Q_2) = 0 \quad \subset \PP^1_A$$
denote the \textit{discriminant} of the associated pencil of quadrics.
This is a closed subscheme of $\PP^1_A$ of degree $n+3$ which parametrises the degenerate quadrics in the pencil.
Note that here we are committing some (common) abuses of notation. Firstly, in the definition of $\Delta(X)$, we identify  each $Q_i$ with the corresponding symmetric matrix over $A$. Secondly,
the definition of the discriminant depends on the choice of the $Q_i$, however different choices within the same pencil give rise to
$A$-linearly isomorphic subschemes of $\PP^1_A$, which will be sufficient for our purposes. 
Here, as usual, by a linear isomorphism we mean one which is induced by an automorphism of the ambient projective space.

\begin{lemma} \label{lem:pencil} 
	Let $K$ be a field in which $2$ is invertible.  Let $X$ and $Y$ be smooth complete intersections of two quadrics over $K$ with $K$-linearly isomorphic discriminants. Then $Y$ is a twist of $X$ over $K$.
\end{lemma}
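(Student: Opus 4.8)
The plan is to reduce the statement to a purely geometric fact over a separable closure: that a smooth complete intersection of two quadrics is determined, up to linear isomorphism, by the projective-equivalence class of its discriminant. By the definition of a twist given in Section \ref{sec:twists}, it suffices to produce a $\sep{K}$-linear isomorphism $X_{\sep{K}} \cong Y_{\sep{K}}$, where $\sep{K}$ is a separable closure of $K$. Since the given discriminants $\Delta(X)$ and $\Delta(Y)$ are $K$-linearly isomorphic, they remain $\sep{K}$-linearly isomorphic after base change, so there is an element $g \in \PGL_2(\sep{K})$ carrying $\Delta(X)_{\sep{K}}$ onto $\Delta(Y)_{\sep{K}}$ as subschemes of $\PP^1$.

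First I would record the structure theory of the pencil over $\sep{K}$. Because $X$ is smooth and $2$ is invertible, the discriminant $\Delta(X)$ is a reduced subscheme of $\PP^1$ consisting of $n+3$ points, and by Reid's simultaneous diagonalisation \cite[Prop.~2.1]{Rei72} there is a choice of coordinates on $\PP^{n+2}_{\sep{K}}$ and a basis $(Q_1,Q_2)$ of the pencil in which
\[
X: \quad \sum_{i=0}^{n+2} x_i^2 = 0, \quad \sum_{i=0}^{n+2} a_i x_i^2 = 0,
\]
with distinct $a_i \in \sep{K}$; here the degenerate member of the pencil with singular point $e_i$ records the parameter $a_i$, so that $\Delta(X)_{\sep{K}}$ is exactly the point set $\{[-a_i:1]\} \subset \PP^1$. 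The same applies to $Y$, giving distinct eigenvalues $\{b_j\}$ with $\Delta(Y)_{\sep{K}} = \{[-b_j:1]\}$.

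The key observation is that changing the basis of the pencil $\langle Q_1, Q_2\rangle$ does not change $X$, but acts on the set of pencil parameters (hence on $\Delta(X) \subset \PP^1$) through the corresponding M\"obius transformation, while leaving the diagonalising coordinate simplex $\{e_i\}$ unchanged. Thus the linear-isomorphism class of $X_{\sep{K}}$ depends only on the point set $\{[-a_i:1]\}$ up to the $\PGL_2$-action, which is precisely the linear-isomorphism class of $\Delta(X)_{\sep{K}}$. Applying the transformation $g$ above to the pencil basis for $X$, I may therefore arrange that the eigenvalue set of $X$ equals that of $Y$; after a permutation of coordinates aligning the (distinct) eigenvalues and rescaling, the defining equations of $X$ and $Y$ coincide, yielding the desired $\sep{K}$-linear isomorphism and exhibiting $Y$ as a twist of $X$.

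The main obstacle is the bookkeeping in the third step: making precise the dictionary between a change of basis of the pencil, the induced M\"obius action on the discriminant, and the effect on the diagonal form, and checking that reducedness of $\Delta(X)$ (equivalently smoothness of $X$, via \cite[Prop.~2.1]{Rei72}) forces all eigenvalues to be distinct so that the final matching permutation is well defined. Everything in sight is defined over $\sep{K}$ since, in characteristic $\neq 2$, the discriminant is a separable polynomial and nonzero elements admit square roots in $\sep{K}$, so no inseparable extension is required and the construction genuinely takes place over the separable closure.
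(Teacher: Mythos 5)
Your proposal is correct, and its overall strategy coincides with the paper's: reduce to a separably/algebraically closed base field, where the statement becomes the classical fact that a smooth intersection of two quadrics is determined up to projective equivalence by the $\PGL_2$-orbit of its discriminant. The difference is in how that fact is handled. The paper's proof is two lines: it assumes $K$ algebraically closed and cites \cite[Thm.~22.41]{Har92}. You instead prove the fact directly, via Reid's simultaneous diagonalisation \cite[Prop.~2.1]{Rei72}, tracking how a change of basis of the pencil acts on the discriminant by a M\"obius transformation and then matching eigenvalue sets; your ``bookkeeping'' obstacle is real but routine (one only needs to choose the pencil bases so that the relevant members are nondegenerate, which is possible since the degenerate members are finite in number). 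Your version buys two things: it is self-contained, and it is more careful about positive characteristic. Since a twist is required to trivialise \'etale-locally, one needs an isomorphism over $\sep{K}$, not merely over $\overline{K}$; you observe that separability of the discriminant and the existence of square roots in $\sep{K}$ (characteristic $\neq 2$) make the whole diagonalisation argument work over the separable closure, whereas the paper's reduction to an algebraically closed field silently relies on the fact that the relevant Isom-scheme is \'etale in characteristic $\neq 2$ (harmless for the paper's applications, which are in characteristic zero, but worth noting).
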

\begin{proof} We may assume that $K$ is algebraically closed.
	In which case, the result is well-known; see	\cite[Thm.~22.41]{Har92}.
\end{proof}

\begin{definition}
Let $B$ be an integral scheme with function field $K$. We shall say that a closed subscheme of $\PP^1_B$ that is finite \'{e}tale over $B$ is \emph{split}
if it is $B$-isomorphic to a disjoint union of copies of $B$. A closed subscheme $\Delta$ in $\PP^1_K$ has \emph{good reduction over $B$} if there exists a 
closed subscheme of $\PP^1_B$ that is finite \'{e}tale over $B$ and whose generic fibre is $K$-linearly isomorphic to $\Delta$.
\end{definition}

\begin{lemma}\label{lem:wd}  Let $B$ be an integral affine scheme 
with function field $K$ such that $2$ is invertible in $B$. Let $X$ be a smooth complete intersection of two quadrics over $K$. 
If $X$ has a good model over $B$, then the discriminant of $X$ has good reduction over $B$.
\end{lemma}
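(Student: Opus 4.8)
The plan is to exhibit one explicit model $\Delta(\mathcal X)$ of the discriminant over $B$ and verify that it is finite \'etale, the two engines being that the formation of the discriminant commutes with base change, and that over a field of characteristic $\neq 2$ smoothness of the base locus of a pencil of quadrics is detected by separability of its discriminant.

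First I would unwind the hypothesis. By Definition \ref{def:good_ceduction_CI} and Definition \ref{def:type}, a good model $\mathcal X \subset \PP^{n+2}_B$ is \emph{globally} the zero locus of two quadratic forms of degree $2$ over $B$; since $B=\Spec A$ is affine and $2\in A^*$, I may identify these forms with symmetric matrices $Q_1,Q_2$ with entries in $A$ and set
\[
F(\lambda,\mu)=\det(\lambda Q_1+\mu Q_2)\in A[\lambda,\mu],
\]
a binary form of degree $n+3$, with $\Delta(\mathcal X):=V(F)\subset\PP^1_B$. Because $\mathcal X_K$ is $K$-linearly isomorphic to $X$, the generic fibre of $\Delta(\mathcal X)$ is $K$-linearly isomorphic to $\Delta(X)$ (a different basis of the pencil only alters $\Delta$ by a $\GL_2$-action on $\PP^1$, as recorded in the discussion preceding Lemma \ref{lem:pencil}). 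So it remains to prove that $\Delta(\mathcal X)\to B$ is finite \'etale, after which the statement that $\Delta(X)$ has good reduction over $B$ follows at once from the definition.

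The core is a fibrewise analysis. Since the determinant is polynomial in the matrix entries, the formation of $F$ commutes with base change, so for each $b\in B$ the fibre $\Delta(\mathcal X)_b$ is the discriminant of the complete intersection $\mathcal X_b$, which is a \emph{smooth} complete intersection of two quadrics over $\kappa(b)$ (smooth because $\mathcal X\to B$ is), and $2\in\kappa(b)^*$. The classical theory of pencils of quadrics in characteristic $\neq 2$ (\cite[\S 22]{Har92}, \cite{Rei72}) then forces $F_b$ to be a \emph{separable} binary form of degree $n+3$; in particular $F_b\neq 0$, and $V(F_b)$ is a geometrically reduced, hence \'etale, finite $\kappa(b)$-scheme of length $n+3$. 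I would then assemble finite \'etaleness from three standard facts: (i) $\Delta(\mathcal X)\to B$ is proper (closed in $\PP^1_B$) with finite fibres (as $F_b\neq 0$), hence finite; (ii) a closed subscheme of $\PP^1_B$ defined by a single homogeneous form whose restriction to every fibre is nonzero is a relative effective Cartier divisor, hence flat — and so here finite flat — over $B$; and (iii) a finite flat morphism all of whose fibres are \'etale is itself \'etale. Together these give that $\Delta(\mathcal X)\to B$ is finite \'etale.

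The main obstacle is precisely the fibrewise input: the whole argument rests on the equivalence between smoothness of the base locus of a pencil of quadrics and separability of its discriminant, and this is exactly where the hypothesis that $2$ be invertible is consumed (both the dictionary between quadratic forms and symmetric matrices and the classical criterion break down in characteristic $2$). By contrast, the descent of finite \'etaleness from the fibres is formal; the one point I would state with care is the flatness criterion (ii), noting that it suffices to know $F_b\neq 0$ on every fibre (a nonzero element of the domain $\kappa(b)[\lambda,\mu]$ is automatically a nonzerodivisor), which sidesteps any delicate Noetherian or regularity assumptions on $B$.
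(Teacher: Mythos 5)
Your proposal is correct and takes essentially the same route as the paper: both exhibit the explicit model $V(\det(\lambda Q_1+\mu Q_2))\subset\PP^1_B$ coming from a pair of quadrics defining the good model, and reduce everything to separability of this binary form over every residue field of $B$, which is the classical characteristic $\neq 2$ fact that the paper cites as \cite[Prop.~3.26]{Wit07}. The only difference is one of exposition: the paper leaves the formal assembly (fibrewise separability implies the subscheme of $\PP^1_B$ is finite \'etale) implicit, whereas you spell it out via the relative Cartier divisor flatness criterion.
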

\begin{proof} Let $(Q_1,Q_2)$ be a pair of quadrics over $B$ defining a good model  for $X$ over $B$. To prove the lemma, it suffices to show that the polynomial $$\det(\lambda Q_1 + \mu Q_2)$$ is separable over all residue fields of $B$. This follows from \cite[Prop.~3.26]{Wit07}. 
\end{proof}

For $d \in \NN$ we let $M_{d}(B)$ (resp.~$M_{d}(B)^{\textrm{split}}$)  denote the set of $K$-linear isomorphism classes of closed subschemes (resp.~split closed subschemes) of degree $d$ in $\PP^1_K$ that are finite \'etale over $K$ and have good reduction over $B$.

\begin{lemma} \label{lem:split}
	Let $K$ be a number field and let $B \subset \Spec \OO_K$ be a dense open subscheme.
	Then the set $\sqcup_{d=1}^\infty M_d(B)^{\textrm{split}}$ of $K$-linear isomorphism classes 
	of split closed subschemes of $\PP^1_K$ with good reduction over $B$ is finite.
\end{lemma}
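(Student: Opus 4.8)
The plan is to reduce the problem to the finiteness of solutions of an $S$-unit equation and then invoke Siegel's theorem (Theorem \ref{thm:units}). Since $B \subset \Spec \OO_K$ is a dense open, we may write $B = \Spec \OO_K[S^{-1}]$ for a finite set $S$ of finite places, so that $\OO_B^* = \OO_K[S^{-1}]^*$. A split closed subscheme of $\PP^1_K$ of degree $d$ is simply a collection of $d$ distinct $K$-rational points, and the $K$-linear isomorphism class of such a configuration is governed by the $3$-transitive action of $\PGL_2(K) = \Aut(\PP^1_K)$ on $\PP^1(K)$. For $d \leq 3$ this transitivity shows there is at most one class, so I would dispose of these cases at once and concentrate on $d \geq 4$.

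First I would record that a good model is automatically split over $B$: if $\mathcal D \subset \PP^1_B$ is finite \'etale over $B$ with totally split generic fibre, then each connected component is integral, finite \'etale, and carries a $K$-point over the generic point, hence has degree one over $B$ and is a section. Since good reduction is a property of the $K$-linear isomorphism class, a degree-$d$ split configuration with good reduction is therefore, up to $K$-linear isomorphism, the generic fibre $\{q_1,\dots,q_d\}$ of $d$ pairwise disjoint sections $B \to \PP^1_B$.

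The heart of the argument is to extract arithmetic constraints from the disjointness of these sections. I would use the cross-ratio invariants $c_i := (q_1,q_2;q_3,q_i) \in \PP^1(K) \setminus \{0,1,\infty\}$ for $4 \le i \le d$, which are $\PGL_2(K)$-invariant and hence (together with the finite choice of reference points) determine the $K$-linear isomorphism class of the configuration. For each closed point $v \in B$, disjointness of the sections forces the reductions $\bar q_1,\dots,\bar q_d$ to be pairwise distinct points of $\PP^1(\kappa(v))$; consequently the reduction of $c_i$ equals the cross-ratio of the reduced points and lies in $\PP^1(\kappa(v)) \setminus \{0,1,\infty\}$. As $v$ ranges over all closed points of $B$, this shows that $c_i$ and $c_i - 1$ are both units, i.e.\ $c_i \in \OO_K[S^{-1}]^*$ and $c_i - 1 \in \OO_K[S^{-1}]^*$. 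Writing this as the $S$-unit equation $c_i + (1 - c_i) = 1$, Siegel's theorem (Theorem \ref{thm:units}) yields a finite set $\Sigma \subset K$ containing all possible values of every $c_i$.

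To conclude, I would note that for fixed distinct $q_1,q_2,q_3$ the assignment $q_i \mapsto c_i$ is injective, so the $c_i$ are pairwise distinct; hence $d - 3 \le |\Sigma|$, which bounds the degree $d$, and for each admissible $d$ only finitely many configurations arise since each $c_i \in \Sigma$. Combining this with the at-most-one class in each degree $d \le 3$ gives the finiteness of $\sqcup_{d} M_d(B)^{\textrm{split}}$. I expect the main obstacle to be the geometric translation of the third paragraph --- turning the mere existence of a finite \'etale model into the unit conditions on the cross-ratios --- since this must be argued fibrewise rather than by an integral normalisation (one cannot in general move three sections to $0,1,\infty$ over $B$, as $\PGL_2(\OO_B)$ need not act $3$-transitively when $\Pic B \neq 0$); once this translation is in place, Siegel's theorem and the counting are routine.
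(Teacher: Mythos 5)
Your proof is correct, and like the paper it ultimately rests on Siegel's theorem (Theorem \ref{thm:units}), but the route is genuinely different at the key technical step. The paper's proof is shorter: after noting (as you do, with more justification) that a good model is a disjoint union of pairwise disjoint sections of $\PP^1_B$, it invokes the fact that $\PGL_2(B)$ acts transitively on triples of pairwise disjoint sections of $\PP^1_B$, normalises the model so that it contains the constant sections $0,1,\infty$, and then observes that each remaining section is an element of $(\PP^1_B\setminus\{0,1,\infty\})(B)$, a set which is finite by Theorem \ref{thm:units}; this simultaneously bounds the degree and the number of models up to $B$-linear isomorphism. You deliberately avoid any normalisation over $B$ and instead work with the $\PGL_2(K)$-invariant cross-ratios, extracting the conditions $c_i,\,c_i-1\in\OO_K[S^{-1}]^*$ fibrewise from disjointness of the sections, and then count via the injectivity of $q_i\mapsto c_i$. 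Both arguments are valid; yours trades the integral transitivity statement for a slightly longer invariant-theoretic counting step, and in fact your $c_i$ are exactly the coordinates of the paper's integral points after its normalisation. One correction, however: the parenthetical reason you give for the detour --- that three disjoint sections cannot in general be moved to $0,1,\infty$ when $\Pic B\neq 0$ --- is mistaken for the group the paper actually uses. The scheme-theoretic $\PGL_2(B)=\Aut_B(\PP^1_B)$ acts \emph{simply} transitively on triples of pairwise disjoint sections over an arbitrary base: the morphism $\PGL_2\to(\PP^1)^3\setminus\Delta$ (complement of the pairwise diagonals) sending $g\mapsto(g\cdot 0,g\cdot 1,g\cdot\infty)$ is an isomorphism on every fibre over $\Spec\ZZ$, hence (being a fibrewise isomorphism of flat, finitely presented $\ZZ$-schemes) an isomorphism. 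What can fail when $\Pic B$ has nontrivial $2$-torsion is only that such an automorphism need not be induced by a matrix, i.e.\ $\GL_2(\OO(B))/\OO(B)^*$ may be a proper subgroup of $\PGL_2(B)$, and sections whose associated line bundle class is a nontrivial $2$-torsion element cannot be moved to constant sections by matrices. Since the lemma concerns linear isomorphisms, i.e.\ automorphisms of the ambient $\PP^1$, the paper's normalisation is legitimate as stated, and your argument is an alternative rather than a repair.
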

\begin{proof}  
	Let $\Delta$ be a split finite \'etale closed subscheme of $\PP^1_K$ with good reduction over $B$, and let $\mathcal X$ 
	be a finite \'etale subscheme of $\PP^1_B$ whose generic fibre is $K$-linearly isomorphic to $\Delta$ 
	(note that $\mathcal{X}$ is also split).
	As $\PGL_2(B)$ acts transitively on triples of disjoint $B$-points of $\PP^1_B$, %Ref: Mile, Algebraic Groups, Lemma 21.31
	we may assume that $\mathcal X$ contains $0,1$ and $\infty$. However,
	a simple application of Theorem \ref{thm:units} shows that the set
	$(\PP^1_B \setminus \{0,1,\infty\})(B)$ is finite, thus
	there are only finitely many choices for $\mathcal X$ up to $B$-linear isomorphism.
	Therefore, there are only finitely many choices for $\Delta$ up to $K$-linear isomorphism, as required.
\end{proof}

We are now ready to prove the Shafarevich conjecture (Conjecture \ref{conj}) for intersections of two quadrics.

\begin{proposition}\label{prop:intersection of two quadrics} 
 Let $K$ be a number field, let $B \subset \Spec \OO_K$ be a dense open subscheme, and let $n \geq 1$.
 Then the set of $K$-linear isomorphism classes of $n$-dimensional  complete intersections of two quadrics over $K$ with good reduction over $B$ is finite.
\end{proposition}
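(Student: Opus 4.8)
The plan is to reduce the problem to the finiteness of the set $M_{n+3}(B)$ of discriminants with good reduction, and then to prove that finiteness by bounding the splitting field via Hermite--Minkowski, applying the split case (Lemma \ref{lem:split}), and descending.

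First I would reduce to the case where $2$ is invertible: replacing $B$ by the dense open subscheme $B \cap \Spec \OO_K[1/2]$ is harmless, since good reduction over $B$ implies good reduction over any dense open subscheme, so that finiteness for the smaller (still affine) base implies finiteness for $B$. Now for a smooth complete intersection of two quadrics $X$ over $K$ with good reduction over $B$, the discriminant $\Delta(X) \subset \PP^1_K$ is finite \'etale of degree $n+3$ (the pencil associated to a smooth $X$ has exactly $n+3$ distinct degenerate members), and by Lemma \ref{lem:wd} it has good reduction over $B$; thus $X \mapsto [\Delta(X)]$ defines a map from the set under consideration to $M_{n+3}(B)$. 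If two such $X$ and $Y$ have $K$-linearly isomorphic discriminants, then $Y$ is a twist of $X$ by Lemma \ref{lem:pencil}, and since $T=(2,2;n) \neq (2;n)$, Theorem \ref{thm: twists} bounds the number of such twists with good reduction over $B$. Hence every fibre of this map is finite, and it suffices to show that $M_{n+3}(B)$ is finite.

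To prove that $M_d(B)$ is finite, I would start from a finite \'etale model $\mathcal X \subset \PP^1_B$ of degree $d$ representing a class in $M_d(B)$. The monodromy representation $\pi_1(B) \to S_d$ on the geometric fibre of $\mathcal X \to B$ has finite image, and cuts out a finite extension $L/K$ over which $\Delta$ becomes split; because the associated cover $B_L \to B$ is finite \'etale, the extension $L/K$ is unramified over $B$ and satisfies $[L:K] \mid d!$. Hermite--Minkowski (Theorem \ref{thm:HM}) then leaves only finitely many choices for $L$. Fixing such an $L$ and letting $B_L \to B$ be the corresponding finite \'etale cover (a dense open of $\Spec \OO_L$), the base change $\Delta_L$ is split with good reduction over $B_L$, so Lemma \ref{lem:split} applied over $L$ bounds the $L$-linear isomorphism class of $\Delta_L$.

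The final and most delicate step is the descent: bounding the number of $K$-linear isomorphism classes $\Delta$ producing a fixed $L$ and a fixed class of $\Delta_L$. Since $d = n+3 \geq 3$, an automorphism of $\PP^1$ preserving $\Delta$ permutes its $d$ points, and the kernel of the resulting map to $S_d$ fixes at least three points, hence is trivial; thus the stabiliser $\underline{\Aut}(\Delta) = \mathrm{Stab}_{\PGL_{2,K}}(\Delta)$ is a finite $K$-group scheme embedding into $S_d$. The classes in question are the $K$-forms of $\Delta$ trivialised by $L$, classified by $\ker\big(\mathrm{H}^1(K, \underline{\Aut}(\Delta)) \to \mathrm{H}^1(L, \underline{\Aut}(\Delta))\big)$, which is finite because $\underline{\Aut}(\Delta)$ is finite and $[L:K]$ is bounded --- for instance by inflation--restriction, or via Lemma \ref{lem:HM_cohomology} applied to a finite \'etale group scheme extending $\underline{\Aut}(\Delta)$ over $B$. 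The hard part is exactly this passage from the split case to the general case: one must bound the splitting field through good reduction and Hermite--Minkowski and then control the ensuing Galois descent, for which the finiteness of the stabiliser group scheme (valid precisely because $d \geq 3$) is the crucial input.
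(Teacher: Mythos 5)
Your proposal is correct in substance and shares the paper's skeleton --- reduce to the case where $2$ is invertible, pass to discriminants via Lemma \ref{lem:wd}, get finite fibres from Lemma \ref{lem:pencil} combined with Theorem \ref{thm: twists}, and treat split discriminants by the $S$-unit theorem (Lemma \ref{lem:split}) --- but it diverges in how non-split discriminants are handled. The paper never proves that $M_{n+3}(B)$ itself is finite: it uses Hermite--Minkowski to produce a \emph{single} finite \'etale cover $B' \to B$ splitting every element of $M_{n+3}(B)$ at once, and then studies the composed map from the set of complete intersections in question to $M_{n+3}(B')^{\mathrm{split}}$; the target is finite by Lemma \ref{lem:split}, and the fibres are finite because discriminants that merely become isomorphic over $B'$ still force the complete intersections to be twists of one another, so all Galois descent is absorbed into Theorem \ref{thm: twists} (that is, into Lemma \ref{lem:HM_cohomology}). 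You instead prove the finiteness of $M_{n+3}(B)$ directly: bound the splitting field by monodromy and Hermite--Minkowski, apply the split case over $L$, and then descend using the finiteness of $\mathrm{Stab}_{\PGL_2}(\Delta)$ (valid since $n+3 \geq 4$, as an element of $\PGL_2$ fixing three points is trivial) together with the finiteness of $\ker\bigl(\mathrm{H}^1(K,\cdot) \to \mathrm{H}^1(L,\cdot)\bigr)$ for finite coefficient groups and bounded $[L:K]$. This is a genuinely workable alternative: it is slightly longer but yields a stronger intermediate statement (finiteness of $M_{n+3}(B)$), whereas the paper's composition trick is more economical precisely because the twisting theorem already contains the needed descent.

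One gap to patch: Lemma \ref{lem:wd} requires a good \emph{model} over $B$ (a single smooth complete intersection of type $(2,2;n)$ over all of the affine scheme $B$), whereas your $X$ is only assumed to have good \emph{reduction} over $B$, i.e.\ good models over each local ring $B_v$ for $v \in B^{(1)}$. These local models need not glue automatically; in particular, the local finite \'etale models of $\Delta(X)$ are only identified with $\Delta(X)$ up to possibly different elements of $\PGL_2(K)$ at different $v$, so their closures need not assemble into one finite \'etale closed subscheme of $\PP^1_B$, which is what membership in $M_{n+3}(B)$ demands. The paper repairs this exactly the way you repaired the invertibility of $2$: shrink $B$ further so that in addition $\Pic(B)=0$ (possible since the class group is finite), and then part (2) of Lemma \ref{lem:glueing} upgrades good reduction over $B$ to a good model over $B$, after which Lemma \ref{lem:wd} applies. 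With that one additional reduction your argument is complete.
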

\begin{proof} 
	To prove the proposition, we may assume that $2$ is invertible on $B$ and that $\Pic(B) =0$. By Lemma \ref{lem:glueing}, it suffices to show that the set $Q_{n}(B)$ of $K$-linear isomorphism classes of $n$-dimensional complete intersections of two quadrics over $K$ with a good model over $B$ is finite. To do so,
	note that by Lemma~\ref{lem:wd} the assignment of the discriminant of an intersection of two quadrics
	gives rise to a well-defined map of sets $Q_n(B)\to M_{n+3}(B)$. 
	By Hermite-Minkowski (Theorem \ref{thm:HM}), there exist an integral scheme $B'$ and a finite \'etale morphism $B^\prime\to B$ such that each element of $M_{n+3}(B)$ splits over $B^\prime$. Consider the composed map of sets \[Q_n(B) \longrightarrow M_{n+3}(B) \longrightarrow M_{n+3}(B^\prime)^{\textrm{split}} \]  By Lemma \ref{lem:split}, the set $M_{n+3}(B^\prime)^{\textrm{split}}$ is finite.   On combining Theorem \ref{thm: twists} with Lemma \ref{lem:pencil} ,
	we see that the composed  map has finite fibres. The result is proved. 
\end{proof}
 
\subsection{Complete intersections of Hodge level $1$}
We now handle the case of complete intersections of Hodge level $1$, for which we use the
intermediate Jacobian (see \S \ref{sec:int_jac} for the relevant properties). Here we use the notion of smooth reduction (Definition \ref{def:smooth_reduction}).

\begin{lemma} \label{lem: int Jac has GR}
	Let $B$ be an integral normal Noetherian scheme with function field $K$
	and let $X$ be a complete intersection of Hodge level $1$ over $K$
	with smooth reduction over $B$. Then the intermediate Jacobian $J(X)$
	of $X$ has smooth reduction over $B$.
\end{lemma}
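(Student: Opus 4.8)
The plan is to reduce the assertion to the Néron--Ogg--Shafarevich criterion for good reduction of abelian varieties, which lets us avoid reducing the intermediate Jacobian itself modulo $v$ (a construction not available to us in positive characteristic, since the morphism of stacks \eqref{eqn:int_jac_stacks} lives only over $\QQ$). First I would reduce to a single point: by Definition \ref{def:smooth_reduction} it suffices to prove that $J(X)$ has smooth reduction at each $v \in B^{(1)}$. Fix such a $v$; then $R := \mathcal{O}_{B,v}$ is a discrete valuation ring with fraction field $K$ and residue field $k(v)$, and by hypothesis $X$ admits a smooth proper model $\mathcal{X} \to \Spec R$. Since $J(X)$ is an abelian variety, smooth reduction at $v$ is equivalent to good reduction at $v$ (the N\'eron model over $R$ being an abelian scheme, hence a smooth proper model), and I would establish the latter.

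Next I would translate good reduction into an unramifiedness statement on $\ell$-adic cohomology. Choose a prime $\ell$ invertible in $k(v)$ and set $m = (n-1)/2$. By the N\'eron--Ogg--Shafarevich criterion, $J(X)$ has good reduction at $v$ if and only if the inertia group $I_v \subset \Gal(\bar K/K)$ acts trivially on $\mathrm{H}^1(J(X)_{\bar K}, \ZZ_\ell)$. By Lemma \ref{lem: int Jac}.(3) there is a $\Gal(\bar K/K)$-equivariant isomorphism $\mathrm{H}^1(J(X)_{\bar K}, \ZZ_\ell) \cong \mathrm{H}^n(X_{\bar K}, \ZZ_\ell(m))$. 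Because $\ell$ is invertible in $k(v)$, the cyclotomic character is unramified at $v$, so $I_v$ acts trivially on the Tate twist $\ZZ_\ell(m)$; it therefore suffices to show that $I_v$ acts trivially on $\mathrm{H}^n(X_{\bar K}, \ZZ_\ell)$.

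Finally I would deduce this unramifiedness from the smooth proper model $\mathcal{X}/R$ by means of the smooth and proper base change theorems in \'etale cohomology: for $\ell$ invertible in $k(v)$, these identify $\mathrm{H}^n(X_{\bar K}, \ZZ_\ell)$ with the cohomology of the geometric special fibre of $\mathcal{X}$ and show that the $\Gal(\bar K/K)$-action factors through the quotient by $I_v$, i.e. the representation is unramified. Combining the three steps yields good reduction of $J(X)$ at $v$, and since $v$ was an arbitrary codimension-one point, $J(X)$ has smooth reduction over $B$.

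The main conceptual point, and the only place genuine care is needed, is precisely the decision \emph{not} to reduce $J(X)$ geometrically but to characterise good reduction entirely through the generic-fibre Galois representation. All the input coming from the geometry of the model $\mathcal{X}$ then enters solely via smooth and proper base change for $\ell$-adic cohomology, which requires only that $\ell$ be coprime to the residue characteristic and is thus insensitive to whether $k(v)$ has positive characteristic; the comparison of Lemma \ref{lem: int Jac}.(3) together with N\'eron--Ogg--Shafarevich does the rest. No step involves a substantial calculation, so there is no real obstacle beyond correctly invoking these standard theorems.
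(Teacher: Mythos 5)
Your proof is correct and follows essentially the same route as the paper: reduce to the discrete valuation ring at a codimension-one point, use smooth proper base change to see that inertia acts trivially on $\mathrm{H}^n(X_{\bar K},\ZZ_\ell(m))$, transfer this to $\mathrm{H}^1(J(X)_{\bar K},\ZZ_\ell)$ via Lemma \ref{lem: int Jac}.(3), and conclude by the N\'eron--Ogg--Shafarevich criterion \cite[Thm.~7.4.5]{BLR}. The paper states this more tersely (citing Deligne's argument), whereas you spell out the auxiliary points it leaves implicit, such as the unramifiedness of the cyclotomic character handling the Tate twist.
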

\begin{proof} 
	To prove the result, we may assume that $B$ is a local Dedekind scheme.
	The proof in this case is very similar to the proof
	of \cite[Lem.~3.2]{Del72}, so we shall be brief.
	Let $X$ be an $n$-dimensional complete intersection
	of Hodge level $1$ with smooth reduction over $B$ and let $m=(n-1)/2$.
	As $X$ has smooth reduction over $B$, the inertia group at the closed point $v \in B$
	acts trivially on the $\Gal(\bar{K}/K)$-module 
	$\mathrm{H}^1(X_{\bar{K}},\ZZ_\ell(m))$ for all primes $\ell$ different from the residue
	characteristic of $v$.
	The N\'eron-Ogg-Shafarevich criterion \cite[Thm.~7.4.5]{BLR}
	and Lemma \ref{lem: int Jac} now give the result.
\end{proof}

We now combine this with Faltings's theorem and our arithmetic Torelli theorem to deduce the Shafarevich conjecture
in the remaining cases.

\begin{proposition}\label{prop: HL1}
	Let $B$ be an arithmetic scheme with function field $K$ and $T$ a type of Hodge level $1$.
	Assume that $T \neq (3;1)$ and $T \neq (2,2;n)$. Then the set of $K$-linear isomorphism classes of
	complete intersections of type $T$ over $K$ with smooth reduction over $B$  is finite. 
\end{proposition}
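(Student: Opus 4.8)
The plan is to reduce the statement about $K$-linear isomorphism classes to a finiteness statement over $\bar K$-classes by controlling twists, and then to deduce the $\bar K$-finiteness from Faltings's theorem via the intermediate Jacobian. This mirrors Faltings's original strategy for curves, replacing the Jacobian-Torelli package by the arithmetic Torelli theorem proved earlier.

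First I would observe that by the classification in Section \ref{sec:classification} there are only finitely many types of Hodge level $1$ with a given Betti number, so it suffices to fix a type $T$ (which moreover satisfies $T \neq (2;n)$ since Hodge level $1$ excludes quadrics). The goal is then to show that the set $\Sigma$ of $K$-linear isomorphism classes of complete intersections of type $T$ over $K$ with smooth reduction over $B$ is finite. The central device is the intermediate Jacobian: by Lemma \ref{lem: int Jac has GR}, any $X \in \Sigma$ has $J(X)$ a principally polarised abelian variety of dimension $g(T)$ with smooth reduction over $B$. By Faltings's theorem (Theorem \ref{thm: Faltings}), there are only finitely many $K$-isomorphism classes of such $J(X)$. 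Thus it suffices to bound, for a fixed principally polarised abelian variety $A$ over $K$, the number of classes in $\Sigma$ whose intermediate Jacobian is $K$-isomorphic to $A$.

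Next I would fix such an $A$ and consider the complete intersections $X$ with $J(X) \cong A$. Over the algebraic closure $\bar K$, Corollary \ref{cor:Jacobian torelli} shows there are only finitely many $\bar K$-isomorphism classes of complete intersections of type $T$ with intermediate Jacobian isomorphic to $A_{\bar K}$; equivalently, the members of $\Sigma$ with $J(X)\cong A$ fall into finitely many $\bar K$-linear isomorphism classes. So each such $X$ is, after passing to $\bar K$, linearly isomorphic to one of finitely many fixed complete intersections $X_1,\ldots,X_r$ over $K$ (which we may assume exist, enlarging $K$ harmlessly or spreading out). In other words, $X$ is a twist of some $X_j$. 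Since smooth reduction is weaker than good reduction, I would first need to upgrade: for the specific types under consideration, smooth reduction should imply good reduction (via Lemma \ref{lem:glueing}-type arguments together with the separatedness and Deligne–Mumford properties of $\mathcal{C}_T$ from Proposition \ref{prop:moduli_stack}, using $T \neq (2;n)$), so that each $X$ has good reduction over $B$ as a complete intersection.

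Finally, having realised each $X$ as a twist of one of finitely many $X_j$ with good reduction over $B$, I would apply Theorem \ref{thm: twists}: since $T \neq (2;n)$, each $X_j$ admits only finitely many $K$-linear isomorphism classes of twists with good reduction over $B$. Combining the finitely many choices of $A$ (Faltings), the finitely many $\bar K$-classes per $A$ (Corollary \ref{cor:Jacobian torelli}), and the finitely many twists per $\bar K$-class (Theorem \ref{thm: twists}) yields the finiteness of $\Sigma$. The main obstacle I anticipate is the passage from smooth reduction to good reduction for the relevant types—one must ensure that a smooth model genuinely arises as a complete intersection over $B$ rather than merely a smooth proper model, which is exactly where the hypotheses $T \neq (3;1)$ and $T \neq (2,2;n)$ and the good behaviour of $\mathcal{C}_T$ must be leveraged; everything downstream is then a bookkeeping combination of the cited finiteness results.
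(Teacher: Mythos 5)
Your first two steps (Lemma \ref{lem: int Jac has GR} plus Faltings, Theorem \ref{thm: Faltings}) coincide with the paper's, but your endgame has a genuine gap: the claimed upgrade from smooth reduction to good reduction does not exist, and in fact fails. The paper explicitly warns that good reduction is strictly stronger than smooth reduction: Lemma \ref{lem: infinitely many cubics} produces infinitely many pairwise non-$\overline{K}$-isomorphic split cubic surfaces with smooth reduction everywhere, precisely because the smooth proper models constructed there have fibres that are weak del Pezzo surfaces rather than cubic surfaces, hence are not good models. The same phenomenon occurs for the Hodge level $1$ types covered by the proposition: for instance for $T=(4;1)$, a smooth proper model of a plane quartic over a discrete valuation ring can have a hyperelliptic special fibre, which is not a complete intersection of type $(4;1)$. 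Lemma \ref{lem:glueing} cannot repair this, since it takes good models at each codimension-one point as input and glues them; it does not convert a smooth proper model into one carrying a complete intersection structure. And Theorem \ref{thm: twists} genuinely needs good models: its proof forms the torsor $\Isom_{B_v}(\mathcal X_v,\mathcal Y_v)$ of \emph{linear} isomorphisms, which requires complete intersection structures on both models. So your argument, as written, proves only the weaker variant of Proposition \ref{prop: HL1} in which ``smooth reduction'' is replaced by ``good reduction'' --- exactly the alternative route the paper records in the remark immediately following the proposition (which it notes has the separate advantage of also covering $(2,2;n)$ with $n$ odd).

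The paper closes the argument differently, in a way that avoids twisting and the $\overline{K}$-Torelli statement altogether: after Faltings, it invokes the arithmetic Torelli theorem, Theorem \ref{thm:arithmetic_torelli}, which is a statement over $K$ itself --- for a fixed principally polarised abelian variety $A$ over $K$, only finitely many $K$-linear isomorphism classes of smooth complete intersections of type $T$ over $K$ have intermediate Jacobian $K$-isomorphic to $A$. That theorem rests on Proposition \ref{prop:12} (separatedness, representability by schemes and quasi-finiteness of $J\colon \mathcal{C}_{T,\QQ}\to\mathcal{A}_{g(T),1,\QQ}$, which in turn uses the faithfulness of $\Lin X$ on cohomology, Proposition \ref{prop:faithful}), and it requires no reduction hypothesis whatsoever; the smooth-reduction assumption in Proposition \ref{prop: HL1} is used only to feed Faltings's theorem. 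If you want to salvage your route, you must either prove the proposition with the good-reduction hypothesis (weaker conclusion), or replace your twisting step by an appeal to Theorem \ref{thm:arithmetic_torelli}.
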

\begin{proof}  Let $X$ be a  complete intersection of  type $T$ over $K$ with smooth reduction over $B$. 
Note that the dimension of the intermediate Jacobian $J(X)$ is 
 determined by $T$.  By Lemma \ref{lem: int Jac has GR},
the abelian variety $J(X)$ has smooth reduction over $B$.
Faltings's finiteness theorem (Theorem \ref{thm: Faltings})
therefore implies that the set of $K$-isomorphism classes of all principally polarised abelian varieties $J(X)$, 
where $X$ runs over all smooth complete intersections of type $T$ with
smooth reduction over $B$, is finite. 
However by Theorem \ref{thm:arithmetic_torelli} (proven in Section \ref{section:arithmetic_torelli_proof}), only finitely many such complete intersections
have isomorphic intermediate Jacobians over $K$, whence the result.
\end{proof}

\begin{remark}
	It is possible to prove a slightly weaker variant of
	Proposition~\ref{prop: HL1}, using Corollary \ref{cor:Jacobian torelli}, Theorem \ref{thm: twists}
	and Lemma \ref{lem: int Jac has GR},
	which avoids the need to appeal to Theorem \ref{thm:arithmetic_torelli}.
	This gives the same finiteness statement over arithmetic schemes 
	but with \emph{smooth reduction} replaced by \emph{good reduction}; it has the advantage 
	however of also working for odd-dimensional intersections of two
	quadrics.
\end{remark}

\subsection{Proof of Theorem \ref{theorem: main theorem}}\label{section: conclusion}
In order to prove Theorem \ref{theorem: main theorem}, we follow the classification (see Section \ref{sec:classification}). For quadrics the result is Proposition \ref{prop:quadrics}.
For cubic surfaces the result follows from Scholl \cite{Scholl}. 
For intersections of two quadrics this is Proposition \ref{prop:intersection of two quadrics}. 
The remaining types with Hodge level $1$, aside from cubic curves, follow from Proposition \ref{prop: HL1},
as by the classification there are only finitely many types with the same dimension and same Betti numbers.

It therefore remains to handle the case of plane cubics. If $X$ is a smooth plane cubic
with good reduction outside of $S$, then the Jacobian $J(X)$ has smooth reduction
outside of $S$ by Lemma \ref{lem: int Jac has GR}. Hence as $X$ runs over all smooth plane cubics,
Faltings's theorem implies that there are only finitely many $K$-isomorphism classes amongst the $J(X)$.
Moreover if $Y$ is another smooth plane cubic with $J(Y) \cong J(X)$, then one easily sees that $Y$
is a twist of $X$ as cubic curves. The result then follows from Theorem \ref{thm: twists}. \qed

\begin{remark}
	Note that the Shafarevich conjecture actually \emph{fails} for the collection of all curves of genus $1$
	(see \cite[p.~241]{Maz86}).
	Nevertheless, Theorem \ref{theorem: main theorem} shows that it holds for the class of smooth plane cubic curves, for example.
\end{remark}

\section{The Lang--Vojta conjecture implies the Shafarevich conjecture}\label{section:Lang_Shaf}

 The aim of this section is to prove Theorem \ref{thm: lang implies shaf}. Our general result
for arithmetic schemes is Theorem \ref{thm: lis}, which gives Theorem \ref{thm: lang implies shaf} as a special case.

\subsection{The Lang--Vojta conjecture}\label{section:lang}
We first recall the Lang--Vojta conjecture on integral points. The original versions of this conjecture appeared in   \cite{Lang2}  and \cite[Conj.~XV.4.3]{CornellSilverman} (see also \cite[\S0.3]{Abr} for a version over arithmetic schemes). Its first striking consequence was obtained by Caporaso-Harris-Mazur \cite{CHM}.

A quasi-projective     scheme  $U$ over a field $K$ of characteristic zero is of \textit{log-general type} 
if for any irreducible component $U'$ of $(U_{\overline{K}})_{\textrm{red}}$, there is a resolution of singularities $V\to U'$ together with a smooth proper variety $X$ and an open immersion of $V$ into $X$
such that $D=X\backslash V$ is a simple normal crossings divisor and $K_X +D $ a big divisor on $X$.  
 
\begin{conjecture} [Lang--Vojta conjecture] \label{lang conj} Let $B$ be  an arithmetic  scheme with function field $K$ and 
let $U$ be a smooth quasi-projective scheme over $B$. If every    subvariety of $U_{K}$ is of log-general type, then   the set $U(B)$  is finite.
\end{conjecture}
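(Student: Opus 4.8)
I must begin by stating plainly that Conjecture \ref{lang conj} is, as its name announces, a central \emph{open} problem of Diophantine geometry: no proof is known, and none can be extracted from the methods of this paper, whose purpose is rather to use the conjecture as an input (see Theorem \ref{thm: lang implies shaf} and Conjecture \ref{conj}). What follows is therefore not a proof but an honest plan of attack, together with the precise point at which every known approach currently stalls. The overarching strategy would be to reduce the finiteness of $U(B)$ to a Diophantine inequality bounding the heights of integral points in terms of the geometry of the boundary divisor $D = X \setminus V$ appearing in the definition of log-general type, and then to supply that inequality by an approximation theorem. Concretely, one would first spread $U$ out over $B$ as the complement of a normal crossings divisor in a smooth proper model, so that a $B$-point of $U$ becomes a $D$-integral point of the proper model, and one would reformulate ``log-general type'' as the bigness of $K_X + D$; this packaging is routine and is essentially the content of the formulation over arithmetic schemes in \cite[\S0.3]{Abr}.

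The heart of the plan is to establish the required height inequality, and here one would pursue the two avenues along which partial progress is known. The first is Vojta's analytic-number-theoretic programme: one attempts to prove a height inequality of the shape $h_{K_X + D}(P) \leq \epsilon\, h(P) + O(1)$ for integral points $P$, via Arakelov-theoretic and Thue--Siegel--Roth--type arguments, from which finiteness follows because bigness of $K_X + D$ forces the left-hand side to grow. The second, and the only source of unconditional theorems, is to exploit extra structure: when $U$ maps nontrivially to a semiabelian variety one invokes the theorems of Faltings \cite{Faltings2} and their extension to integral points on semiabelian varieties, and more generally one would try, in the spirit of Campana's programme and of the applications in \cite{CHM}, to fibre $U_K$ over a positive-dimensional base of log-general type and induct, pushing all integral points into a ``core'' where the abelian-variety methods or the Subspace Theorem apply. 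A third, complementary, avenue is available precisely when the boundary $D$ has many components: the Schmidt--Schlickewei Subspace Theorem (as deployed by Corvaja--Zannier, Levin, Autissier and others) then yields degeneracy of integral points on surfaces and special higher-dimensional targets, and one would hope to bootstrap from these cases.

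The decisive obstacle, which blocks all three avenues in the generality stated, is the absence of any Diophantine approximation input of the strength that $K_X + D$ big --- as opposed to $D$ itself being suitably positive or $U$ mapping to a (semi)abelian variety --- would require. Roth's theorem and even the full Subspace Theorem control approximation to a fixed finite set of divisors in a projective space or torus, but they give no grip on a general smooth projective $X$ whose only hypothesis is bigness of the adjoint bundle $K_X + D$; equivalently, no unconditional form of Vojta's height inequality is known beyond the semiabelian case. Thus the realistic outcome of carrying out this plan is not a proof of Conjecture \ref{lang conj} but a proof of its known fragments (the (semi)abelian and rich-boundary cases) and a clean reduction of the general statement to Vojta's inequality; it is exactly because the full conjecture remains inaccessible that the logically weaker implication of Theorem \ref{thm: lang implies shaf}, which merely \emph{assumes} Conjecture \ref{lang conj}, is the strongest unconditional arithmetic consequence we can record.
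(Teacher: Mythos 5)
You are correct: the statement is the Lang--Vojta conjecture itself, which the paper states without proof and uses only as a hypothesis (in Theorem \ref{thm: lang implies shaf} and Theorem \ref{thm: lis}), so declining to prove it and instead surveying the known partial results and obstructions is exactly the right stance. Your assessment that no unconditional proof is available beyond the semiabelian and Subspace-Theorem cases matches the paper's treatment, which assumes the conjecture rather than establishing it.
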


Note that the Lang--Vojta conjecture usually states that if $U_K$ has log-general type, then $U(B)$ is not Zariski dense.
By considering the Zariski closure of $U(B)$ in $U$, one easily sees that this implies Conjecture \ref{lang conj}
in the case where  every subvariety of $U_K$ is also of log-general type.

\subsection{A finite \'etale atlas}
A  morphism $U\to \mathcal C$ of   Deligne-Mumford stacks is an \emph{(\'etale) atlas of $\mathcal C$} if $U$ is an algebraic space and $U\to \mathcal C$ is \'etale and surjective.  Note that, as the diagonal of a Deligne-Mumford stack is representable by algebraic spaces, the morphism $U\to \mathcal C$ is representable by algebraic spaces. We emphasise that an atlas $U$ of $\mathcal C$ is not necessarily a scheme.

For example, if $g\geq 1$ and $k\geq 3$, then the moduli stack $\mathcal A_{g,1}^{[k]}$ of principally polarised $g$-dimensional abelian varieties with level $k$ structure  is a \emph{finite} \'etale atlas of  the moduli stack  $\mathcal A_{g,1}$ over $\ZZ[1/k]$. In  \cite[Ch.~VII, Thm.~3.2]{MoretBailly} Moret-Bailly proved that $\mathcal A_{g,1}^{[k]}$ is a quasi-projective scheme (and not merely an algebraic space).  Similarly, the moduli stack of polarised K3 surfaces admits a finite \'etale atlas over some arithmetic curve (see \cite[Thm.~6.1.2]{Rizov}).   

The aim of this section is to prove similar results for complete intersections (see Theorem \ref{thm: finet}).
Our proof uses Proposition \ref{prop:faithful}, which allows us to add level-structure to $\mathcal C_{T,\CC}$ in a naive way.
This approach is based on Popp's paper \cite{PoppI}.

We start with a criterion for an algebraic space over a field to be  quasi-projective. This result is well-known; for lack of reference we include a proof.
\begin{lemma}\label{lem:quotients}
Let $k\subset L$ be a field extension and let $X$ be an algebraic space over $k$. If $X_L$ is a quasi-projective scheme over $L$, then $X$ is a quasi-projective scheme over $k$.
\end{lemma}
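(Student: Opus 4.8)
The plan is to reduce to the case of a finite Galois extension and then to descend an ample invertible sheaf by a norm (Galois-averaging) construction. First I would record what descends for free: since $X_L$ is quasi-projective it is separated and of finite type over $L$, and as $\Spec L \to \Spec k$ is faithfully flat and quasi-compact, fpqc descent of properties of algebraic spaces over the base shows that $X$ is separated and of finite type (in particular quasi-compact and quasi-separated) over $k$ \cite{stacks-project}.

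Next I would reduce to a finite extension. Base-changing $X_L$ to an algebraic closure $\Omega$ of $L$ keeps it quasi-projective, and $X_\Omega = (X_{\kbar})_\Omega$ for the algebraic closure $\kbar$ of $k$ inside $\Omega$; a standard spreading-out argument for algebraic spaces of finite type (pass to a finitely generated subextension, realise it as the function field of an affine $\kbar$-variety, spread the projective immersion over a dense open, and specialise to a closed point, whose residue field equals $\kbar$) then shows that $X_{\kbar}$ is already quasi-projective over $\kbar$. Applying the same spreading-out to the algebraic extension $\kbar/k$ produces a finite subextension $k'/k$ with $X_{k'}$ quasi-projective. Replacing $k'$ by the Galois closure of its separable part, I may assume $k'=L$ is finite Galois over $k$ with group $G$; in characteristic zero every finite extension is separable, so this step is automatic, while the purely inseparable case (in positive characteristic) is handled by a separate, standard descent along the universal homeomorphism $\Spec L\to\Spec k$.

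The heart of the argument is then the descent of an ample sheaf. Choose an ample invertible sheaf $\mathcal M$ on the quasi-projective scheme $X_L$ and set $\mathcal N = \bigotimes_{\sigma \in G} \sigma^{*}\mathcal M$, where $\sigma$ runs over the $k$-automorphisms of $X_L$ induced by $G$. Each $\sigma^{*}\mathcal M$ is ample, and a tensor product of ample invertible sheaves is ample, so $\mathcal N$ is ample; moreover the permutation of the tensor factors equips $\mathcal N$ with a canonical $G$-linearisation, that is, a descent datum for the finite faithfully flat cover $X_L \to X$, so $\mathcal N$ descends to an invertible sheaf $\mathcal N_0$ on $X$ (equivalently, $\mathcal N_0$ is the norm of $\mathcal M$ along $X_L\to X$). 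Since ampleness is insensitive to faithfully flat quasi-compact base change, $\mathcal N_0$ is ample on $X$. Finally, a quasi-compact quasi-separated algebraic space carrying an ample invertible sheaf is a scheme, and is quasi-projective over the base \cite{stacks-project}; hence $X$ is quasi-projective over $k$, as required.

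I expect the main obstacle to be exactly this descent of an ample bundle: an individual ample sheaf on $X_L$ need not lie in the image of $\Pic(X) \to \Pic(X_L)$, since there is a genuine Picard/Brauer obstruction, and it is the norm construction---available precisely because $L/k$ is finite---that produces a Galois-invariant ample sheaf equipped with an honest descent datum. By contrast, the reduction steps and the final implication \emph{ample sheaf $\Rightarrow$ quasi-projective scheme} for algebraic spaces are standard; the only routine care needed is in tracking separatedness and finite presentation through the spreading-out.
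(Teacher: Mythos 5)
Your reduction to a finite extension is sound, and in substance it is the paper's own argument in different packaging: the spreading-out-and-specialisation you sketch over $\overline{k}$ is exactly how the paper handles the purely transcendental case (there organised as a tower of extensions $k\subset k(t)$ plus induction, rather than in one shot over an affine $\overline{k}$-variety), and the limit argument descending from $\overline{k}$ to a finite subextension is standard. The genuine divergence is in the finite case: the paper simply quotes \cite[Rem.~V.5.1]{SGA3} and \cite[Thm.~V.4.1]{SGA3} --- $X$ is the quotient of the quasi-projective scheme $X_L$ by a finite locally free equivalence relation, and such quotients are schemes --- whereas you try to re-run the engine inside that theorem, namely the descent of an ample sheaf by norms/Galois averaging.

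As written, however, your crux step is circular. The sentence ``since ampleness is insensitive to faithfully flat quasi-compact base change, $\mathcal N_0$ is ample on $X$'' invokes a descent theorem (EGA~IV, 2.7.2) whose statement and proof concern \emph{schemes} on both sides; at this point of your argument $X$ is still only an algebraic space, and ``ample invertible sheaf on an algebraic space'' has no independent content: the only reasonable definition is that the loci $X_s$, for $s$ a section of a power of the sheaf, which are affine \emph{schemes} cover $X$ --- and producing such sections and affine opens is precisely what remains to be proven. Correspondingly, your closing citation ``a qcqs algebraic space carrying an ample invertible sheaf is a scheme'' is, with that definition, a tautology, so the entire difficulty has been displaced into the unjustified descent claim. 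The fix is the statement you actually need: for a finite locally free surjection $\pi\colon Y\to X$ of algebraic spaces with $Y$ a scheme carrying an ample invertible sheaf $\mathcal M$, the space $X$ is a scheme and $\operatorname{N}_{\pi}(\mathcal M)$ is ample. Its proof is the orbit argument (every fibre of $\pi$ lies in an affine $Y_t$ with $t$ a section of a power of $\mathcal M$; the norm of $t$ is then a section of $\mathcal N_0$ whose non-vanishing locus has affine preimage, hence is affine by Chevalley's theorem for algebraic spaces), and this is exactly the content of \cite[Thm.~V.4.1]{SGA3} that the paper cites. Note that once this lemma is invoked, your detour through the Galois closure, and your unproven assertion about the purely inseparable part in positive characteristic (the lemma carries no characteristic hypothesis), both become unnecessary: the norm argument applies verbatim to any finite locally free surjection, Galois or not.
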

\begin{proof} 
	There exists some finitely generated field extension $k_1$ of $k$ over which
	$X$ becomes a quasi-projective scheme. Decomposing the extension $k\subset k_1$ into a tower of 
	extensions, we are reduced to proving the lemma in the two cases:
	\begin{enumerate}
		\item $k \subset L$ is finite.
		\item $L=k(t)$ with $t$ transcendental over $k$.
	\end{enumerate}
	In the first case, the canonical morphism $X_L\to X$ identifies the algebraic space $X$ with the quotient of $X_L$ by a 
	finite locally free equivalence relation. As  explained in \cite[Rem.~V.5.1]{SGA3}, the result then
	follows from \cite[Thm.~V.4.1]{SGA3}. In the second case, there exist a  dense open subscheme $U\subset \mathbb A^{1}_k$ and a quasi-projective scheme $\mathcal X$ over $U$ such that $\mathcal X$ is generically isomorphic to $X_L$. The $U$-algebraic spaces $X_U=X\times_k U$ and $\mathcal X$ are generically isomorphic. Replacing $U$ by a dense open if necessary, by spreading out \cite[Prop.~4.18]{LMB}, we see that $X_U$ is $U$-isomorphic to $\mathcal X$. Hence on choosing some closed point $u \in U$ with residue field $K$, we see that $X_K$ is isomorphic to the quasi-projective $K$-scheme $\mathcal X_u$. We have therefore reduced to case $(1)$, which proves the result.
\end{proof}

We will require results of Griffiths and Zuo on period maps. We gather these in the next lemma.

\begin{lemma}\label{lem:analytic}
Let $f:X\to U$ be a polarised family of smooth projective connected varieties over $\CC$, where $U$ is a smooth quasi-projective variety. If $i\geq 0$ is an integer such that the period map associated to $R^i_{\prim} f_\ast \ZZ$ is an immersion of complex analytic spaces, then  every subvariety of $U$ is of log-general type. Moreover $U^{\an}$ is Brody hyperbolic, i.e.~every holomorphic map $\CC\to U^{\an}$ is constant.
\end{lemma}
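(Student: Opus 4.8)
The plan is to deduce both conclusions from the immersion property of the period map by invoking Zuo's theorem on the negativity of certain Higgs subsheaves, together with the standard theory of period maps and log-general type.

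First I would recall the setup precisely. We are given a polarised family $f:X\to U$ over $\CC$ with $U$ smooth quasi-projective, and an integer $i$ such that the period map $p:U^{\an}\to \Gamma\backslash D$ associated to the variation of Hodge structure $R^i_{\prim}f_\ast\ZZ$ is an immersion of complex analytic spaces. Since $p$ is an immersion, it is in particular quasi-finite (injective on tangent spaces and, refining if necessary via Selberg's lemma as in the proof of Theorem~\ref{thm:qf_torelli}, locally injective), so the variation of Hodge structure is ``positive-dimensional'' in the strongest sense along every direction. The key input is a theorem of Zuo \cite{ZuoNeg}, which asserts that for a polarised variation of Hodge structure over a quasi-projective base whose period map is generically immersive (more precisely, whose associated Higgs bundle has generically injective Kodaira--Spencer-type map), the base carries a natural big line bundle deduced from the negativity of the kernel of the Higgs field. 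The plan is to apply this to the restriction of the variation to an arbitrary subvariety $U'\subseteq U$.

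The main steps, in order, are as follows. Let $U'$ be a subvariety of $U$; I would pass to a resolution $V\to (U'_{\mathrm{red}})$ and then to a smooth compactification $V\hookrightarrow X'$ with simple normal crossings boundary $D=X'\setminus V$, as in the definition of log-general type given just before Conjecture~\ref{lang conj}. The pullback of the variation of Hodge structure to $V$ still has immersive period map, since the restriction of an immersion to a locally closed analytic subspace remains an immersion on the smooth locus; hence Zuo's negativity result \cite{ZuoNeg} applies and produces a big line bundle on $X'$ bounded by $K_{X'}+D$, which is exactly the statement that $K_{X'}+D$ is big, i.e.\ that $V$ (hence $U'$) is of log-general type. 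Because $U'$ was an arbitrary subvariety, every subvariety of $U$ is of log-general type, giving the first conclusion. For the second conclusion, Brody hyperbolicity, I would use Griffiths's theorem that a period map is distance-decreasing for the Kobayashi (equivalently, the natural Hodge) metric on the period domain, which has strictly negative holomorphic sectional curvature in the horizontal directions. Any holomorphic map $h:\CC\to U^{\an}$ composed with the immersive period map $p$ would give a horizontal holomorphic map $\CC\to \Gamma\backslash D$; by the Ahlfors--Schwarz lemma applied to the negatively curved horizontal metric, such a map must be constant, and since $p$ is an immersion (hence locally injective) this forces $h$ itself to be constant.

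The hard part will be justifying that the hypotheses of Zuo's theorem are genuinely met by the mere immersivity of the period map, and pinning down the precise form of \cite{ZuoNeg} that yields bigness of $K_{X'}+D$ rather than just some weaker positivity or non-density statement. In particular I would need to verify that immersivity of the period map on the analytic space $U^{\an}$ transfers correctly to the generic injectivity of the Kodaira--Spencer map (equivalently the Higgs field) appearing in Zuo's algebraic hypotheses, and that this persists after restriction to subvarieties and resolution; the behaviour along the boundary divisor $D$, where one must control the logarithmic extension of the Higgs bundle and the limiting mixed Hodge structure, is the technically delicate point. The Brody hyperbolicity statement, by contrast, is comparatively soft once immersivity is in hand, following directly from Griffiths's curvature computations.
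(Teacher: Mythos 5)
Your proposal is correct and follows essentially the same route as the paper: for the first assertion the paper likewise restricts to an arbitrary subvariety, passes to a resolution of singularities, notes that the pulled-back period map is generically immersive, and invokes Zuo's theorem \cite{ZuoNeg} (with Brunebarbe's generalisation noted) to conclude log-general type. For Brody hyperbolicity the paper simply cites Griffiths's rigidity theorem (\cite[Thm.~3.1]{VoisinII}: since $\CC$ is simply connected, any polarisable variation of Hodge structures over $\CC$ is trivial) and then uses immersivity of the period map to force constancy; your curvature/Ahlfors--Schwarz argument is precisely the standard proof of that cited result, so the two arguments coincide in substance.
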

\begin{proof}
If $Z$ is a subvariety of $U$ and $Z^\prime\to Z$ is a resolution of singularities, then the period map on $Z^{\prime,\an}$ 
is generically immersive.  In particular, by Zuo's theorem \cite{ZuoNeg}, the variety $Z^{\prime}$ is of 
log-general type (this result has been further generalised in \cite[Thm.~3.3]{Brunebarbe}). Hence $Z$ is of log-general type. Next, as $\CC$ is simply-connected, a result of Griffiths \cite[Thm.~3.1]{VoisinII} implies that any polarisable variation of Hodge structures over $\CC$ is trivial. In particular, as $U^{\an}$ is a complex manifold which admits an immersive period map,
we see that any holomorphic map $\CC\to U^{\an}$ is constant, so that $U^{\an}$ is Brody hyperbolic, as required.
\end{proof}

We now attach level structure to the moduli stack $\mathcal C_{T}$.
\begin{theorem}\label{thm: finet}
Let $T = (d_1,\ldots,d_c;n)$ be a type.   Assume that one of the following holds.
	\begin{enumerate}
		\item $T$ is of general type.
		\item $3\leq d_1 < d_2 = \cdots =d_c$ and $T \neq (3;1)$.
		\item $T$ has Hodge level $1$, $T\neq (3;1)$ and $T \neq (2,2;n)$.
	\end{enumerate} Then 
there exist 
\begin{enumerate}
\item[$(a)$] a number field $K$ and a dense open subscheme $B \subset \Spec \OO_K$,
\item[$(b)$] a  smooth quasi-projective  scheme $U$ over $B$, and
\item[$(c)$]  a finite \'etale $B$-morphism $U\to \mathcal C_{T,B}$ of stacks \end{enumerate}  such 
that any subvariety of $U_K$ is of log-general type, and the complex  manifold $U_{\CC,\sigma}^{\an}$ is Brody hyperbolic with respect to any embedding $\sigma:K\to \CC$.
\end{theorem}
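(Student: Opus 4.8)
The plan is to construct the finite étale atlas $U$ by combining a naive level structure on $\mathcal{C}_{T,\CC}$ with the faithfulness result of Proposition \ref{prop:faithful}, then spread the resulting scheme out over an arithmetic base and verify the hyperbolicity properties via the period map. The guiding principle, following Popp \cite{PoppI}, is that once the automorphism groups $\Lin X$ inject into $\Aut \mathrm{H}^n(X,\CC)$, one can rigidify the moduli problem by choosing a basis (or a finite marking) of the cohomology lattice modulo a suitable level.

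First I would work over $\CC$. By Proposition \ref{prop:faithful}, under each of the three hypotheses on $T$, the group $\Lin X$ acts faithfully on $\mathrm{H}^n(X,\CC)$, and in fact on the lattice $\mathrm{H}^n(X,\ZZ)$. Using the universal family over the étale atlas $U_{T,\CC}$ supplied by Proposition \ref{prop:moduli_stack}(4), together with Proposition \ref{prop:moduli_stack}(3) which guarantees $\mathcal{C}_{T}$ is Deligne-Mumford over $\ZZ[1/6]$, I would add a level-$k$ structure on the local system $R^n_{\prim} f_* \ZZ/k\ZZ$ for a sufficiently large $k$ (say $k \geq 3$). Faithfulness of the monodromy/automorphism action ensures that for $k$ large enough the resulting moduli problem of complete intersections with level structure is rigid, i.e.\ has trivial automorphisms; concretely this produces a representable finite étale cover $\mathcal{C}_{T}^{[k]} \to \mathcal{C}_{T}$ whose source is an algebraic space. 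The key point is that trivial inertia forces $\mathcal{C}_{T}^{[k]}$ to be an algebraic space rather than a genuine stack. One then shows $\mathcal{C}_{T,\CC}^{[k]}$ is a \emph{scheme}, indeed quasi-projective: this can be deduced from the period map, since by Theorem \ref{thm:qf_torelli} (via the infinitesimal Torelli theorem of Flenner \cite{Fl86}) the period map on the atlas is immersive after passing to a free-quotient cover, and an immersive period map gives an ample line bundle (the Griffiths line bundle), forcing quasi-projectivity.

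Next I would descend and spread out. The stack $\mathcal{C}_{T}$ is defined over $\ZZ$, and the level structure is defined over $\ZZ[1/k]$; by a standard spreading-out argument \cite[\S2.3]{BenoistThesis} there exists a number field $K$ and a dense open $B \subset \Spec \OO_K$ over which the finite étale cover $U \to \mathcal{C}_{T,B}$ exists with $U$ a quasi-projective $B$-scheme. To pass from ``algebraic space over $\CC$'' to ``quasi-projective scheme over a number field'' I would invoke Lemma \ref{lem:quotients}: it suffices to know that a single geometric fibre $U_{\CC}$ is quasi-projective, which was established in the previous step. Finally, for the hyperbolicity assertions, I would apply Lemma \ref{lem:analytic} directly: since the period map associated to $R^n_{\prim} f_* \ZZ$ on $U_{\CC,\sigma}$ is an immersion (again by Flenner's infinitesimal Torelli theorem combined with the étale property of $U \to \mathcal{C}_{T,\CC}$, exactly as in the proof of Theorem \ref{thm:qf_torelli}), Lemma \ref{lem:analytic} yields simultaneously that every subvariety of $U_K$ is of log-general type (via Zuo \cite{ZuoNeg}) and that $U_{\CC,\sigma}^{\an}$ is Brody hyperbolic.

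The main obstacle I anticipate is establishing representability and schematicity of the level cover cleanly, rather than the hyperbolicity conclusions (which follow formally from Lemma \ref{lem:analytic} once immersivity is in hand). The delicate issue is that faithfulness of $\Lin X$ on $\mathrm{H}^n(X,\CC)$ (Proposition \ref{prop:faithful}) only tells us the automorphism group injects into a fixed finite group acting on cohomology, so one must choose the level $k$ and the precise notion of marking so that the rigidified problem genuinely has no automorphisms and so that the cover remains \emph{finite} étale (not merely quasi-finite). The $(2,2;n)$ exclusion for $n$ odd in hypothesis $(3)$ is precisely the case where faithfulness fails (Proposition \ref{prop:two_quadrics}), which is why it must be excluded here. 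Taking $U := \mathcal{C}_{T,B}^{[k]}$ and carefully tracking that quasi-projectivity over one fibre propagates via Lemma \ref{lem:quotients} is where the technical care is concentrated.
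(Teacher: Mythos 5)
Your overall architecture closely parallels the paper's proof: a Popp-style level structure using the faithfulness of $\Lin X$ on integral cohomology (Proposition \ref{prop:faithful}), trivial inertia forcing the cover to be an algebraic space, descent of quasi-projectivity via Lemma \ref{lem:quotients}, spreading out over a dense open $B \subset \Spec \OO_K$, and the hyperbolicity conclusions via Flenner's infinitesimal Torelli theorem combined with Lemma \ref{lem:analytic}. Indeed the paper invokes exactly \cite[Prop.~2.17]{PoppI} for the level-structure step, so that part of your plan is sound.

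However, there is a genuine gap at the step you identify as routine: the quasi-projectivity of the cover over $\CC$. You claim that ``an immersive period map gives an ample line bundle (the Griffiths line bundle), forcing quasi-projectivity.'' This does not work. The Griffiths bundle has positive curvature only along horizontal directions, and positivity of curvature of a line bundle on a non-compact complex manifold (or algebraic space) does not imply ampleness or algebraicity; Kodaira-type arguments require properness, and invoking quasi-projectivity of the period image is essentially Griffiths' conjecture, which was open at the time this paper was written. Worse, the argument is circular: you would be using an ample bundle on $U_\CC$ to prove $U_\CC$ is quasi-projective, but ampleness is only meaningful once you know you are on a (quasi-projective) scheme. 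The paper instead obtains quasi-projectivity from deep \emph{external} inputs: the Keel--Mori coarse moduli space $\mathcal C_{T}^{\mathrm{coarse}}$ (with Rydh's theorem that $\mathcal C_{T} \to \mathcal C_{T}^{\mathrm{coarse}}$ is quasi-finite and proper), Viehweg's quasi-projectivity theorem \cite[Thm.~1.11]{ViehwegBook} in the general type case, and Benoist's theorem \cite[Cor.~1.2]{BenoistCoarse} in cases $(2)$ and $(3)$; one then deduces that $U_K \to \mathcal C_{T,K}^{\mathrm{coarse}}$ is finite onto a quasi-projective scheme, so $U_K$ is a scheme by Knutson's criterion and quasi-projective. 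This is also why hypothesis $(2)$ of the theorem requires $d_2 = \cdots = d_c$, which is strictly stronger than the hypothesis $d_2 \leq \cdots \leq d_c$ of Proposition \ref{prop:faithful}: the constraint comes from the range of validity of Benoist's quasi-projectivity result, a feature your proposal cannot account for because it bypasses that input entirely.
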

\begin{proof}  
Let $N=n+c+1$.
For all smooth complete intersections $X$ of type $T$ over $\CC$, with $T$ as in the statement of the theorem, the canonical map 
 \[ \Lin(X) \to \Aut(\mathrm H^n(X,\ZZ) ) \] is injective by Proposition \ref{prop:faithful} and the group $\Lin(X)$ is finite by Lemma \ref{lem: isomscheme}. 
Therefore,  we can apply \cite[Prop.~2.17]{PoppI} to see that
 there exist a finite \'etale Galois morphism 
$H_\CC \to \mathrm{Hilb}_{T,\CC}$ of schemes, where   $ \mathrm{Hilb}_T$ is
the Hilbert scheme of smooth complete intersections of type $T$, and a $\PGL_{N}$-action on $H_\CC$ such that
\begin{itemize}
\item the morphism $H_\CC\to \mathrm{Hilb}_{T,\CC}$ is $\PGL_{N}$-equivariant, and
\item the action of $\PGL_{N}$ on $H_\CC$ is proper and without fixed points.
\end{itemize}  

By spreading out and  specialising, we see that there exist a number field $K$, a dense open subscheme $B$ of $\Spec \OO_K$, a finite \'etale Galois morphism $H\to \mathrm{Hilb}_{T, B}$, and a $\PGL_N$-action on $H$ such that the morphism $H\to \mathrm{Hilb}_{T, B}$ is $\PGL_N$-equivariant and the action of $\PGL_N$ on $H$ is proper and without fixed points.  

%As fundamental groups of finite type connected schemes over $\CC$ are finitely generated  \cite[Exp.~II, Thm.~2.3.1]{SGA7I},a simple argument shows that the finite \'etale cover $H_ CC\to \mathrm{Hilb}_{T,\CC}$ can be defined over $\Qbar$. We now use ``spreading out'' to see that there exist a number field $K$, a dense open subscheme $B \subset \Spec \OO_K$, a finite \'etale Galois morphism $H\to \mathrm{Hilb}_{T,B}$ of schemes over $B$ and a $\PGL_{N}$-action on $H$ such that the morphism $H\to \mathrm{Hilb}_{T,B}$ is $\PGL_{N}$-equivariant and the action of $\PGL_{N}$ on $H$ is proper and without fixed points. 

Consider the induced finite \'etale morphism
$ U\to \mathcal C_{T,B}$ 
where $U = [\mathrm{PGL}_{N}\backslash H]$ is the quotient stack.  
 As $U$ has trivial stabilisers,  the stack $ U$ is a smooth   algebraic space over $B$ by \cite[Thm.~2.2.5.(1)]{Conrad} (see also \cite[Cor.~8.1.1]{LMB}). 

Replacing $B$ by a dense open if necessary, Proposition \ref{prop:moduli_stack} implies that $\mathcal C_{T,B}$ is separated 
and Deligne-Mumford over $B$. Let $\mathcal C_{T,B}\to \mathcal C_{T,B}^{\mathrm{coarse}}$ be the coarse moduli space. The existence of the algebraic space $C_{T,B}^{\mathrm{coarse}}$ was proved by Keel-Mori \cite{KeelMori}. Note that the morphism $\mathcal C_{T,B}\to \mathcal C_{T,B}^{\mathrm{coarse}}$ is quasi-finite and proper  \cite[Thm.~6.12]{Rydh}. 

We now use results of Viehweg \cite[Thm.~1.11]{ViehwegBook} in case $(1)$ and 
Benoist \cite[Cor.~1.2]{BenoistCoarse} in cases $(2)$ and $(3)$ to deduce that $\mathcal C_{T,\CC}^{\mathrm{coarse}}$  
is a quasi-projective scheme over $\CC$ (that the result of \cite{BenoistCoarse} applies in case $(3)$ follows from the classification given in Section \ref{sec:classification}). Since $K\subset \CC$ is flat, the algebraic spaces $(\mathcal C_{T,K}^{\mathrm{coarse}})_{\CC}$ and $\mathcal C_{T,\CC}^{\mathrm{coarse}}$ are naturally isomorphic. Hence by Lemma \ref{lem:quotients}, the algebraic space $\mathcal C_{T,K}^{\mathrm{coarse}}$ is a quasi-projective scheme over $K$. 
Moreover, since the morphism $U_K\to \mathcal C_{T,K}$ is finite and the morphism $\mathcal C_{T,K}\to \mathcal C_{T,K}^{\mathrm{coarse}}$ is quasi-finite and proper, the composed morphism  $U_K\to \mathcal C_{T,K} \to \mathcal C_{T,K}^{\mathrm{coarse}}$ of algebraic spaces is finite. As finite morphisms are quasi-finite separated and $\mathcal C_{T,K}^{\mathrm{coarse}}$ is a scheme, the algebraic space $U_K$ is 
a  scheme by Knutson's criterion \cite[Cor.~II.6.16]{Knutson}. Furthermore, as $U_K\to \mathcal C_{T,K}^{\mathrm{coarse}}$ is a finite morphism of schemes and $\mathcal C_{T,K}^{\mathrm{coarse}}$ is quasi-projective, we can use spreading out \cite[Prop.~4.18]{LMB} to see that $U$ is a smooth finite type  algebraic space over $B$ which is generically quasi-projective. Replacing $B$ by a dense open if necessary, we conclude that $U$ is a  smooth quasi-projective scheme over $B$. 
 
To finish the proof, fix an embedding $K\to \CC$ and consider $U_\CC$. It suffices to show that 
all subvarieties $Z$ of $U_\CC$   are of log-general type, and 
that the complex manifold $U_\CC^{\an}$ is Brody hyperbolic. 
  By Flenner's infinitesimal Torelli theorem \cite[Thm.~3.1]{Fl86} (as used already in Sections \ref{Sec:Torelli} and \ref{sec:automorphisms_faithful}), the period map  of  the polarised variation of Hodge structures on $U_\CC^{\an}$ associated to 
the pull-back $f:X\to U_{\CC}$ to $U_\CC$ of the universal family over $\mathcal C_{T,\CC}$  is an immersion of complex analytic spaces.
 Therefore, the result follows from Lemma \ref{lem:analytic}.
\end{proof}

\begin{remark}
If $T$ is of general type, then to deduce that every subvariety of $U_K$ is of log-general type in the proof of Theorem \ref{thm: finet} one can also appeal to the theorem of  Campana-P{\u{a}}un (\emph{quondam} Viehweg's conjecture) \cite[Cor.~4.6]{CP}.
\end{remark}

\subsection{Lang--Vojta implies Shafarevich}
The aim of this section is to prove Theorem~\ref{thm: lang implies shaf}, via the following
stronger result for arithmetic schemes.

\begin{theorem}\label{thm: lis}
Let $T=(d_1,\ldots,d_c;n)$ be a type and let $B$ be an arithmetic scheme with function field $K$. 
  Assume that one of the following holds.
	\begin{enumerate}
		\item $T$ is of general type.
		\item $3\leq d_1 < d_2 = \cdots =d_c$ and $T \neq (3;1)$.
	\end{enumerate}
Assume Conjecture \ref{lang conj}. Then the set of $B$-linear isomorphism classes of smooth complete intersections of type $T$ over $B$ is finite. 
\end{theorem}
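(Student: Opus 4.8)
The plan is to bound the relevant $B$-points of the moduli stack $\mathcal{C}_{T,B}$ by running a Chevalley--Weil descent against the finite \'etale atlas of Theorem \ref{thm: finet}, and then invoking the Lang--Vojta conjecture. First I would reduce to a statement about the stack. A smooth complete intersection of type $T$ over $B$ defines an object of the groupoid $\mathcal{C}_{T,B}(B)$, and two such objects are $B$-linearly isomorphic exactly when they are isomorphic in this groupoid; hence the set of $B$-linear isomorphism classes injects into $[\mathcal C_T(B)]$, and it suffices to show this image is finite. Since $T \neq (2;n)$, Lemma \ref{lem: un} shows that restriction along a dense open immersion is injective on $B$-linear isomorphism classes, so I may shrink $B$ freely whenever convenient.

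The second step is to produce a finite \'etale atlas over $B$ itself. Theorem \ref{thm: finet} supplies a number field $K_0$, a dense open $B_0 \subseteq \Spec \OO_{K_0}$, a smooth quasi-projective $B_0$-scheme $U$, and a finite \'etale morphism $U \to \mathcal{C}_{T,B_0}$ all of whose $U_{K_0}$-subvarieties are of log-general type. Since $K$ need not be a number field, this atlas does not live over $B$, so I would base change it: choose a compositum $\tilde K$ of $K$ and $K_0$, let $\tilde B$ be the normalisation of $B$ in $\tilde K$, and, after shrinking $B$, arrange that $\tilde B \to B$ is finite \'etale and that $\tilde B$ maps to $B_0$. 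Pulling back gives $V := U \times_{B_0} \tilde B \to \mathcal{C}_{T,\tilde B}$, and composing with the finite \'etale morphism $\mathcal{C}_{T,\tilde B} \to \mathcal{C}_{T,B}$ (base change of $\tilde B \to B$) yields a finite \'etale $V \to \mathcal{C}_{T,B}$ with $V$ smooth quasi-projective over $B$. Because $\tilde K \supseteq K_0$ and log-general type is a geometric property stable under extension of the base field, every subvariety of $V_K$ is again of log-general type.

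The final step is the descent. Given a complete intersection $X$ over $B$ with class $x \in \mathcal{C}_{T,B}(B)$, the fibre product $E_X := B \times_{x,\mathcal{C}_{T,B}} V$ is finite \'etale over $B$ of degree that of the atlas, hence of bounded degree and unramified in codimension one; its connected components correspond to finite extensions $L_i/K$ unramified over every $\OO_{B,v}$ with $v$ of codimension one. By Hermite--Minkowski for arithmetic schemes (Theorem \ref{thm:HM}) there are only finitely many possibilities for $E_X$ up to $B$-isomorphism. For each such cover, $x$ lifts tautologically to a point of $V$ over each component $B_i$ of $E_X$; since $V_{B_i}$ is smooth quasi-projective over the arithmetic scheme $B_i$ with all subvarieties of its generic fibre of log-general type, the Lang--Vojta conjecture (Conjecture \ref{lang conj}) shows $V(B_i)$ is finite. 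The passage $x \mapsto (E_X, \text{lift})$ has finite fibres, the ambiguity being controlled by descent through the finite automorphism group $\Lin X$ (finite by Lemma \ref{lem: isomscheme}), so finiteness of the data forces finiteness of the set of such $x$, and hence of the $B$-linear isomorphism classes.

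The main obstacle I expect lies in the second step. The atlas of Theorem \ref{thm: finet} is only available over a number field $K_0$, whereas $B$ has an arbitrary finitely generated function field, so one must base change the atlas to $B$ while simultaneously preserving smooth quasi-projectivity, preserving the log-general type of \emph{all} subvarieties of the geometric generic fibre, and keeping the resulting morphism to $\mathcal{C}_{T,B}$ finite \'etale so that Hermite--Minkowski can bound the Chevalley--Weil covers. Checking that log-general type descends through the relevant base field extension, and that the \'etale-cover bookkeeping over a higher-dimensional arithmetic base behaves exactly as in the classical Chevalley--Weil theorem, is where the genuine care is required; the Lang--Vojta conjecture then enters only as the final finiteness input on $V(B_i)$.
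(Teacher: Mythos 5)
Your proposal is correct and takes essentially the same route as the paper: a Chevalley--Weil descent against the finite \'etale atlas of Theorem \ref{thm: finet}, with Hermite--Minkowski (Theorem \ref{thm:HM}) bounding the possible covers, Conjecture \ref{lang conj} giving finiteness of the lifted points, and finiteness of twists controlling the fibres of the descent (the paper packages this last step via Lemma \ref{lem: un} and Theorem \ref{thm: twists}, where you argue via descent data and the finiteness of $\Lin X$; likewise, the paper replaces $B$ by the finite cover carrying the atlas, while you equivalently compose the pulled-back atlas with $\mathcal{C}_{T,\tilde B}\to\mathcal{C}_{T,B}$). The one point you rightly flag --- that ``every subvariety is of log-general type'' must persist from $U_{K_0}$ to the generic fibre over the larger function field --- is passed over silently in the paper as well, and is settled by noting that the complexification of the atlas is unchanged under this base change, so the immersive period map and Zuo's theorem (Lemma \ref{lem:analytic}) still yield the property for all subvarieties.
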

\begin{proof} 
By the uniqueness of good models (Lemma~\ref{lem: un}), to prove the theorem we are free to replace $B$ 
by any non-empty open subset. Moreover, given any finite map of arithmetic schemes $B' \to B$ and a smooth
complete intersection $X$ of type $T$ over $B'$,
Lemma~\ref{lem: un} and Theorem \ref{thm: twists}  imply that there are only finitely many $B$-linear isomorphism classes of
smooth complete intersections of type $T$ over $B$ which become $B'$-linearly isomorphic
to $X$ over $B'$. Hence, on replacing $B$ by a finite cover if necessary,
by Theorem \ref{thm: finet} we may assume that there exists a finite \'etale atlas $U\to \mathcal C_{T,B}$, where $U \to B$ is a smooth quasi-projective $B$-scheme whose generic fibre has the property that all its subvarieties are of log-general type.

We now apply a descent argument in the spirit of the proof of the theorem of Chevalley-Weil \cite[\S4.2]{Ser97}. By pull-back, any $B$-point of $\mathcal C_{T,B}$ induces a $\widetilde B$-point of $U$, where $\widetilde B\to B$ is some finite \'etale morphism whose degree is at most the  degree of $U\to \mathcal C_{T,B}$.
As $\widetilde B\to B$ is finite \'etale, the connected components of the scheme $\widetilde B$ are arithmetic schemes. Therefore, the set $U(\widetilde B)$ is finite  by Conjecture \ref{lang conj}. Moreover, the set of $B$-isomorphism classes of finite \'etale morphisms $\widetilde B\to B$ of bounded degree is finite by Hermite-Minkowski (Theorem \ref{thm:HM}). Hence the set $[\mathcal C_T(B)]$ is finite. In particular, there are only finitely many $B$-linear isomorphism classes of smooth complete intersections of type $T$ over $B$, as required.
\end{proof}

\begin{proof}[Proof of Theorem \ref{thm: lang implies shaf}]
Let $T$ be a type which is either of general type or is a hypersurface. 
Let $K$ be a number field and let $B \subset \Spec \OO_K$ be a dense open subset.
By Theorem \ref{theorem: main theorem},
we may assume that $T$ has Hodge level at least $2$. Moreover, replacing $B$ by a dense open if necessary,
we may  assume that $\Pic(B) =0$. In particular, by Lemma \ref{lem:glueing}, any smooth complete intersection of type $T$ over $K$ with good reduction over $B$ has a good model over $B$.  However, assuming the Lang--Vojta conjecture, Theorem \ref{thm: lis} implies that the set of $B$-linear isomorphism classes of such models is finite, which proves the result.
\end{proof}
%\bibliography{refsci}{}
%\bibliographystyle{plain}

\def\cprime{$'$}

\end{document}